\documentclass{amsart}

\usepackage{amsmath, amssymb, amsthm, bbm}

\newtheorem{theorem}{Theorem}
\newtheorem{lemma}[theorem]{Lemma}
\newtheorem{proposition}[theorem]{Proposition}
\newtheorem{corollary}[theorem]{Corollary}
\newtheorem{claim}[theorem]{Claim}

\newtheorem{fact}[theorem]{Fact}
\newtheorem{question}[theorem]{Question}

\newenvironment{definition}[1][Definition]{\begin{trivlist}
\item[\hskip \labelsep {\bfseries #1}]}{\end{trivlist}}

\newenvironment{remark}[1][Remark]{\begin{trivlist}
\item[\hskip \labelsep {\bfseries #1}]}{\end{trivlist}}

\newcommand{\cf}{\mathrm{cf}}
\newcommand{\dom}{\mathrm{dom}}
\newcommand{\bb}{\mathbb}
\newcommand{\bbm}{\mathbbm}

\begin{document}

\title{Bounded Stationary Reflection II}
\author{Chris Lambie-Hanson}
\address{Einstein Institute of Mathematics, Hebrew University of Jerusalem \\ Jerusalem, 91904, Israel}
\email{clambiehanson@math.huji.ac.il}
\thanks{This research was undertaken while the author was a Lady Davis Postdoctoral Fellow. The author would like to thank the Lady Davis Fellowship Trust and the Hebrew University of Jerusalem. The author would also like to thank Menachem Magidor for many helpful discussions.}

\begin{abstract}
	Bounded stationary reflection at a cardinal $\lambda$ is the assertion that every stationary subset of $\lambda$ reflects but there is a stationary subset of $\lambda$ that does not reflect at arbitrarily high cofinalities. We produce a variety of models in which bounded stationary reflection holds. These include models in which bounded stationary reflection holds at the successor of every singular cardinal $\mu > \aleph_\omega$ and models in which bounded stationary reflection holds at $\mu^+$ but the approachability property fails at $\mu$.
\end{abstract}

\maketitle

\section{Introduction}

The reflection of stationary sets is a topic of fundamental interest in the study of combinatorial set theory, large cardinals, and inner model theory and provides a useful tool for the investigation of the tension between compactness and incompactness phenomena. In this paper, we extend results, inspired by a question of Eisworth, of Cummings and the author \cite{reflection}. We start by reviewing the relevant definitions and providing an outline of the structure of the paper.

Our notation is for the most part standard. The primary reference for all undefined notions and notations is \cite{jech}. If $\kappa < \lambda$ are infinite cardinals, with $\kappa$ regular, then $S^\lambda_\kappa = \{\alpha < \lambda \mid \cf(\alpha) = \kappa \}$. Expressions such as $S^\lambda_{>\kappa}$ or $S^\lambda_{\geq \kappa}$ are defined in the obvious way. If $X$ is a set of ordinals, then $\mathrm{nacc}(X)$ (the set of non-accumulation points of $X$) is the set $\{\alpha \in X \mid \sup(X \cap \alpha) < \alpha\}$, $X'$ is the set of limit points of $X$ (i.e. $X \setminus \mathrm{nacc}(X)$), and $\mathrm{otp}(X)$ is the order type of $X$. If $\langle \bb{P}_\xi, \dot{\bb{Q}}_\zeta \mid \xi \leq \gamma, \zeta < \gamma \rangle$ is a forcing iteration with supports of size $\mu$ for some cardinal $\mu$, we will frequently write $\Vdash_\xi$ instead of $\Vdash_{\bb{P}_\xi}$. Conditions of $\bb{P}_\gamma$ are thought of as functions $p$ such that $\dom(p) \in [\gamma]^{\leq \mu}$ and, for all $\zeta \in \dom(p)$, $\Vdash_{\zeta}``p(\zeta) \in \dot{\bb{Q}}_\zeta."$ For $\zeta < \xi \leq \gamma$, we let $\dot{\bb{P}}_{\zeta, \xi}$ be a $\bb{P}_\zeta$-name such that $\bb{P}_\xi \cong \bb{P}_\zeta * \dot{\bb{P}}_{\zeta, \xi}$.

\begin{definition}
	Let $\lambda > \omega_1$ be a regular cardinal.
	\begin{enumerate}
		\item{If $S \subseteq \lambda$ is a stationary set and $\alpha \in S^{\lambda}_{>\omega}$, then \emph{$S$ reflects at $\alpha$} if $S \cap \alpha$ is stationary in $\alpha$. $S$ \emph{reflects} if there is $\alpha \in S^\lambda_{>\omega}$ such that $S$ reflects at $\alpha$.}
		\item{If $\mu$ is a singular cardinal and $\lambda = \mu^+$, then $\mathrm{Refl}(\lambda)$ holds if every stationary subset of $\lambda$ reflects.}
		\item{If $\mu$ is a singular cardinal, $\lambda = \mu^+$, and $S \subseteq \lambda$ is stationary, then $S$ \emph{reflects at arbitrarily high cofinalities} if, for all $\kappa < \mu$, there is $\alpha \in S^\lambda_{\geq \kappa}$ such that $S$ reflects at $\alpha$.}
		\item{If $\mu$ is a singular cardinal and $\lambda = \mu^+$, then $\mathrm{bRefl}(\lambda)$ (bounded stationary reflection at $\lambda$) holds if $\mathrm{Refl}(\lambda)$ holds but there is a stationary $T \subseteq \lambda$ that does not reflect at arbitrarily high cofinalities.}
	\end{enumerate}
\end{definition}

Eisworth \cite{ToddEmail} asked whether $\mathrm{bRefl}(\lambda)$ is consistent when $\lambda$ is the successor of a singular cardinal. $\mathrm{bRefl}(\aleph_{\omega+1})$ is easily seen to be inconsistent, but Cummings and the author showed in \cite{reflection} that, for other values of $\lambda$, $\mathrm{bRefl}(\lambda)$ is consistent modulo large cardinal assumptions. In particular, the following theorem was proven.

\begin{theorem}
	Suppose there is a proper class of supercompact cardinals. Then there is a class forcing extension in which, for every singular cardinal $\mu > \aleph_\omega$ such that $\mu$ is not a cardinal fixed point, $\mathrm{bRefl}(\mu^+)$ holds.
\end{theorem}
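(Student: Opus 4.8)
The plan is to produce a single class forcing extension by iterating local forcings that each install $\mathrm{bRefl}(\mu^+)$ for the relevant singular $\mu$, using the proper class of supercompacts to control cardinal arithmetic and to resurrect stationary reflection after each stage. The central requirement is twofold: at each singular $\mu > \aleph_\omega$ that is not a cardinal fixed point, I must force $\mathrm{Refl}(\mu^+)$ (every stationary subset of $\mu^+$ reflects) while simultaneously arranging a stationary $T \subseteq \mu^+$ that reflects only at bounded cofinalities. I would first fix, for each such $\mu$, a supercompact $\kappa_\mu < \mu$ witnessing enough compactness below $\mu$, and Laver-prepare so that the supercompactness of each $\kappa_\mu$ is indestructible under the directed-closed forcings I intend to use. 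The global iteration is then an Easton-support (or appropriately supported) product/iteration of the local posets $\bb{P}^\mu$, one for each singular $\mu$ in the target class, spaced out along cardinals so that the tail forcing above stage $\mu$ is sufficiently closed to avoid adding new stationary subsets of $\mu^+$ and the head forcing below $\mu$ is small enough to preserve the reflection arranged at $\mu^+$.

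For the local component $\bb{P}^\mu$, I would follow the strategy of \cite{reflection}: build a two-step forcing in which the first step adds a non-reflecting-at-high-cofinalities stationary set $T$ (the witness for the ``bounded'' clause), typically by a coherent sequence or a $\mu$-directed-closed poset shooting a club through the complement of $T$ relative to all $S^{\mu^+}_{\geq\kappa}$ for unboundedly many $\kappa < \mu$, and the second step forces $\mathrm{Refl}(\mu^+)$ by an iteration that shoots clubs through the complements of all nonreflecting stationary sets other than those trivially protected by $T$. The key point is that $\mathrm{Refl}(\mu^+)$ should be made to follow from the stationary reflection compactness that descends from the supercompact $\kappa_\mu$ after a suitable preparatory collapse, so that in the final model every stationary $S \subseteq \mu^+$ reflects, yet the specific $T$ fails to reflect at cofinalities approaching $\mu$. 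Since $\mu$ is not a cardinal fixed point, there is room between the relevant supercompact and $\mu$ to perform collapses cleanly; this hypothesis is exactly what rules out the inconsistent case $\aleph_{\omega+1}$ and its analogues.

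I would then verify genericity and preservation of the global construction level by level. The main structural lemma is a factoring argument: above each $\mu$ the iteration $\dot{\bb{P}}_{\mu,\infty}$ is forced to be $\mu^+$-directed-closed (or at least $\mu^{++}$-strategically closed), so it adds no new subsets of $\mu^+$ and hence preserves both $\mathrm{Refl}(\mu^+)$ and the nonreflection of $T$ established by the initial segment; below $\mu$ the head forcing is small enough ($\mu$-cc or of size $<\mu$ after collapses) that it cannot destroy the stationary sets or the club-shooting witnesses. I would use the standard reflection-of-names and chain-condition arguments to check that no stage injures an earlier stage, and an Easton-style lemma to confirm that cardinals and cofinalities are preserved exactly where intended so that each $\mu$ remains singular and each $\mu^+$ remains its successor.

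The hard part will be the simultaneous management of two opposing demands across the class: I must \emph{create} incompactness (the nonreflecting-at-high-cofinalities set $T$) at each $\mu^+$ while \emph{preserving} the global compactness needed to force $\mathrm{Refl}(\mu^+)$ at every stage, and do so for a proper class of $\mu$ without the tail forcing at one cardinal retroactively reviving or destroying reflection configurations at another. Concretely, the delicate estimate is showing that the local club-shooting iteration that secures $\mathrm{Refl}(\mu^+)$ does not accidentally kill the stationarity of $T$ (so $T$ must be insulated, e.g. by only shooting clubs through complements of sets disjoint from $T$ on the critical cofinalities), and conversely that adding $T$ does not obstruct the subsequent full reflection of all \emph{other} stationary sets. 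Getting the closure and chain-condition bookkeeping to line up so that these two steps commute with the global product, uniformly in $\mu$, is where the real work lies; the large cardinal indestructibility and the ``not a cardinal fixed point'' hypothesis are the tools that make the spacing and collapses go through.
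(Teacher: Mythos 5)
Your proposal has the right coarse skeleton (a class iteration with local gadgets at each singular $\mu$, supercompacts supplying compactness, closure/smallness factoring for preservation), but the engine you propose for $\mathrm{Refl}(\mu^+)$ is not the one that works, and it is not the one the construction uses. You suggest forcing reflection by ``an iteration that shoots clubs through the complements of all nonreflecting stationary sets.'' This catch-your-tail scheme has no reason to terminate (the iteration keeps adding new nonreflecting stationary sets), and there is no known way to make it succeed at the successor of a singular. What the actual construction does is interleave L\'{e}vy collapses $\mathrm{Coll}(\kappa_i, <\kappa_{i+1})$ of a class of supercompacts, so that in the end $\kappa_i = \aleph_i$ and $\mu^+ = \kappa_{i+1}$, and then proves $\mathrm{Refl}(\mu^+)$ by lifting a supercompactness embedding $j$ from an intermediate model, absorbing the intermediate forcing into $j(\mathrm{Coll})$ via Fact \ref{lift} and building a master condition coordinate by coordinate. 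Club-shooting posets $CU(\dot{T}_\zeta)$ do appear, but with a completely different job than the one you assign them: they destroy, in advance, exactly those subsets of $S^{\mu^+}_{>\aleph_\omega}$ that are \emph{forced to be non-stationary by the relevant quotient posets} $\bb{V}_{k,i}$, so that any $T$ which is stationary in the final model remains stationary after forcing with some $\bb{V}_{k,i}$. That preservation is precisely what makes the lifting argument possible when $T$ concentrates on cofinalities above $\aleph_\omega$ --- the case your proposal never engages with, and the case where all the difficulty lives (the stationary-set posets are only $\aleph_{\omega+1}$-directed closed, so when $\mathrm{crit}(j) > \aleph_{\omega+1}$ a naive master condition is unavailable and one needs the special lower-bound argument via approachability of the suprema, Claim \ref{lowerboundclaim}).

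A second genuine gap: approachability is entirely absent from your outline, yet the paper states that these proofs ``rely heavily'' on it. The construction forces $AP_\mu$ (the posets $\bb{A}_{\vec{a}}$), because Fact \ref{apstat} ($AP$ plus $\mu$-closed forcing preserves stationary subsets of $S^{\mu^+}_{<\mu}$) is the tool used both to keep the nonreflecting witness stationary through the tail and auxiliary forcings, and to derive the final contradiction in the reflection argument (pulling non-stationarity of $T$ back through the closed quotient $\bb{R}$). Your substitute --- that the tail is ``sufficiently closed to avoid adding new stationary subsets'' --- misidentifies the danger: closed forcing can \emph{destroy} stationarity of sets concentrating on small cofinalities when $AP$ fails, which is exactly what must be ruled out. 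Finally, your reading of the ``not a cardinal fixed point'' hypothesis is wrong: $\aleph_{\omega+1}$ is excluded by $\mu > \aleph_\omega$, not by the fixed-point clause. That clause is a technical artifact of the lifting argument in \cite{reflection} (one needs some $i^* < i$ with $\kappa_{i^*} > i$, so that $j$ fixes the coordinate structure of the iteration up to $i+1$); removing it is the whole point of Section \ref{globalsect} of the present paper.
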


This left open the question of whether it is consistent that $\mathrm{bRefl}(\mu^+)$ holds for every singular cardinal $\mu > \aleph_\omega$. In this paper, we answer this question affirmatively and prove a number of variations on the main result from \cite{reflection}.

In Section \ref{forcingsect}, we briefly discuss the notion of approachability before defining some of the forcing posets to be used throughout the paper and introducing their basic properties. In Section \ref{destroySection}, we prove a general lemma about iteratively destroying stationary sets. In Section \ref{densesect}, we prove a dense version of the result from \cite{reflection} by producing a model in which $\mathrm{Refl}(\aleph_{\omega \cdot 2 + 1})$ holds and, for every stationary $S \subseteq S^{\aleph_{\omega \cdot 2 + 1}}_{<\aleph_\omega}$, there is a stationary $T \subseteq S$ that does not reflect at arbitrarily high cofinalities. In Section \ref{globalsect}, we prove a global version of the result from \cite{reflection} by producing a model in which, for every singular cardinal $\mu > \aleph_\omega$, $\mathrm{bRefl}(\mu^+)$ holds.

The proofs of the results in Sections \ref{densesect} and \ref{globalsect} and in \cite{reflection} rely heavily on the approachability property holding in the final model. The relationship between approachability and stationary reflection is complicated and interesting, and in the last two sections of this paper we investigate the extent to which we can get bounded stationary reflection together with the failure of approachability. In Section \ref{approachSect1}, we produce a model with a singular cardinal $\mu$ such that $AP_\mu$ fails and $\mathrm{bRefl}(\mu^+)$ holds. In this model $\mu$ is a limit of cardinals which are supercompact in an outer model. In Section \ref{approachSect2}, we show that this result can be attained with $\mu = \aleph_{\omega^2 \cdot 2}$.

\section{Approachability and forcing preliminaries} \label{forcingsect}

\begin{definition}
	Let $\lambda$ be a regular, uncountable cardinal.
	\begin{enumerate}
		\item{Let $\vec{a} = \langle a_\alpha \mid \alpha < \lambda \rangle$ be a sequence of bounded subsets of $\lambda$. If $\gamma < \lambda$, $\gamma$ is \emph{approachable with respect to $\vec{a}$} if there is an unbounded $A \subseteq \gamma$ such that $\mathrm{otp}(A) = \cf(\gamma)$ and, for every $\beta < \gamma$, there is $\alpha < \gamma$ such that $A \cap \beta = a_\gamma$.}
		\item{If $B \subseteq \lambda$, then $B \in I[\lambda]$ if there is a club $C \subseteq \lambda$ and a sequence $\vec{a} = \langle a_\alpha \mid \alpha < \lambda \rangle$ of bounded subsets of $\lambda$ such that every element of $B \cap C$ is approachable with respect to $\vec{a}$.}
		\item{If $\mu$ is a singular cardinal and $\lambda = \mu^+$, then $AP_\mu$ holds if $\lambda \in I[\lambda]$.}
	\end{enumerate}
\end{definition}

A wealth of information about approachability, including proofs of the statements in the following remark, can be found in \cite{eisworth}.

\begin{remark}
	Let $\lambda$ be a regular, uncountable cardinal.
	\begin{enumerate}
		\item{$I[\lambda]$ is a normal, $\lambda$-complete ideal extending the non-stationary ideal on $\lambda$.}
		\item{Suppose $\lambda^{<\lambda} = \lambda$ and $\vec{a} = \langle a_\alpha \mid \alpha < \lambda \rangle$ is a fixed enumeration of all bounded subsets of $\lambda$. If $B \subseteq \lambda$, then $B \in I[\lambda]$ iff there is a club $C \subseteq \lambda$ such that every element of $B \cap C$ is approachable with respect to $\vec{a}$.}
	\end{enumerate}
\end{remark}

\begin{definition}
	Let $\theta$ be a regular cardinal, and let $\vartriangleleft$ be a well-ordering of $H(\theta)$. An \emph{internally approachable chain of substructures of $H(\theta)$} is an increasing, continuous sequence $\langle M_i \mid i < \eta \rangle$ such that, for all $i < \eta$:
	\begin{itemize}
		\item{$M_i \prec (H(\theta), \in, \vartriangleleft)$.}
		\item{$\langle M_k \mid k \leq i \rangle \in M_{i+1}$.}
	\end{itemize}
\end{definition}

The notion of approachability is intimately connected with internally approachable chains. The following result is proven in \cite{fm}.

\begin{lemma} \label{ialemma}
	Let $\lambda < \theta$ be regular cardinals, let $x \in H(\theta)$, and let $\vartriangleleft$ be a well-ordering of $H(\theta)$. Suppose $S \in I[\lambda]$. Then there is a club $C \subseteq \lambda$ such that, for every $\gamma \in C \cap S$, letting $\kappa = \cf(\gamma)$, there is an internally approachable chain $\langle M_i \mid i < \kappa \rangle$ of elementary substructures of $(H(\theta), \in, \vartriangleleft)$ such that:
	\begin{enumerate}
		\item{For all $i < \kappa$, $|M_i| < \kappa$.}
		\item{$x \in M_0$.}
		\item{If $M = \bigcup_{i < \kappa} M_i$, then $\gamma = \sup(M \cap \lambda)$.}
	\end{enumerate}
\end{lemma}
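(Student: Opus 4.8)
The plan is to derive the internally approachable chains from a witness to $S \in I[\lambda]$ together with a single long elementary chain used only to produce the club $C$. First I would fix a club $D \subseteq \lambda$ and a sequence $\vec{a} = \langle a_\alpha \mid \alpha < \lambda \rangle$ of bounded subsets of $\lambda$ witnessing $S \in I[\lambda]$, so that every $\gamma \in S \cap D$ is approachable with respect to $\vec{a}$. Next I would build a continuous, increasing chain $\langle N_i \mid i < \lambda \rangle$ of elementary substructures of $(H(\theta), \in, \vartriangleleft)$ with $x, \vec{a}, D \in N_0$, with $|N_i| < \lambda$, and with $i \subseteq N_i$ for all $i$. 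A standard argument shows that $C_0 = \{\gamma < \lambda \mid N_\gamma \cap \lambda = \gamma\}$ is a club, and I would take the club promised by the lemma to be $C = C_0 \cap D$.

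Now fix $\gamma \in C \cap S$ and let $\kappa = \cf(\gamma)$. Since $\gamma \in D \cap S$, fix an unbounded $A \subseteq \gamma$ with $\mathrm{otp}(A) = \kappa$ witnessing approachability, and let $\langle \gamma_\xi \mid \xi < \kappa \rangle$ enumerate $A$ increasingly. For each $\xi < \kappa$ the proper initial segment $A \cap \gamma_\xi$ equals $a_{\alpha(\xi)}$ for some $\alpha(\xi) < \gamma$; since $N_\gamma \cap \lambda = \gamma$ we have $\alpha(\xi) \in N_\gamma$, and as $\vec{a} \in N_\gamma$ this gives $A \cap \gamma_\xi \in N_\gamma$. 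I would then define the small models by recursion, setting $M_\xi = \mathrm{Sk}^{(H(\theta), \in, \vartriangleleft)}(\{x\} \cup (A \cap \gamma_\xi) \cup \{\langle M_\zeta \mid \zeta \le \eta \rangle \mid \eta < \xi\})$. Each generating set has size at most $|\xi| + \aleph_0 < \kappa$ (taking $\kappa$ uncountable, the only case of interest), so condition (1) holds; condition (2) and continuity are immediate; and internal approachability is built in, since $\langle M_\zeta \mid \zeta \le \xi \rangle$ is literally a generator of $M_{\xi+1}$.

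It remains to verify condition (3), that $\gamma = \sup(M \cap \lambda)$ where $M = \bigcup_{\xi < \kappa} M_\xi$. Since $\gamma_\xi \in M_{\xi+1}$ for every $\xi$ we have $A \subseteq M$, giving $\sup(M \cap \lambda) \ge \gamma$, so the content is the reverse inequality, which I would obtain by showing $M \subseteq N_\gamma$. The key claim is that there is a single formula $\varphi$ such that, for every $\zeta < \kappa$, $M_\zeta$ is the unique solution of $\varphi(v, x, A \cap \gamma_\zeta)$ in $(H(\theta), \in, \vartriangleleft)$; this follows by induction on $\zeta$, the point being that from the well-ordered set $A \cap \gamma_\zeta$ one recovers all of its initial segments $A \cap \gamma_\eta$ (for $\eta < \zeta$), and hence the entire approximating sequence, so the recursion defining $M_\zeta$ refers only to $x$ and $A \cap \gamma_\zeta$. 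Granting this, since $x, A \cap \gamma_\zeta \in N_\gamma$ and $N_\gamma \prec (H(\theta), \in, \vartriangleleft)$, elementarity yields $M_\zeta \in N_\gamma$, and likewise each subchain $\langle M_\zeta \mid \zeta \le \eta \rangle \in N_\gamma$. Thus every generator of every $M_\xi$ lies in $N_\gamma$, and since $N_\gamma$ is closed under the Skolem functions of $(H(\theta), \in, \vartriangleleft)$ it follows that $M_\xi \subseteq N_\gamma$, whence $M \cap \lambda \subseteq N_\gamma \cap \lambda = \gamma$. The main obstacle is exactly this definability and absoluteness step: one must package the recursive construction of the $M_\zeta$ into a single formula with parameters drawn from $N_\gamma$, so that approachability --- which guarantees the initial segments $A \cap \gamma_\zeta$ are coded below $\gamma$ and therefore belong to $N_\gamma$ --- can be leveraged to trap the whole chain inside $N_\gamma$.
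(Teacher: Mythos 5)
You should first note that the paper itself gives no proof of this lemma --- it is quoted from Foreman--Magidor (\cite{fm}) --- so your proposal can only be measured against the standard argument; its architecture (a club of $\gamma$ with $N_\gamma \cap \lambda = \gamma$, an approachability witness $A$ all of whose initial segments are elements of $N_\gamma$, a Skolem-hull recursion with the subchains thrown in as generators, and an absoluteness step trapping the chain inside $N_\gamma$) is indeed the right one. The gap is in the step you yourself flag as the crux. You need a single formula $\varphi$ such that each $M_\zeta$ is the unique $v$ with $(H(\theta),\in,\vartriangleleft) \models \varphi(v,x,A\cap\gamma_\zeta)$, and you argue this holds because the recursion ``refers only to $x$ and $A\cap\gamma_\zeta$.'' But the recursion also refers, at every step, to the Skolem-hull operation of $(H(\theta),\in,\vartriangleleft)$, and that operation is \emph{not} definable over $(H(\theta),\in,\vartriangleleft)$: defining it requires the satisfaction predicate of the structure (equivalently, definability of ``$v$ is an elementary substructure of $H(\theta)$''), and if either were definable one could define the truth of $(H(\theta),\in,\vartriangleleft)$ over itself --- given a formula and parameters, pass to a countable $M \prec H(\theta)$ containing the parameters and evaluate inside $M$ using the definable satisfaction relation for set-sized structures --- contradicting Tarski's theorem. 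The problem sits exactly where the content of the lemma is, not at a technicality: if you drop the subchains $\langle M_\eta \mid \eta \le \zeta\rangle$ from the generators, then $M_\xi \subseteq N_\gamma$ is immediate (every element of the hull is definable from parameters that are elements of $N_\gamma$), but you lose internal approachability; it is precisely to get the subchains \emph{into} $N_\gamma$ that you need definability of the recursion, and that is what Tarski blocks.

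The standard repair keeps your construction intact but runs the definability one level up. Build the ambient chain $\langle N_i \mid i < \lambda \rangle$ elementary in $(H(\theta^+),\in,\vartriangleleft^+)$ --- equivalently, in $(H(\theta),\in,\vartriangleleft,\mathrm{Sat})$ where $\mathrm{Sat}$ is the satisfaction predicate of $(H(\theta),\in,\vartriangleleft)$ --- with $x,\vec{a},D,H(\theta),\vartriangleleft \in N_0$, and define $C$ as you did. The map sending a well-ordered $B \subseteq \lambda$ to the sequence of Skolem hulls in $(H(\theta),\in,\vartriangleleft)$ produced by your recursion applied to the initial segments of $B$ \emph{is} definable over $H(\theta^+)$ from the parameters $x$, $H(\theta)$, $\vartriangleleft$, since the satisfaction relation of $(H(\theta),\in,\vartriangleleft)$ is an element of $H(\theta^+)$. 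Applying this to $B = A\cap\gamma_\zeta \in N_\gamma$ (this is where approachability enters, exactly as in your write-up) and using $N_\gamma \prec H(\theta^+)$ gives $\langle M_\eta \mid \eta \le \zeta \rangle \in N_\gamma$; in particular $M_\zeta \in N_\gamma$, so $\sup(M_\zeta\cap\lambda)$, being definable from $M_\zeta,\lambda \in N_\gamma$ and below $\lambda$, lies in $N_\gamma \cap \lambda = \gamma$. The $M_\zeta$ are still elementary substructures of $(H(\theta),\in,\vartriangleleft)$, so the conclusion of the lemma is unchanged, and the rest of your verification (monotonicity, continuity, built-in internal approachability, and $A \subseteq M$ for the lower bound on the supremum) goes through verbatim. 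One last caveat, shared by the statement itself: for $\cf(\gamma)=\omega$ clause (1) cannot hold literally, since elementary substructures of $H(\theta)$ are infinite, so the lemma must be read, as you implicitly did, for $\gamma$ of uncountable cofinality.
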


Before we introduce specific forcing posets, we recall the notion of strategic closure.

\begin{definition}
	Let $\mathbb{P}$ be a partial order and let $\beta$ be an ordinal.
	\begin{enumerate}
		\item {The two-player game $G_\beta(\mathbb{P})$ is defined as follows: Players I and II alternately play entries in $\langle p_\alpha \mid \alpha < \beta \rangle$, a decreasing sequence of conditions in $\mathbb{P}$ with $p_0 = \bbm{1}_{\mathbb{P}}$. Player I plays at odd stages, and Player II plays at even stages (including all limit stages). If there is an even stage $\alpha < \beta$ at which Player II cannot play, then Player I wins. Otherwise, Player II wins.}
		\item{$G_\beta^*(\mathbb{P})$ is defined just as $G_\beta(\mathbb{P})$ except Player II no longer plays at limit stages. Instead, if $\alpha < \beta$ is a limit ordinal, then $p_\alpha = \inf(\{p_\gamma \mid \gamma < \alpha \})$ if such a condition exists. If, for some limit $\alpha < \beta$, such a condition does not exist, then Player I wins. Otherwise, Player II wins.}
		\item{$\mathbb{P}$ is {\em $\beta$-strategically closed} if Player II has a winning strategy for the game $G_\beta(\mathbb{P})$. $\mathbb{P}$ is {\em strongly $\beta$-strategically closed} if Player II has a winning strategy for the game $G_\beta^*(\mathbb{P})$. The notions of \emph{{$<\beta$-strategically} closed} and {\em strongly $<\beta$-strategically closed} are defined in the obvious way.}
	\end{enumerate}
\end{definition}

We now define a number of forcing posets to be used throughout the paper and state some of their basic properties. We first introduce a poset used to force $AP_\mu$. Suppose $\mu$ is a singular cardinal, $\lambda = \mu^+$, and $\lambda^{<\lambda} = \lambda$. Let $\vec{a} = \langle a_\alpha \mid \alpha < \lambda \rangle$ be an enumeration of the bounded subsets of $\lambda$. Let $S$ be the set of ordinals that are approachable with respect to $\vec{a}$. The poset $\bb{A}_{\vec{a}}$ consists of closed, bounded subsets of $S$ and is ordered by end-extension, i.e. $c \leq d$ if $c \cap (\max(d)+1) = d$. The following is due to Shelah.

\begin{proposition}
	$\bb{A}_{\vec{a}}$ is strongly $(<\lambda)$-strategically closed.
\end{proposition}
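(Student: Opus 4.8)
The plan is to fix an arbitrary $\beta < \lambda$ and describe a winning strategy for Player II in the game $G^*_\beta(\bb{A}_{\vec a})$; since $\beta$ is arbitrary below $\lambda$, this establishes strong $(<\lambda)$-strategic closure. Player II loses only in two ways: at one of her (even successor) turns she fails to produce a legal extension, or at some limit stage $\alpha < \beta$ the required infimum fails to exist. The first is harmless, since given the current condition $p$, any point of $S$ above $\max(p)$ extends $p$, so all that is needed is that $S$ be unbounded in $\lambda$. The real content is the second: the natural candidate for the infimum of a decreasing chain $\langle p_\gamma \mid \gamma < \alpha\rangle$ is $q = \bigcup_{\gamma<\alpha} p_\gamma \cup \{\delta_\alpha\}$, where $\delta_\alpha = \sup_{\gamma<\alpha}\max(p_\gamma)$; this $q$ is closed and bounded (as $\alpha < \beta < \lambda$ with $\lambda$ regular, so $\delta_\alpha < \lambda$), and it is a condition exactly when $\delta_\alpha \in S$, i.e.\ when $\delta_\alpha$ is approachable with respect to $\vec a$. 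So the whole problem reduces to engineering the play so that every limit supremum $\delta_\alpha$ lands in $S$.

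The difficulty is that the supremum of an increasing sequence of approachable points need not be approachable, so Player II cannot simply play arbitrary points of $S$; she must thread her moves through an internally approachable chain of elementary submodels. The engine of the argument is the following converse to Lemma \ref{ialemma}: if $\langle M_i \mid i < \kappa\rangle$ is an internally approachable chain of elementary substructures of $(H(\theta),\in,\vartriangleleft)$ with $\vec a \in M_0$, each $|M_i| < \lambda$, $\kappa$ regular, and $\gamma := \sup\big(\textstyle\bigcup_{i<\kappa} M_i \cap \lambda\big)$ has cofinality $\kappa$, then $\gamma \in S$. To see this, put $A = \{\sup(M_i \cap \lambda) \mid i < \kappa\}$; this is cofinal in $\gamma$ of order type $\kappa = \cf(\gamma)$, and for any $\bar\beta < \gamma$ the initial segment $A \cap \bar\beta$ is coded by some $\langle M_i \mid i \le i_0\rangle \in M_{i_0+1}$, hence is a bounded subset of $\lambda$ lying in $M_{i_0+1}$; since $\vec a \in M_{i_0+1}$ enumerates all bounded subsets of $\lambda$, we get $A \cap \bar\beta = a_\alpha$ for some $\alpha \in M_{i_0+1}\cap\lambda < \gamma$, witnessing approachability. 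This is where $\lambda^{<\lambda} = \lambda$ and the fact that $\vec a$ enumerates \emph{all} bounded subsets are used; note that, because only $|M_i| < \lambda$ is required, internally approachable chains of any regular length $\kappa < \lambda$ are available, which incidentally shows $S$ is unbounded.

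With this in hand, Player II proceeds as follows. Fixing a large regular $\theta$ and a well-ordering $\vartriangleleft$, she builds alongside the game an increasing, continuous, internally approachable chain $\langle M_\gamma \mid \gamma < \beta\rangle$ of elementary substructures of $(H(\theta),\in,\vartriangleleft)$, each of size $<\lambda$ and containing $\vec a, \mu, \lambda$. At her turn at stage $\gamma$, confronted with Player I's last condition $p_{\gamma-1}$, she chooses $M_\gamma$ to contain $p_{\gamma-1}$ together with the entire previous sequence $\langle M_\eta \mid \eta < \gamma\rangle$ (maintaining internal approachability), building $M_\gamma$ itself as the union of an internally approachable chain of some regular length $<\lambda$ so that, by the Fact, $\delta := \sup(M_\gamma \cap \lambda) \in S$; she then plays $p_\gamma = p_{\gamma-1} \cup \{\delta\}$, which is legal since $p_{\gamma-1} \in M_\gamma$ guarantees $\max(p_{\gamma-1}) < \delta$. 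At a limit stage $\alpha$, the suprema $\sup(M_\gamma \cap \lambda)$ attained at Player II's turns are cofinal in $\delta_\alpha$, so $\delta_\alpha = \sup(\bigcup_{\gamma<\alpha} M_\gamma \cap \lambda)$; since $\langle M_\gamma \mid \gamma<\alpha\rangle$ is internally approachable, the Fact (applied to a cofinal subchain of order type $\cf(\delta_\alpha)$) yields $\delta_\alpha \in S$, so the infimum exists. Thus Player II never gets stuck and wins $G^*_\beta(\bb{A}_{\vec a})$.

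The main obstacle is the bookkeeping that keeps the chain internally approachable in the face of Player I's arbitrary interventions: at each of her turns Player II must absorb both Player I's latest condition and the full record of her own previous models into a new model whose $\lambda$-supremum is approachable, and then verify that these locally chosen models genuinely assemble into a single globally internally approachable chain whose limit unions have suprema of the intended cofinality. Getting the two layers of internal approachability to coexist coherently — the ``vertical'' chains inside each $M_\gamma$ that make its supremum approachable at successor stages, and the ``horizontal'' chain $\langle M_\gamma\rangle$ that makes the limit suprema approachable — is the delicate point; once the converse Fact above is in place, the remainder is careful but routine.
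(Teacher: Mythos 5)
The paper gives no proof of this proposition to compare against (it is quoted as a result of Shelah), so your attempt has to be judged on its own merits. Its architecture is the standard and correct one: reduce the game to ensuring that every limit supremum $\delta_\alpha$ lands in $S$, prove the converse Fact that the supremum of the trace on $\lambda$ of an internally approachable chain of regular length $\kappa$ is approachable, and have Player II play suprema of models that absorb the history of the play. Your proof of the Fact is correct (the suprema $\sup(M_i \cap \lambda)$ are strictly increasing because $M_i, \lambda \in M_{i+1}$; initial segments of the witness lie in $M_{i_0+1}$ by internal approachability; elementarity pulls an $\vec{a}$-index below $\gamma$), and the successor-stage moves are fine.

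The gap is at limit stages $\alpha < \beta$ whose cofinality is uncountable but smaller than $\alpha$ (such stages are unavoidable once $\beta \geq \omega_1 \cdot 2 + 1$, and $(<\lambda)$-strategic closure requires all $\beta < \lambda$). There you invoke ``the Fact applied to a cofinal subchain of order type $\cf(\delta_\alpha)$,'' but a cofinal subchain of an internally approachable chain is not in general internally approachable. Internal approachability of $\langle M_{\eta_i} \mid i < \kappa \rangle$ demands $\langle M_{\eta_k} \mid k \leq i \rangle \in M_{\eta_{i+1}}$; what the ambient chain gives is $\langle M_\eta \mid \eta \leq \eta_i \rangle \in M_{\eta_i + 1}$, and to extract the subsequence one needs the index set $\{\eta_k \mid k \leq i\}$ --- an arbitrary, externally chosen infinite subset of $\alpha$ --- to be an \emph{element} of $M_{\eta_{i+1}}$, which nothing guarantees. (When $\cf(\alpha) = \omega$ this is harmless: finite tuples of elements of a model belong to the model, and each $\eta_k \in M_{\eta_k + 1}$; the problem is genuinely about uncountable cofinalities.) The failure is not merely formal: the proof of the Fact breaks at exactly this point, because the initial segments of the witness $A = \{\sup(M_{\eta_i} \cap \lambda) \mid i < \kappa\}$ need no longer lie in any of the models, so while each equals $a_\xi$ for some $\xi < \lambda$, there is no way to force $\xi < \delta_\alpha$. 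The missing idea is that the cofinal sequences through the limit stages must be fixed \emph{in advance} and made visible to the models, rather than chosen after the fact: before play begins, fix $\vec{E} = \langle E_\alpha \mid \alpha < \beta \text{ limit} \rangle$ with each $E_\alpha$ club in $\alpha$ of order type $\cf(\alpha)$ (say the $\vartriangleleft$-least such), and require $\vec{E} \in M_0$ and $\beta \subseteq M_0$ (possible since $|\beta| < \lambda$). Then $E_\alpha$, $j$, and hence $E_\alpha \cap j$ lie in $M_{j+1}$ for every $j < \alpha$, the subchain indexed by $E_\alpha$ really is internally approachable, and your Fact applies verbatim. As written, the step you flag as ``delicate'' but then dismiss as routine is precisely this unjustified assertion, and the general claim it rests on --- that cofinal subchains of internally approachable chains are internally approachable --- is false.
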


Next, we define a poset to add a stationary set that only reflects at points of small cofinality. Let $\kappa < \mu < \lambda$ be infinite, regular cardinals. Conditions in $\bb{S}^\lambda_{\kappa, \mu}$ are functions $s \in {^{\gamma_s + 1}2}$ such that:
\begin{enumerate}
	\item{$\gamma_s < \lambda$.}
	\item{$\{\alpha \leq \gamma_s \mid s(\alpha) = 1\} \subseteq S^\lambda_\kappa$.}
	\item{$\{\alpha \leq \gamma_s \mid s(\alpha) = 1\} \cap \beta$ is nonstationary in $\beta$ for all $\beta \in S^\lambda_{\geq \mu}$.}
\end{enumerate}
Conditions are ordered by reverse inclusion. We will sometimes abuse notation and identify $s$ with $\{\alpha \leq \gamma_s \mid s(\alpha) = 1\}$ in statements such as ``$s$ does not reflect at any $\beta \in S^\lambda_{\geq \mu}$." Note, however, that recovering $s$ from $\{\alpha \leq \gamma_s \mid s(\alpha) = 1\}$ requires the parameter $\gamma_s$. 

Proofs of the following facts can be found in \cite{reflection}.

\begin{lemma}
	Let $\bb{S} = \bb{S}^\lambda_{\kappa, \mu}$.
	\begin{enumerate}
		\item{$\bb{S}$ is $\mu$-directed closed.}
		\item{$\bb{S}$ is $<\lambda$-strategically closed.}
		\item{Let $G$ be $\bb{S}$-generic over $V$, and let $S = \{\alpha < \lambda \mid$ for some $s\in G$, $s(\alpha) = 1\}$. Then, in $V[G]$, $S$ is a stationary subset of $S^\lambda_\kappa$ that does not reflect at any ordinal in $S^\lambda_{\geq \mu}$.}
	\end{enumerate}
\end{lemma}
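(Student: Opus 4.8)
The plan is to treat the three clauses in turn, with the winning strategy of clause (2) doing the real work and being reused in clause (3). The one structural observation to record at the outset is that, since conditions are functions whose domains are intervals of the form $[0,\gamma_s]$ and the ordering is reverse inclusion, any two compatible conditions are comparable (one end-extends the other); in particular every directed family of conditions is a chain.

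For clause (1), I would take a directed $D$ with $|D| < \mu$, which by the observation above is a chain, and set $\gamma^* = \sup_{s \in D} \gamma_s$. Since $\lambda$ is regular and $|D| < \mu < \lambda$ we get $\gamma^* < \lambda$, and $\cf(\gamma^*) \leq |D| < \mu$. Let $s^* = \bigcup D$, and if $\gamma^* \notin \dom(s^*)$ extend it by declaring $s^*(\gamma^*) = 0$. The first two defining clauses of the poset are immediate; for the third, nonstationarity at $\beta < \gamma^*$ is inherited from a member of $D$, at $\beta > \gamma^*$ the set of $1$'s is bounded below $\beta$, and the case $\beta = \gamma^*$ simply cannot arise because $\cf(\gamma^*) < \mu$ forces $\gamma^* \notin S^\lambda_{\geq \mu}$. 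Thus $s^*$ is the required lower bound.

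For clause (2), fix $\beta < \lambda$ and describe a winning strategy for Player II in $G_\beta(\bb{S})$: at a successor even stage, extend Player I's last condition by appending a single $0$ at the top; at a limit even stage $\delta$, play $\bigcup_{\alpha<\delta} p_\alpha$ together with the value $0$ at $\gamma^* = \sup_{\alpha<\delta}\gamma_{p_\alpha}$ (which is $<\lambda$ since $\delta < \beta < \lambda$ and $\lambda$ is regular). The crucial feature is that the top ordinals of Player II's conditions always carry the value $0$, and because she plays continuously at limit stages these top ordinals form a \emph{club} $C$ in $\gamma^*$. Every element of $C$ has value $0$, so $C$ is disjoint from the set of $1$'s appearing below $\gamma^*$, which is therefore nonstationary in $\gamma^*$ no matter what $\cf(\gamma^*)$ is. This is exactly the third defining clause at the new top point, so the union is a legitimate condition and a lower bound of the play, and Player II never gets stuck; as $\beta < \lambda$ was arbitrary, $\bb{S}$ is $<\lambda$-strategically closed.

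For clause (3), nonreflection is the easy half: given $\beta \in S^\lambda_{\geq \mu}$, by density (extend by $0$'s) there is $s \in G$ with $\gamma_s \geq \beta$, whence $S \cap \beta$ equals the set of $1$'s of $s$ below $\beta$, which by the third defining clause is nonstationary in $\beta$ as witnessed by a ground-model club that remains closed and unbounded in $V[G]$; and every $1$ lies in $S^\lambda_\kappa$, so $S \subseteq S^\lambda_\kappa$. For stationarity, fix $p$ and a name $\dot D$ for a club in $\lambda$, and run the strategy of clause (2) for $\kappa$ rounds while choosing Player I's moves to decide longer and longer initial segments of $\dot D$, producing ordinals $\delta_i$ forced into $\dot D$ with $\sup_i \delta_i = \sup_i \gamma_{p_i} =: \gamma$, so that $\cf(\gamma)=\kappa$ and $\gamma$ is forced into $\dot D$. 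The strategy supplies a lower bound $q$ with top $\gamma$; redefining $q(\gamma)=1$ keeps $q$ a condition, since $\gamma \in S^\lambda_\kappa$ and $\cf(\gamma)=\kappa<\mu$ imposes no new reflection constraint at $\gamma$, so $q \Vdash \check{\gamma} \in \dot D \cap \dot S$. I expect the crux throughout to be the bookkeeping in clause (2): arranging that Player II's top points are both cofinal in and closed below each limit $\gamma^*$, so that they yield a club avoiding the $1$'s precisely in the dangerous case $\cf(\gamma^*) \geq \mu$; directed closure, nonreflection, and the stationarity density argument all follow by reusing this device.
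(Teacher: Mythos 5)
Your proposal is correct, and it follows essentially the same route as the proof this paper points to (the lemma is quoted here with proof deferred to \cite{reflection}): chains suffice for directed closure since compatible conditions are comparable, Player II wins by keeping the value $0$ at her top points so that they form a club witnessing clause (3) at limits of any cofinality, and stationarity comes from running this strategy against conditions deciding elements of $\dot{D}$ and then flipping the final top value to $1$ at an ordinal of cofinality $\kappa < \mu$, where clause (3) imposes no constraint. The only point worth making explicit is that $<\lambda$-strategic closure gives $\lambda$-distributivity, so cofinalities $\leq \lambda$ (and hence the sets $S^\lambda_\kappa$ and $S^\lambda_{\geq\mu}$) are the same in $V$ and $V[G]$, which your clause (3) argument tacitly uses.
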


For some constructions we will need a variant of $\bb{S}^\lambda_{\kappa, \mu}$. Let $\mu < \lambda$ be infinite, regular cardinals, and let $T\subseteq S^\lambda_{<\mu}$ be stationary. $\bb{S}_{T, \mu}$ is defined exactly as $\bb{S}^\lambda_{\kappa, \mu}$ except that, for $s \in \bb{S}_{T, \mu},$ instead of clause (2), we require that, if $s(\alpha) = 1$, then $\alpha \in T$. Note that $\bb{S}^\lambda_{\kappa, \mu} = \bb{S}_{S^\lambda_\kappa, \mu}$. The purpose of $\bb{S}_{T, \mu}$ is to add a subset of $T$ that does not reflect at any ordinals in $S^\lambda_{\geq \mu}$. The same arguments used for $\bb{S}^\lambda_{\kappa, \mu}$ show that $\bb{S}_{T, \mu}$ is $\mu$-directed closed and $<\lambda$-strategically closed. If $S$ (with canonical name $\dot{S}$) is the subset of $T$ added by $\bb{S}_{T, \mu}$, it is clear that $S$ does not reflect at any ordinals in $S^\lambda_{\geq \mu}$. With an additional assumption, we can ensure as well that $S$ is stationary.

\begin{lemma} \label{genstat}
	Suppose that $T \in I[\lambda]$. Then $\Vdash_{\bb{S}_{T, \mu}} ``\dot{S}$ is stationary."
\end{lemma}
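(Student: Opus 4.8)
The plan is to verify the usual density characterization of stationarity in the extension: given an arbitrary condition $p \in \bb{S}_{T,\mu}$ and an arbitrary $\bb{S}_{T,\mu}$-name $\dot{C}$ for a club in $\lambda$, I will produce $q \le p$ and an ordinal $\gamma$ with $q \Vdash ``\gamma \in \dot{S} \cap \dot{C}."$ Fix a regular $\theta$ with $\bb{S}_{T,\mu}, \dot{C}, p \in H(\theta)$ and a well-order $\vartriangleleft$ of $H(\theta)$, and apply Lemma \ref{ialemma} with $x = \{\bb{S}_{T,\mu},\dot{C},p\}$ and $S = T$ to obtain a club $C \subseteq \lambda$. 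Since $T$ is stationary, choose $\gamma \in C \cap T$; then $\kappa := \cf(\gamma) < \mu$, and Lemma \ref{ialemma} supplies an internally approachable chain $\langle M_i \mid i<\kappa\rangle$ with $x\in M_0$, $|M_i|<\kappa$, and $\gamma = \sup(M\cap\lambda)$, where $M=\bigcup_{i<\kappa}M_i$. Write $\delta_i = \sup(M_i\cap\lambda)$, so that $\langle\delta_i\mid i<\kappa\rangle$ is increasing, continuous, and cofinal in $\gamma$, with each $\delta_i<\gamma$.

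Next I would build, by recursion on $i<\kappa$, a $\le$-decreasing sequence $\langle p_i\mid i<\kappa\rangle$ below $p$ together with ordinals $\langle\beta_i\mid i<\kappa\rangle$ so that, for every $i$: $p_i\in M_{i+1}$ (hence $\gamma_{p_i}<\delta_{i+1}<\gamma$), and $\delta_i<\gamma_{p_{i+1}}$ with $\beta_{i+1}\in(\delta_i,\delta_{i+1})$ and $p_{i+1}\Vdash ``\beta_{i+1}\in\dot{C}."$ At a successor step, since $\dot{C}$ is forced to be unbounded and $p_i,\delta_i\in M_{i+1}$, the statement ``there is $r\le p_i$ with $\gamma_r>\delta_i$ and an ordinal $\beta>\delta_i$ such that $r\Vdash\beta\in\dot{C}$'' holds in $H(\theta)$ (using that conditions may be padded with $0$'s); taking the $\vartriangleleft$-least such witnesses yields $p_{i+1},\beta_{i+1}\in M_{i+1}$ by elementarity. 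At a limit step $i$, I let $p_i$ be $\bigcup_{j<i}p_j$ extended by the value $0$ at the supremum $\eta_i$ of the earlier domains; since $\eta_i\le\delta_i<\gamma$ has cofinality $\cf(i)<\mu$ it lies outside $S^\lambda_{\ge\mu}$, and below $\eta_i$ the set of $1$'s agrees with some $p_j$, so $p_i$ is a condition. As $p_i$ is definable from $\langle p_j\mid j<i\rangle$, which $M_{i+1}$ can reconstruct because $\langle M_k\mid k\le i\rangle\in M_{i+1}$ and the recursion is $\vartriangleleft$-canonical, we again get $p_i\in M_{i+1}$.

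Finally I would close off. Let $q$ be $\bigcup_{i<\kappa}p_i$ extended by setting $q(\gamma)=1$. Its $1$'s lie in $T$ because $\gamma\in T$ and each $p_i$ is a condition, and for every $\beta\in S^\lambda_{\ge\mu}$ with $\beta\le\gamma$ we have $\beta<\gamma$ (as $\cf(\gamma)=\kappa<\mu$ forces $\gamma\notin S^\lambda_{\ge\mu}$), so $\{\alpha\mid q(\alpha)=1\}\cap\beta$ coincides with $\{\alpha\mid p_i(\alpha)=1\}\cap\beta$ for a suitable $i$ and is nonstationary in $\beta$; thus $q\in\bb{S}_{T,\mu}$ and $q\le p$. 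Since $q\le p_i$ for all $i$, $q\Vdash ``\beta_i\in\dot{C}"$ for all $i$, and because $\sup_{i<\kappa}\beta_i=\gamma<\lambda$ while $\dot{C}$ is forced closed, $q\Vdash ``\gamma\in\dot{C}."$ As $q(\gamma)=1$, we also have $q\Vdash ``\gamma\in\dot{S},"$ which completes the density argument.

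I expect the main obstacle to be the limit stages of the recursion: I must keep every $\gamma_{p_i}$ strictly below $\gamma$ (so that no $p_i$ decides the value at $\gamma$) while forcing the domains to be cofinal in $\gamma$ and simultaneously keeping each $p_i$ inside $M_{i+1}$. This is precisely what the internal approachability of the chain — and hence the hypothesis $T\in I[\lambda]$ via Lemma \ref{ialemma} — provides: the coherence $\langle M_k\mid k\le i\rangle\in M_{i+1}$ lets the models capture the canonically-defined construction, and the bound $\sup(M_i\cap\lambda)<\gamma$ confines the conditions below $\gamma$. Everything else is routine bookkeeping about padding conditions with $0$'s and the behavior of the nonstationarity clause at ordinals of cofinality $<\mu$.
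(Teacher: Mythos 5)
Your proposal is correct and follows essentially the same argument as the paper: use $T \in I[\lambda]$ to obtain an internally approachable chain whose union meets $\lambda$ in a point $\gamma \in T$, build a decreasing sequence of conditions lying inside the chain and deciding cofinally many members of the club below $\gamma$, then cap the union of the sequence with $(\gamma,1)$ to force $\gamma \in \dot{S} \cap \dot{C}$. The differences are only presentational — you invoke Lemma \ref{ialemma} explicitly and spell out the limit stages and the $\vartriangleleft$-least-witness bookkeeping that the paper compresses into ``a straightforward recursion using the $\mu$-closure of $\bb{S}$'' and the internal approachability of the chain.
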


\begin{proof}
	Let $\bb{S} = \bb{S}_{T, \mu}$. Let $\dot{C}$ be an $\bb{S}$-name for a club in $\lambda$, and let $s \in \bb{S}$. We will find $s^* \leq s$ and $\beta < \lambda$ such that $s^* \Vdash ``\beta \in \dot{S} \cap \dot{C}."$

	Let $\theta$ be a sufficiently large regular cardinal. Since $T \in I[\lambda]$, we can find $M \prec H(\theta)$ such that:
	\begin{itemize}
		\item{$\beta:= \sup(M \cap \lambda) \in T$.}
		\item{$|M| = \kappa$, where $\kappa = \cf(\beta)$.}
		\item{$M$ is the union of an internally approachable chain $\langle M_i \mid i < \kappa \rangle$, where $|M_i| < \kappa$ for all $i < \kappa$.}
		\item{All relevant information (including $\bb{S}$, $\dot{C}$, and $s$) is in $M_0$.}
	\end{itemize}

	We now construct a decreasing sequence of conditions $\langle s_i \mid i < \kappa \rangle$ such that:
	\begin{itemize}
		\item{$s_0 \leq s$.}
		\item{For all $i < \kappa$, $s_i \in M_{i + 1}$.}
		\item{For all $i < \kappa$, there is $\alpha_i \geq \sup(M_i \cap \lambda)$ such that $s_i \Vdash ``\alpha_i \in \dot{C}."$}
	\end{itemize}
	The construction is a straightforward recursion using the $\mu$-closure of $\bb{S}$ and maintaining the additional requirement, made possible by the internal approachability of $\langle M_i \mid i < \kappa \rangle$, that, for every $i < \kappa$, $\langle s_j \mid j < i \rangle \in M_{i+1}$. Now let $s^* = \{(\beta, 1)\} \cup \bigcup_{i < \kappa} s_i$. $s^*$ is easily seen to be a member of $\bb{S}$, and $s^* \Vdash ``\beta \in \dot{S} \cap \dot{C}."$
\end{proof}

We now introduce a well-known poset used to destroy the stationarity of subsets of $\lambda$, where $\lambda$ is an uncountable, regular cardinal. Let $S$ be a subset of $\lambda$, and let $CU(S)$ consist of closed, bounded $t \subset \lambda$ such that $t \cap S = \emptyset$ and, if $\alpha \in \mathrm{nacc}(t)$, then $\cf(\alpha) = \omega$. (This last condition is not strictly necessary, but it will make certain technical points simpler.) We denote $\max(t)$ by $\gamma_t$. If $G$ is $CU(S)$-generic over $V$, then $S$ is no longer stationary in $V[G]$. In general, forcing with $CU(S)$ can collapse cardinals. However, if $S$ was just added by $\bb{S}^\lambda_{\kappa, \mu}$ or $\bb{S}_{T, \mu}$, then the forcing is quite nice. 

\begin{lemma}
	Let $\bb{S} = \bb{S}^\lambda_{\kappa, \mu}$ (or $\bb{S}_{T, \mu})$, and let $\dot{S}$ be a name for the stationary subset of $\lambda$ added by $\bb{S}$. Then $\bb{S} * CU(\dot{S})$ has a $\lambda$-closed dense subset.
\end{lemma}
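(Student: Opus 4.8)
The plan is to exhibit an explicit $\lambda$-closed dense subset $D \subseteq \bb{S} * CU(\dot{S})$ consisting of pairs in which the second coordinate is a ground-model check name whose maximum agrees with $\gamma_s$. Writing $S_s = \{\alpha \leq \gamma_s \mid s(\alpha) = 1\}$ for $s \in \bb{S}$, let $D$ be the set of all $(s, \check{t})$ such that $t \in V$ is a closed, bounded subset of $\lambda$ with $\mathrm{nacc}(t) \subseteq S^\lambda_\omega$, $\gamma_t = \gamma_s$, $s(\gamma_s) = 0$, and $t \cap S_s = \emptyset$. Whenever $(s, \check{t}) \in D$, the condition $s$ decides $\dot{S} \cap (\gamma_s + 1) = S_s$, so from $t \subseteq \gamma_s + 1$ and $t \cap S_s = \emptyset$ we get $s \Vdash \check{t} \in CU(\dot{S})$; hence $D$ really is a subset of $\bb{S} * CU(\dot{S})$. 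The argument is uniform in the two cases $\bb{S} = \bb{S}^\lambda_{\kappa, \mu}$ and $\bb{S} = \bb{S}_{T, \mu}$, since the only clause on $\bb{S}$-conditions affected below is the one locating $S_s$ inside $S^\lambda_\kappa$ (respectively $T$), and our extensions never add new $1$'s.

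For density, fix $(s_0, \dot{t}_0) \in \bb{S} * CU(\dot{S})$. Because $\bb{S}$ is $<\lambda$-strategically closed, it is $<\lambda$-distributive and adds no new bounded subsets of $\lambda$; thus I first strengthen $s_0$ to decide $\max(\dot{t}_0)$ and then, by distributivity, find $s_1 \leq s_0$ and a ground-model closed, bounded $t_1$ with $s_1 \Vdash \dot{t}_0 = \check{t}_1$. Since $s_1 \Vdash \check{t}_1 \cap \dot{S} = \emptyset$ and $s_1$ decides $\dot{S}$ below $\gamma_{s_1}$, we get $t_1 \cap S_{s_1} = \emptyset$. Now pick any $\gamma \in S^\lambda_\omega$ with $\gamma > \max(\gamma_{s_1}, \gamma_{t_1})$, let $s$ be the extension of $s_1$ to domain $\gamma + 1$ obtained by appending $0$'s, and set $t = t_1 \cup \{\gamma\}$. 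Then $S_s = S_{s_1}$ is bounded below $\gamma$, so $s \in \bb{S}$ and $t \cap S_s = \emptyset$; moreover $\gamma$ is the unique new point of $t$, it is a non-accumulation point, and $\cf(\gamma) = \omega$, so $\mathrm{nacc}(t) \subseteq S^\lambda_\omega$. Thus $(s, \check{t}) \in D$, and it extends $(s_1, \check{t}_1) \leq (s_0, \dot{t}_0)$.

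For $\lambda$-closure, let $\langle (s_i, \check{t}_i) \mid i < \delta \rangle$ be decreasing in $D$ with $\delta < \lambda$ a limit; write $\gamma_i = \gamma_{s_i} = \gamma_{t_i}$ and $\gamma = \sup_{i < \delta} \gamma_i < \lambda$. Put $t = \bigl(\bigcup_{i < \delta} t_i\bigr) \cup \{\gamma\}$ and let $s$ be $\bigcup_{i < \delta} s_i$ extended to domain $\gamma + 1$ by setting $s(\gamma) = 0$. Since each $\gamma_i \in t_i$, the set $\bigcup_{i < \delta} t_i$ is closed below $\gamma$ with supremum $\gamma$, so $t$ is closed and bounded with $\gamma$ a limit point of $t$; as every non-accumulation point of $t$ below $\gamma$ is a non-accumulation point of some $t_i$, we have $\mathrm{nacc}(t) \subseteq S^\lambda_\omega$, and $t \cap S_s = \emptyset$ follows coordinatewise. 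The only nontrivial point is verifying clause (3) for $s$ at $\beta = \gamma$ when $\cf(\gamma) \geq \mu$: here $t \cap \gamma = \bigcup_{i < \delta} t_i$ is a \emph{club} in $\gamma$ disjoint from $S_s$, so $S_s \cap \gamma$ is nonstationary in $\gamma$, exactly as required. (For $\beta \neq \gamma$, clause (3) is inherited from the $s_i$ or holds because $S_s \cap \beta$ is bounded.) Therefore $(s, \check{t}) \in D$ is a lower bound.

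The heart of the matter — and the step that makes the combined forcing behave so much better than $\bb{S}$ alone — is precisely this last observation: although $\bb{S}$ fails to be $\lambda$-closed exactly because a decreasing sequence can accumulate a stationary set of $1$'s below a point $\gamma$ of cofinality $\geq \mu$, the simultaneously built club $t$ runs through the complement of those $1$'s and thereby furnishes the nonstationarity witness demanded by clause (3). The routine bookkeeping — deciding $\dot{t}_0$ by a single condition via distributivity, and checking that $t$ is closed with non-accumulation points of countable cofinality — is where one must be slightly careful, but it presents no real obstacle.
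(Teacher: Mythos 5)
Your proof is correct and takes essentially the same approach as the paper: your set $D$ is exactly the dense set the paper exhibits (pairs whose club part is decided to be a ground-model set $t^*$ with $\gamma_{t^*}=\gamma_s$), and your density and closure arguments, including the key point that the union of the clubs witnesses nonstationarity of $S_s\cap\gamma$ at the supremum so that clause (3) survives at limits, are the intended ones (the paper itself defers these details to \cite{reflection}).
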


The reader is again referred to \cite{reflection} for the proof. For completeness, we mention that the $\lambda$-closed dense subset is the set of $(s, \dot{t})$ such that there is $t^* \in V$ such that $s \Vdash ``\dot{t} = t^*"$ and $\gamma_{s} = \gamma_{t^*}$.

We will need the following well-known facts:

\begin{fact} \cite{magidor} \label{lift}
	Let $\kappa$ be a regular cardinal, and let $\kappa<\lambda<\mu$. Suppose that, in $V^{\mathrm{Coll}(\kappa, <\lambda)}$, $\mathbb{P}$ is a separative, strongly $\kappa$-strategically closed partial order and $|\mathbb{P}|<\mu$. Let $i$ be the natural complete embedding of $\mathrm{Coll}(\kappa, <\lambda)$ into $\mathrm{Coll}(\kappa, <\mu)$ (namely, the identity embedding). Then $i$ can be extended to a complete embedding $j$ of $\mathrm{Coll}(\kappa, <\lambda)*\mathbb{P}$ into $\mathrm{Coll}(\kappa, <\mu)$ so that the quotient forcing $\mathrm{Coll}(\kappa, <\mu)/j[\mathrm{Coll}(\kappa, <\lambda)*\mathbb{P}]$ is $\kappa$-closed.
\end{fact}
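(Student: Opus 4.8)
The plan is to realize $j$ by factoring the larger collapse and absorbing $\mathbb{P}$ into the tail, working over the model $W = V^{\mathrm{Coll}(\kappa, <\lambda)}$ in which $\mathbb{P}$ lives. First I would invoke the standard product factorization $\mathrm{Coll}(\kappa, <\mu) \cong \mathrm{Coll}(\kappa, <\lambda) \times \mathrm{Coll}(\kappa, [\lambda, \mu))$, where $\mathrm{Coll}(\kappa, [\lambda, \mu))$ denotes the $<\kappa$-support product of the $\mathrm{Coll}(\kappa, \alpha)$ for $\lambda \le \alpha < \mu$. This shows at once that $i$, the identity embedding of $\mathrm{Coll}(\kappa, <\lambda)$, is complete and that its quotient is (a name for) the $\kappa$-closed poset $\mathbb{R} := \mathrm{Coll}(\kappa, [\lambda, \mu))^W$. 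By the general lemma on extending a complete embedding $A \lessdot B$ to $A * \dot{\mathbb{P}} \lessdot B$ (which reduces the task to embedding $\dot{\mathbb{P}}$ into the quotient $B/A$), it then suffices to produce, in $W$, a complete embedding $e \colon \mathbb{P} \to \mathbb{R}$ whose quotient $\mathbb{R}/e[\mathbb{P}]$ is $\kappa$-closed; for then $j = i * e$ extends $i$ and its quotient is precisely $(\mathbb{R}/e[\mathbb{P}])^W$, as required.

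Next I would isolate a single collapsing block to absorb into. In $W$ we have $\lambda = \kappa^+$, and, using the cardinal arithmetic available in the ambient model, I would fix a cardinal $\theta^*$ with $\lambda \le \theta^* < \mu$, $|\mathbb{P}| \le \theta^*$, and $(\theta^*)^{<\kappa} = \theta^*$. Splitting off the factor at the index $\theta^*$ gives $\mathbb{R} \cong \mathrm{Coll}(\kappa, \theta^*) \times \mathbb{R}'$ with $\mathbb{R}'$ still $\kappa$-closed, and, since $(\theta^*)^{<\kappa} = \theta^*$, the block $\mathrm{Coll}(\kappa, \theta^*)$ is isomorphic to the tree $\mathbb{T} := {}^{<\kappa}\theta^*$ ordered by reverse end-extension. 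Thus the problem reduces to building a complete embedding $e_0 \colon \mathbb{P} \to \mathbb{T}$ with $\kappa$-closed quotient $\mathbb{U}$; composing with the inclusion $\mathbb{T} \hookrightarrow \mathbb{T} \times \mathbb{R}'$ then yields $e$ with quotient $\mathbb{U} \times \mathbb{R}'$, a product of $\kappa$-closed posets and hence $\kappa$-closed.

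The heart of the argument is the construction of $e_0$, which I would carry out by building a projection $\pi \colon \mathbb{T} \to \mathbb{P}$ recursively on the levels of the tree. Set $\pi(\langle\rangle) = 1_\mathbb{P}$; at a node $t$ with $\pi(t) = p$, assign to the immediate successors $\langle t {}^\frown \xi \mid \xi < \theta^*\rangle$ the elements of a maximal antichain below $p$ in $\mathbb{P}$, arranging by a bookkeeping over $\theta^*$ that every condition of $\mathbb{P}$ is eventually refined by some $\pi(s)$ and every maximal antichain of $\mathbb{P}$ is met, which gives density of the range and hence completeness of the induced embedding. At a limit level $\eta < \kappa$ one is forced to set $\pi(t) = \inf_{\zeta < \eta} \pi(t \restriction \zeta)$, and the whole construction only makes sense if these infima exist. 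This is exactly where strong strategic closure enters: I would organize the recursion so that along every branch the conditions $\langle \pi(t \restriction \zeta) \mid \zeta < \eta\rangle$ form a run of $G^*_\kappa(\mathbb{P})$ consistent with a fixed winning strategy for Player II, interleaving her (infimum-guaranteeing) moves with the splitting moves used for bookkeeping. Because the strategy is winning in the strong game, no limit infimum of length $< \kappa$ ever fails to exist, so $\pi$ is well-defined and continuous at limits; separativity of $\mathbb{P}$ is then used to pass from $\pi$ to a genuine complete embedding via the Boolean completions.

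For the $\kappa$-closure of $\mathbb{U} = \mathbb{T}/e_0[\mathbb{P}]$, let $G$ be $\mathbb{P}$-generic over $W$ and take a descending sequence $\langle t_\alpha \mid \alpha < \delta\rangle$, $\delta < \kappa$, in $\mathbb{U}$. Since $\mathbb{P}$ is strongly $\kappa$-strategically closed it is in particular $<\kappa$-distributive, so this sequence already lies in $W$; then $t^* = \bigcup_\alpha t_\alpha \in \mathbb{T}$ by closure of the tree, and $\pi(t^*) = \inf_\alpha \pi(t_\alpha)$ by continuity of $\pi$, and a density argument in $W$ (meeting the dense set of conditions that either refine $\pi(t^*)$ or are incompatible with some $\pi(t_\alpha)$) shows $\pi(t^*) \in G$, so $t^*$ bounds the sequence in $\mathbb{U}$. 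The step I expect to be the main obstacle is the simultaneous bookkeeping in the construction of $\pi$: one must meet all maximal antichains of $\mathbb{P}$ for completeness while keeping every branch a legal, strategy-guided run of $G^*_\kappa(\mathbb{P})$ so that the limit infima survive. Getting these two requirements to coexist, rather than the cardinal arithmetic or the factorizations (which are routine), is the technical core, and it is precisely the reason the hypothesis must be strong strategic closure rather than mere strategic closure.
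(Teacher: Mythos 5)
The paper never proves this Fact; it is quoted from \cite{magidor} and used as a black box, so your proposal can only be measured against the standard absorption argument. Its outer layers are sound: the factorization $\mathrm{Coll}(\kappa,<\mu) \cong \mathrm{Coll}(\kappa,<\lambda) \times \mathrm{Coll}(\kappa,[\lambda,\mu))$, the reduction to producing, in $W = V^{\mathrm{Coll}(\kappa,<\lambda)}$, a complete embedding of $\mathbb{P}$ into a single block $\mathrm{Coll}(\kappa,\theta^*)$ with $\kappa$-closed quotient, and the shape of the final quotient computation are all correct. Two of your assertions, however, conceal the places where separativity actually does the work: the existence of $\theta^* < \mu$ with $|\mathbb{P}| \le \theta^* = (\theta^*)^{<\kappa}$ needs cardinal arithmetic or largeness of $\mu$ (harmless here, since the paper only applies the Fact with $\mu$ inaccessible), and your closing density claim --- that below any condition there is one refining $\inf_\alpha \pi(t_\alpha)$ or incompatible with some $\pi(t_\alpha)$ --- is true but not obvious: one must check that a greatest lower bound in a separative poset coincides with the infimum in the Boolean completion, which is exactly what separativity buys you.

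The genuine gap is in the heart of the argument, the construction of $\pi$. You impose that the images of the $\theta^*$ immediate successors of a node form a maximal antichain below the node's image, run a bookkeeping giving a globally dense range, and then infer that ``density of the range'' gives ``completeness of the induced embedding.'' That inference is false: a monotone map with dense range need not induce a complete embedding. What is needed is either preservation of incompatibility or the projection (local lifting) property --- for every node $t$ and every $q \le \pi(t)$ there must be $t' \le t$ with $\pi(t') \le q$ --- and neither follows from your construction. Incompatibility preservation is unattainable inside $\mathbb{P}$ alone: the Fact places no nontriviality hypothesis on $\mathbb{P}$ (it may be atomic somewhere, or simply of size less than $\theta^*$), so the $\theta^*$ successors cannot in general be matched injectively to an antichain, and repetitions destroy incompatibility preservation. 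The lifting property also fails to follow: the natural argument (given $q \le \pi(t)$, descend from $t$ choosing at each splitting level a successor whose image is compatible with $q$, until the level at which $q$ is handled) breaks at limit levels, because compatibility with every member of a descending sequence does not pass to its infimum --- the very phenomenon your quotient argument has to work around; and you cannot handle all of $\mathbb{P}$ at a single node, since a condition deciding ($\le$ or $\perp$) every element of $\mathbb{P}$ is necessarily an atom. There are two standard repairs. One is to abandon the antichain requirement and build a bare projection with one-step lifting: enumerate $\mathbb{P} = \{q_\xi \mid \xi < \theta^*\}$ and let $\pi(t^{\frown}\langle\xi\rangle)$ be Player II's strategic response to a common extension of $\pi(t)$ and $q_\xi$ when one exists (and to $\pi(t)$ otherwise); then for $q = q_\xi \le \pi(t)$ the successor $t^{\frown}\langle\xi\rangle$ itself witnesses lifting, no limit stage is ever crossed, and the rest of your argument goes through. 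The other is the classical route behind \cite{magidor}: run the tree construction not for $\mathbb{P}$ but for $\mathbb{P} \times \mathrm{Coll}(\kappa,\theta^*)$, letting the collapse coordinate supply the splitting into genuine maximal antichains while the $\mathbb{P}$-coordinate follows the strategy and the bookkeeping; this yields a monotone, incompatibility-preserving map with dense range, hence an isomorphism of the Boolean completions of $\mathbb{P} \times \mathrm{Coll}(\kappa,\theta^*)$ and $\mathrm{Coll}(\kappa,\theta^*)$, after which the quotient of $\mathrm{Coll}(\kappa,<\mu)$ is (equivalent to) $\mathrm{Coll}(\kappa,\theta^*) \times \mathbb{R}'$ as computed in $W^{\mathbb{P}}$, which is $\kappa$-closed simply by the $\kappa$-distributivity of $\mathbb{P}$.
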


\begin{fact} \label{apstat}
	Let $\mu$ and $\kappa$ be cardinals. Suppose that $AP_\kappa$ holds, $S$ is a stationary subset of $S^{\kappa^+}_{<\mu}$, and $\mathbb{P}$ is a $\mu$-closed forcing poset. Then $S$ remains stationary in $V^{\mathbb{P}}$.
\end{fact}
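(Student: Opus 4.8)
The plan is to extract from $AP_\kappa$ the membership $\kappa^+ \in I[\kappa^+]$; since $I[\kappa^+]$ is an ideal containing the whole space, every subset of $\kappa^+$ lies in it, and in particular $S \in I[\kappa^+]$. This is exactly the hypothesis needed to apply Lemma \ref{ialemma}, which will let me use internally approachable chains cofinal in points of $S$ to run a fusion-style argument against a name for a club. Concretely, fix a $\bb{P}$-name $\dot{C}$ for a club in $\kappa^+$ and a condition $p \in \bb{P}$; by genericity it suffices to produce $q \leq p$ and $\gamma \in S$ with $q \Vdash ``\gamma \in \dot{C}"$.

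Let $\theta$ be a sufficiently large regular cardinal and $\vartriangleleft$ a well-ordering of $H(\theta)$, and apply Lemma \ref{ialemma} with $\lambda = \kappa^+$ and $x = (\bb{P}, \dot{C}, p, S)$ to obtain a club $C \subseteq \kappa^+$. Since $S$ is stationary, fix $\gamma \in C \cap S$ and set $\nu = \cf(\gamma) < \mu$ (using $S \subseteq S^{\kappa^+}_{<\mu}$). The lemma provides an internally approachable chain $\langle M_i \mid i < \nu \rangle$ of elementary substructures of $(H(\theta), \in, \vartriangleleft)$ with $x \in M_0$, $|M_i| < \nu$ for all $i$, and $\gamma = \sup(M \cap \kappa^+)$ where $M = \bigcup_{i < \nu} M_i$. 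Because $|M_i| < \nu = \cf(\gamma)$, each set $M_i \cap \kappa^+$ is bounded below $\gamma$, while the ordinals $\sup(M_i \cap \kappa^+)$ are cofinal in $\gamma$.

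I would then build, by recursion on $i < \nu$, a decreasing sequence $\langle p_i \mid i < \nu \rangle$ of conditions below $p$ together with ordinals $\beta_i$ such that $p_i \in M_{i+1}$, $\beta_i \geq \sup(M_i \cap \kappa^+)$, and $p_i \Vdash ``\beta_i \in \dot{C}"$. At stage $i$, internal approachability gives $\langle M_k \mid k \leq i \rangle \in M_{i+1}$; making every choice $\vartriangleleft$-least renders the recursion up to $i$ definable inside $M_{i+1}$ from this sequence and the parameters in $M_0$, so $\langle p_j \mid j < i \rangle \in M_{i+1}$. As $i < \nu < \mu$ and $\bb{P}$ is $\mu$-closed, this sequence has a lower bound, and since $\dot{C}$ is forced to be unbounded, the statement ``there are $q \leq p_j$ (all $j < i$) and $\beta \geq \sup(M_i \cap \kappa^+)$ with $q \Vdash \beta \in \dot{C}$'' is true and holds in $M_{i+1}$ by elementarity, yielding $p_i, \beta_i \in M_{i+1}$. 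Note $\beta_i \in M \cap \kappa^+$ forces $\beta_i < \gamma$, while $\beta_i \geq \sup(M_i \cap \kappa^+)$ forces $\sup_{i < \nu} \beta_i = \gamma$.

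Finally, since $\nu < \mu$ and $\bb{P}$ is $\mu$-closed, the full sequence $\langle p_i \mid i < \nu \rangle$ has a lower bound $q$. Then $q \Vdash ``\beta_i \in \dot{C}"$ for every $i < \nu$, and as $\dot{C}$ is forced closed with $\sup_{i<\nu}\beta_i = \gamma$, we conclude $q \Vdash ``\gamma \in \dot{C}"$. Since $\gamma \in S$, this completes the argument. I expect the main technical obstacle to be precisely the bookkeeping that keeps $\langle p_j \mid j < i \rangle \in M_{i+1}$ at each stage: this is where internal approachability, rather than mere elementarity, is essential, and it is handled by fixing the well-order $\vartriangleleft$ and choosing all objects canonically so that the partial construction is uniformly definable inside the next model of the chain.
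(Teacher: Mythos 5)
Your proof is correct, and it is essentially the paper's own approach: the paper states this Fact without proof as well known, but your argument — reducing to $S \in I[\kappa^+]$ by downward closure of the ideal, invoking Lemma \ref{ialemma}, building a decreasing sequence through an internally approachable chain while using the well-ordering $\vartriangleleft$ to keep each initial segment $\langle p_j \mid j < i \rangle$ inside $M_{i+1}$, and closing off with $\mu$-closure — is precisely the argument the paper gives for the analogous Lemma \ref{genstat}. The only wrinkle, inherited from the paper's own statement of Lemma \ref{ialemma}, is the degenerate case $\cf(\gamma) = \omega$, where the clause $|M_i| < \cf(\gamma)$ cannot be taken literally (there are no finite elementary substructures) and one instead uses an $\omega$-chain of models of size $<\kappa^+$ with suprema strictly below $\gamma$; since your recursion then has no limit stages, nothing in your construction breaks.
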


\section{Destroying stationary sets} \label{destroySection}

At many points in this paper, we will want to use a forcing iteration to destroy the stationary of many sets. We will also want to ensure that we do not collapse any cardinals in the process. With this in mind, we prove here a general lemma which we will use, either directly or via a modification, throughout the remainder of the paper.

Let $\lambda$ be a regular cardinal, and assume that $\lambda^{<\lambda} = \lambda$ and $2^\lambda = \lambda^+$. Let $\bb{S}$ be a forcing poset, and let $\dot{\bb{T}}$ be an $\bb{S}$-name for a forcing poset such that $\bb{S} * \dot{\bb{T}}$ has a dense $\lambda$-closed subset (note that, in particular, this implies that $\bb{S}$ is $\lambda$-distributive and $\Vdash_{\bb{S}}``\dot{\bb{T}}$ is $\lambda$-distributive").

In $V^{\bb{S}}$, let $\langle \bb{P}_\xi, \dot{\bb{Q}}_\zeta \mid \xi \leq \lambda^+, \zeta < \lambda^+ \rangle$ be a forcing iteration with supports of size $<\lambda$ such that, for every $\zeta < \lambda^+$, there is a $\bb{P}_\zeta$-name $\dot{T}_\zeta$ for a subset of $\lambda$ such that $\Vdash_{\bb{P}_\zeta * \bb{T}}``\dot{T}_\zeta$ is non-stationary" and $\Vdash_{\bb{P}_\zeta}``\dot{\bb{Q}}_\zeta = CU(\dot{T}_\zeta)."$ By an easy $\Delta$-system argument, $\bb{P}$ has the $\lambda^+$-c.c. 

\begin{lemma} \label{stat_dest}
	$\bb{S} * \dot{\bb{P}} * \dot{\bb{T}}$ has a dense $\lambda$-closed subset.
\end{lemma}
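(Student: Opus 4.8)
The plan is to commute the iteration $\dot{\bb{P}}$ past the ``tail'' forcing $\dot{\bb{T}}$, reducing to a situation in which each step of $\bb{P}$ destroys the stationarity of a set that has \emph{already} been made non-stationary. Working over $V^{\bb{S}}$, observe that $\bb{T}$ (the interpretation of $\dot{\bb{T}}$) is a poset lying in $V^{\bb{S}}$ that does not mention the $\bb{P}$-generic, so the usual identification of a two-step iteration with a ground-model second factor yields $\bb{P} * \check{\bb{T}} \cong \bb{P} \times \bb{T} \cong \bb{T} \times \bb{P} \cong \bb{T} * \check{\bb{P}}$. Hence $\bb{S} * \dot{\bb{P}} * \dot{\bb{T}} \cong \bb{S} * \dot{\bb{T}} * \check{\bb{P}}$, where on the right $\bb{P}$ is the same poset regarded over $V^{\bb{S} * \bb{T}}$. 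Since possessing a dense $\lambda$-closed subset is preserved under such isomorphisms, it suffices to produce one for $\bb{S} * \dot{\bb{T}} * \check{\bb{P}}$. By hypothesis $\bb{S} * \dot{\bb{T}}$ already has a dense $\lambda$-closed subset, and the two-step iteration of forcings each having a dense $\lambda$-closed subset again has one; so everything reduces to showing that, in $V^{\bb{S} * \bb{T}}$, $\bb{P}$ has a dense $\lambda$-closed subset.

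First I would check that $\bb{P}$, as computed in $V^{\bb{S}}$, is still the \emph{same} $<\lambda$-support iteration when re-read over $V^{\bb{S} * \bb{T}}$. Its conditions are functions with supports in $[\lambda^+]^{<\lambda}$ whose values are $\bb{P}_\zeta$-names; since $\bb{T}$ is $\lambda$-distributive over $V^{\bb{S}}$ it adds no new $<\lambda$-sequences, hence no new supports and no new conditions at limit stages. Thus $\bb{P}_\xi^{V^{\bb{S}}}$ is genuinely the $<\lambda$-support iteration $\langle \bb{P}_\xi, \dot{\bb{Q}}_\zeta \rangle$ over $V^{\bb{S} * \bb{T}}$, with each $\dot{\bb{Q}}_\zeta = CU(\dot{T}_\zeta)$ interpreted identically (a $\bb{P}_\zeta$-generic over $V^{\bb{S}*\bb{T}}$ is also one over $V^{\bb{S}}$, and the evaluation of $\dot{T}_\zeta$ is absolute).

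The payoff of the commutation is that, applying the same product isomorphism at each level $\zeta$, the hypothesis $\Vdash_{\bb{P}_\zeta * \bb{T}} ``\dot{T}_\zeta$ is non-stationary'' becomes $\Vdash_{\bb{T} * \bb{P}_\zeta} ``\dot{T}_\zeta$ is non-stationary''; that is, over $V^{\bb{S}*\bb{T}}$ the set $\dot{T}_\zeta$ is forced by $\bb{P}_\zeta$ to be non-stationary. The crux is then the observation that $CU(S)$ for a non-stationary $S$ has a dense $\lambda$-closed subset: fixing a club $C$ disjoint from $S$, the set $\{t \in CU(S) \mid \gamma_t \in C\}$ is dense (extend any condition by a single point of $C \cap S^\lambda_\omega$ above its maximum, which exists since such points are unbounded in $\lambda$) and $\lambda$-closed (given a decreasing sequence of length $<\lambda$, its union together with the supremum $\beta$ of the relevant maxima is again a condition: $\beta \in C$ guarantees $\beta \notin S$, and $\beta$ is an accumulation point, so the $\mathrm{nacc}$-cofinality requirement is vacuous there). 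Therefore every factor of $\bb{P}$ is forced to have a dense $\lambda$-closed subset $\dot{E}_\zeta$, and the standard fact that a $<\lambda$-support iteration of such factors has the dense $\lambda$-closed subset $\{p \mid \forall \zeta \in \mathrm{supp}(p),\ \Vdash_\zeta p(\zeta) \in \dot{E}_\zeta\}$ — whose closure under $<\lambda$-decreasing sequences uses the regularity of $\lambda$ to keep supports of size $<\lambda$ — completes the argument.

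I expect the main obstacle to be the bookkeeping of the middle two steps: verifying rigorously that the re-interpretation of $\bb{P}$ over $V^{\bb{S}*\bb{T}}$ really is the same iteration, so that non-stationarity of each $T_\zeta$ transfers through the level-by-level commutation, and that the dense $\lambda$-closed subsets of the factors amalgamate into one for the entire $<\lambda$-support iteration through all limit stages, including the top stage of cofinality $\lambda^+$. The $\lambda$-distributivity of $\bb{S}$ and $\bb{T}$ (a consequence of $\bb{S} * \dot{\bb{T}}$ having a dense $\lambda$-closed subset) and the regularity of $\lambda$ are precisely what make these amalgamations go through; notably, the $\lambda^+$-c.c.\ of $\bb{P}$ is not needed for this particular lemma.
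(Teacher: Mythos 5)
There is a genuine gap, and it sits exactly at the step your whole reduction rests on: the claim that possessing a dense $\lambda$-closed subset ``is preserved under such isomorphisms.'' The passages $\bb{S} * \dot{\bb{P}} * \dot{\bb{T}} \rightsquigarrow \bb{S} * (\dot{\bb{P}} \times \dot{\bb{T}}) \rightsquigarrow \bb{S} * (\dot{\bb{T}} \times \dot{\bb{P}}) \rightsquigarrow \bb{S} * \dot{\bb{T}} * \check{\bb{P}}$ are not isomorphisms of partial orders; they are forcing equivalences mediated by dense embeddings (the product sits densely inside each $*$-iteration, and these embeddings are not even order-reflecting: for conditions $(s,\dot{p},\dot{t})$ with $\dot{t}$ an $\bb{S}$-name, $\leq$ in $\bb{S} * \dot{\bb{P}} * \dot{\bb{T}}$ only requires $(s',\dot{p}')$ to force $\dot{t}' \leq \dot{t}$, while the product order requires $s'$ alone to force it). The property ``has a dense $\lambda$-closed subset'' does not transfer backward along a dense embedding, and there is no general theorem making it a forcing-equivalence invariant. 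In your situation the failure is concrete: the dense $\lambda$-closed set you build on the rearranged side consists of conditions whose $\bb{P}$-coordinate is an $\bb{S} * \dot{\bb{T}}$-name, tied to the clubs $\dot{C}_\zeta$ that exist only after the $\bb{T}$-forcing. A condition of $\bb{S} * \dot{\bb{P}} * \dot{\bb{T}}$ must present its $\bb{P}$-part as an $\bb{S}$-name, \emph{before} the $\bb{T}$-generic is available, so your ``nice'' conditions have no counterparts in the original poset; producing counterparts by deciding names destroys $\lambda$-closure. This matters for the paper's later uses of the lemma (e.g.\ in Section \ref{approachSect2}, where one takes a lower bound of a directed family $\{(\dot{s}_\gamma,\dot{q}_\gamma,\dot{t}_\gamma)\}$ lying in the dense closed subset of $\dot{\bb{S}} * \dot{\bb{Q}} * \dot{\bb{T}}$ and then \emph{discards the $\bb{T}$-coordinate} to get a condition of $\bb{S} * \dot{\bb{Q}}$): the dense closed set must live inside the iteration in its original order.

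There is also a secondary circularity in your second paragraph: to know that $\bb{P}$ and its iterands read the same over $V^{\bb{S} * \bb{T}}$ as over $V^{\bb{S}}$, it is not enough that $\bb{T}$ is $\lambda$-distributive over $V^{\bb{S}}$; you need $\bb{T}$ to add no new bounded subsets of $\lambda$ over each intermediate model $V^{\bb{S}}[G_{\bb{P}_\zeta}]$ (otherwise $CU(\dot{T}_\zeta)$ itself changes), and distributivity is not preserved by further forcing --- indeed, that preservation is essentially an instance of the lemma being proved, so it can only be obtained by an induction on $\zeta$ carried along with everything else. That simultaneous induction is precisely the paper's proof: it never reorders the iteration, but instead defines $\bb{U}$ as the set of $(s,\dot{p},\dot{t})$ with $(s,\dot{t}) \in \bb{U}_0$, with $s$ deciding $\dom(\dot{p})$, and with $(s, \dot{p} \restriction \zeta, \dot{t}) \Vdash_{\bb{S} * \dot{\bb{P}}_\zeta * \dot{\bb{T}}} ``\max(\dot{p}(\zeta)) \in \dot{C}_\zeta"$ for every $\zeta \in \dom(\dot{p})$ --- so the $\bb{T}$-coordinate of the condition itself supplies the club information that you tried to obtain by commuting $\bb{T}$ to the front --- and then proves density and $\lambda$-closure of $\bb{U}_\zeta$ by induction on $\zeta \leq \lambda^+$. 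Your two supporting observations (that $CU$ of a set which has been made non-stationary has a dense $\lambda$-closed subset whose conditions have maxima in a fixed disjoint club, and that such subsets amalgamate across a ${<}\lambda$-support iteration) are correct and are morally the engine of the paper's argument; the missing content of the lemma is exactly how to implement them \emph{inside} $\bb{S} * \dot{\bb{P}} * \dot{\bb{T}}$ rather than in a rearranged equivalent.
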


\begin{proof}
	For each $\zeta < \lambda^+$, let $\dot{C}_\zeta$ be an $\bb{S} * \dot{\bb{P}}_\zeta * \dot{\bb{T}}$-name for a club in $\lambda$ disjoint from $\dot{T}_\zeta$.

	Let $\bb{U}_0$ be the dense $\lambda$-closed subset of $\bb{S} * \dot{\bb{T}}$ that exists by assumption. Let $\bb{U}$ be the set of $(s, \dot{p}, \dot{t}) \in \bb{S} * \dot{\bb{P}} * \dot{\bb{T}}$ such that:

	\begin{itemize}
		\item{$(s, \dot{t}) \in \bb{U}_0$.}
		\item{$s$ decides the value of $\dom(\dot{p})$.}
		\item{For every $\zeta \in \dom(\dot{p})$, $(s, \dot{p} \restriction \zeta, \dot{t}) \Vdash_{\bb{S}*\dot{\bb{P}}_\zeta * \dot{\bb{T}}}``\max(\dot{p}(\zeta)) \in \dot{C}_\zeta"$.}
	\end{itemize}
	For $\zeta < \lambda^+$, let $\bb{U}_\zeta$ be the set of $(s, \dot{p}, \dot{t}) \in \bb{U}$ such that $\dom(\dot{p}) \subseteq \zeta$. We will prove by induction on $\zeta$ that $\bb{U}_\zeta$ is a dense, $\lambda$-closed subset of $\bb{S} * \dot{\bb{P}}_\zeta * \dot{\bb{T}}$ for all $\zeta < \lambda^+$, which will imply that $\bb{U}$ is a dense, $\lambda$-closed subset of $\bb{S} * \dot{\bb{P}} * \dot{\bb{T}}$.

	Thus, fix $\zeta < \lambda^+$, and assume we have proven that $\bb{U}_\xi$ is a $\lambda$-closed dense subset of $\bb{S}*\dot{\bb{P}}_\xi * \dot{\bb{T}}$ for all $\xi < \zeta$. We first show that $\bb{U}_\zeta$ is $\lambda$-closed. Let $\beta < \lambda$, and let $\langle (s_\alpha, \dot{p}_\alpha, \dot{t}_\alpha) \mid \alpha < \beta \rangle$ be a decreasing sequence of conditions from $\bb{U}_\zeta$. Let $X = \bigcup_{\alpha < \beta} \dom(\dot{p}_\alpha)$, and note that $X \in [\zeta]^{<\lambda}$. We will define a lower bound $(s^*, \dot{p}^*, \dot{t}^*) \in \bb{U}_\zeta$ such that $\dom(\dot{p}^*) = X$.

	First, let $(s^*, \dot{t}^*) \in \bb{U}_0$ be a lower bound for $\langle (s_\alpha ,\dot{t}_\alpha) \mid \alpha < \beta \rangle$. We define $\dot{p}^*$ by defining $\dot{p}^* \restriction \xi$ by induction on $\xi \leq \zeta$. So, suppose $\xi < \zeta$ and $\dot{p}^* \restriction \xi$ has been defined so that $(s^*, \dot{p}^* \restriction \xi, \dot{t}^*) \in \bb{U}_\xi$. If $\xi \not\in X$, then $\dot{p}^* \restriction (\xi + 1) = \dot{p}^* \restriction \xi$. Thus, suppose $\xi \in X$. Let $\alpha_\xi$ be the least $\alpha < \beta$ such that $\xi \in \dom(\dot{p}_\alpha)$. Let $\dot{\gamma}_\xi$ be an $\bb{S} * \dot{\bb{P}}_{\xi}$-name for $\sup(\{\max(\dot{p}_\alpha(\xi)) \mid \alpha_\xi \leq \alpha < \beta \})$, and let $\dot{p}^*(\xi)$ be an $\bb{S} * \dot{\bb{P}}_\xi$-name for $\{\dot{\gamma}_\xi\} \cup \bigcup_{\alpha_\xi \leq \alpha < \beta} \dot{p}_\alpha(\xi)$. It is clear that $\dot{p}^*(\xi)$ is an $\bb{S}*\dot{\bb{P}}_\xi$-name for a closed, bounded subset of $\lambda$ and that, if $(s^*, \dot{p}^* \restriction (\xi + 1), \dot{t})$ is in fact in $\bb{U}_{\xi + 1}$, then it is a lower bound for $\langle (s_\alpha, \dot{p}_\alpha \restriction (\xi + 1), \dot{t}_\alpha) \mid \alpha < \beta \rangle$. It thus remains to check that $(s^*, \dot{p}^* \restriction \xi, \dot{t}) \Vdash_{\bb{S}*\dot{\bb{P}}_\xi * \dot{\bb{T}}}``\dot{\gamma}_\xi \in \dot{C}_\xi"$.

	Suppose this is not the case. Find $u \leq (s^*, \dot{p}^* \restriction \xi, \dot{t})$ in $\bb{U}_\xi$ such that:
	\begin{enumerate}
		\item{$u \Vdash ``\dot{\gamma}_\xi \not\in \dot{C}_\xi"$.}
		\item{For every $\alpha_\xi \leq \alpha < \beta$, there is $\gamma^\xi_\alpha < \lambda$ such that $u \Vdash ``\max(\dot{p}_\alpha(\xi)) = \gamma^\xi_\alpha"$.}
	\end{enumerate}
	This is possible due to our inductive assumption that $\bb{U}_\xi$ is a dense, $\lambda$-closed subset of $\bb{S} * \dot{\bb{P}}_\xi * \dot{\bb{T}}$. Thus, for every $\alpha_\xi \leq \alpha < \beta$, $u \Vdash``\gamma^\xi_\alpha \in \dot{C}_\xi"$. Also, $u \Vdash ``\dot{\gamma}_\xi = \sup(\{\gamma^\xi_\alpha \mid \alpha_\xi \leq \alpha < \beta \})$ and $\dot{C}_\xi$ is club in $\lambda"$. Thus, $u \Vdash ``\dot{\gamma}_\xi \in \dot{C}_\xi"$, which is a contradiction. We have thus completed the construction of $(s^*, \dot{p}^*, \dot{t}^*)$ and the proof that $\bb{U}_\zeta$ is $\lambda$-closed.

	We now prove that $\bb{U}_\zeta$ is dense in $\bb{S}*\dot{\bb{P}}_\zeta * \dot{\bb{T}}$. Let $(s, \dot{p}, \dot{t}) \in \bb{S}*\dot{\bb{P}}_\zeta * \dot{\bb{T}}$. Since $\bb{S}$ is $\lambda$-distributive, we may assume that $s$ decides the value of $\dom(\dot{p})$. Suppose first that $\zeta$ is a successor ordinal, and let $\zeta = \xi + 1$. If $\xi \not\in \dom(\dot{p})$, then we can find $u \leq (s, \dot{p}, \dot{t})$ in $\bb{U}_\xi$, and we are done. Thus, suppose $\xi \in \dom(\dot{p})$. Find $(s^*, \dot{p}', \dot{t}^*) \leq (s, \dot{p} \restriction \xi, \dot{t})$ such that:
	\begin{enumerate}
		\item{$(s^*, \dot{p}', \dot{t}^*) \in \bb{U}_\xi$.}
		\item{There is $c \in V$ such that $(s^*, \dot{p}') \Vdash ``\dot{p}(\xi) = c."$}
		\item{There is $\gamma \geq \max(c)$ such that $(s^*, \dot{p}', \dot{t}^*) \Vdash ``\gamma \in \dot{C}_\xi"$.}
	\end{enumerate}
	This is possible, because $\bb{U}_\xi$ is a dense, $\lambda$-closed subset of $\bb{S} * \dot{\bb{P}}_\xi * \dot{\bb{T}}$. Now, define $\dot{p}^* \in \dot{\bb{P}}_\zeta$ by letting $\dot{p}^* \restriction \xi = \dot{p}'$ and letting $\dot{p}^*(\xi)$ be a name forced by $(s^*, \dot{p}')$ to be equal to $\dot{p}(\xi) \cup \{\gamma \}$. It is easy to see that $(s^*, \dot{p}^*, \dot{t}^*) \leq (s, \dot{p}, \dot{t})$ and $(s^*, \dot{p}^*, \dot{t}^*) \in \bb{U}_\zeta$.

	Finally, suppose $\zeta$ is a limit ordinal. If $\cf(\zeta) = \lambda$, then $\dom(\dot{p})$ is bounded below $\zeta$ and we are done by the inductive hypothesis. Thus, assume that $\kappa := \cf(\zeta) < \lambda$. Let $\langle \zeta_i \mid i < \kappa \rangle$ be an increasing, continuous sequence of ordinals cofinal in $\zeta$. We will construct a sequence $\langle (s_i, \dot{p}_i, \dot{t}_i) \mid i < \kappa \rangle$ such that:
	\begin{enumerate}
		\item{For every $i < \kappa$, $(s_i, \dot{p}_i, \dot{t}_i) \in \bb{U}_{\zeta_i}$.}
		\item{For every $i < j < \kappa$, $(s_j, \dot{p}_j, \dot{t}_j) \leq (s_i, \dot{p}_i, \dot{t}_i)$.}
		\item{For every $i < \kappa$, $(s_i, \dot{p}_i {^\frown} \dot{p} \restriction [\zeta_i, \zeta), \dot{t}_i) \leq (s, \dot{p}, \dot{t})$.}
	\end{enumerate}
	The construction is straightforward, by recursion on $i$. Let $(s_0, \dot{p}_0, \dot{t}_0) \leq (s, \dot{p} \restriction \zeta_0, \dot{t})$ be in $\bb{U}_{\zeta_0}$. If $i = k + 1$, let $(s_i, \dot{p}_i, \dot{t}_i) \leq (s_k, \dot{p}_k {^\frown} \dot{p} \restriction [\zeta_k, \zeta_i), \dot{t}_k)$ be in $\bb{U}_{\zeta_i}$. If $i$ is a limit ordinal, note that $\langle (s_k, \dot{p}_k, \dot{t}_k) \mid k < i \rangle$ is a decreasing sequence of conditions in $\bb{U}_{\zeta_i}$ and thus has a lower bound in $\bb{U}_{\zeta_i}$. Let $(s_i, \dot{p}_i, \dot{t}_i)$ be such a lower bound. Finally, at the end of the construction, $\langle (s_i, \dot{p}_i, \dot{t}_i) \mid i < \kappa \rangle$ is a decreasing sequence of conditions in $\bb{U}_\zeta$, so, by the $\lambda$-closure of $\bb{U}_\zeta$, it has a lower bound in $\bb{U}_\zeta$. Let $(s^*, \dot{p}^*, \dot{t}^*)$ be such a lower bound. It is easily verified that $(s^*, \dot{p}^*, \dot{t}^*) \leq (s, \dot{p}, \dot{t})$, thus completing the proof.
\end{proof}

\section{Dense bounded stationary reflection} \label{densesect}

In this section, we construct a model in which there are singular cardinals $\delta < \mu$ such that, letting $\lambda = \mu^+$, $\mathrm{Refl}(\lambda)$ holds, and, for every stationary $S \subset S^\lambda_{<\delta}$, there is a stationary $T \subseteq S$ such that $T$ does not reflect at any ordinal in $S^\lambda_{>\delta}$. In our model, we arrange so that $\mu = \aleph_{\omega \cdot 2}$ and $\delta = \aleph_\omega$, though the technique is quite flexible.

\begin{theorem}
	Suppose there is a sequence of supercompact cardinals of order type $\omega \cdot 2$. Then there is a forcing extension in which $\mathrm{Refl}(\aleph_{\omega \cdot 2 + 1})$ holds and, for every stationary $S \subseteq S^{\aleph_{\omega \cdot 2 + 1}}_{<\aleph_\omega}$, there is a stationary $T\subseteq S$ such that $T$ does not reflect at any ordinals in $S^{\aleph_{\omega \cdot 2 + 1}}_{>\aleph_\omega}$.
\end{theorem}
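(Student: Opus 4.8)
The plan is to realize $\delta = \aleph_\omega$ and $\mu = \aleph_{\omega\cdot 2}$ as limits of Levy-collapse images of the given supercompacts, to force $AP_\mu$, and then to run a length-$\lambda^+$ iteration that attaches, to every stationary $S \subseteq S^\lambda_{<\aleph_\omega}$, a bounded-non-reflecting stationary subset via $\bb{S}_{S, \aleph_{\omega+1}}$, all the while keeping enough supercompactness liftable to force $\mathrm{Refl}(\lambda)$. For the setup, let $\langle \kappa_i \mid i < \omega\cdot 2 \rangle$ enumerate the supercompacts and let $\bb{C}$ be a Levy collapse arranged so that $\kappa_i \mapsto \aleph_{i+1}$ for $i < \omega$ (hence $\sup_{i<\omega}\kappa_i \mapsto \aleph_\omega = \delta$) and $\kappa_{\omega + n} \mapsto \aleph_{\omega + n + 1}$ for $n < \omega$ (hence the supremum maps to $\aleph_{\omega \cdot 2} = \mu$, and $\lambda = \mu^+ = \aleph_{\omega\cdot 2+1}$). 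Arrange the ground-model GCH above the supercompacts so that $\lambda^{<\lambda} = \lambda$ and $2^\lambda = \lambda^+$ in $V^{\bb{C}}$. Finally force $AP_\mu$ with $\bb{A}_{\vec{a}}$; being strongly $(<\lambda)$-strategically closed it adds no bounded subsets of $\lambda$, and afterwards $\lambda \in I[\lambda]$, so by normality \emph{every} subset of $\lambda$ lies in $I[\lambda]$, which is exactly what makes Lemma \ref{genstat} applicable.

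Over this model, define a $(<\lambda)$-support iteration $\langle \bb{P}_\xi, \dot{\bb{Q}}_\zeta \mid \xi \leq \lambda^+, \zeta < \lambda^+ \rangle$ with bookkeeping: at stage $\zeta$ the bookkeeping hands us a $\bb{P}_\zeta$-name $\dot{S}_\zeta$, and if it is forced that $\dot{S}_\zeta$ is a stationary subset of $S^\lambda_{<\delta}$ we set $\dot{\bb{Q}}_\zeta = \bb{S}_{\dot{S}_\zeta, \aleph_{\omega+1}}$, adding $\dot{T}_\zeta \subseteq \dot{S}_\zeta$; otherwise $\dot{\bb{Q}}_\zeta$ is trivial. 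Each iterand is $\aleph_{\omega+1}$-directed closed and $(<\lambda)$-strategically closed, so $\bb{P} := \bb{P}_{\lambda^+}$ is $(<\lambda)$-strategically closed (in particular $\lambda$-distributive), and, since each iterand has size $\lambda$ and $\lambda^{<\lambda} = \lambda$, a $\Delta$-system argument shows $\bb{P}$ is $\lambda^+$-c.c.; hence no cardinals are collapsed. Since $\bb{P}$ is $\lambda$-distributive, $AP_\mu$ persists to every $V_\zeta := V^{\bb{C} * \bb{A}_{\vec{a}} * \bb{P}_\zeta}$, so at each stage $\dot{S}_\zeta \in I[\lambda]$ and Lemma \ref{genstat} makes $\dot{T}_\zeta$ stationary in $V_{\zeta+1}$.

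For density and preservation, note first that by the $\lambda^+$-c.c. every stationary $S \subseteq S^\lambda_{<\delta}$ of the final model has a name appearing by some stage $<\lambda^+$, and $S$ is already stationary there (otherwise a disjoint club would survive to the end); standard bookkeeping then guarantees some $\dot{\bb{Q}}_\zeta$ attaches a $T_\zeta \subseteq S$. It remains to check that $T_\zeta$ stays stationary and keeps failing to reflect on $S^\lambda_{>\delta}$. Non-reflection survives because the tail $\bb{P}_{\zeta+1, \lambda^+}$ is $\lambda$-distributive: for each $\beta \in S^\lambda_{>\delta}$ it adds no new subset of $\beta$, so the club through $\beta \setminus T_\zeta$ furnished by $\bb{S}_{S, \aleph_{\omega+1}}$ remains a club disjoint from $T_\zeta$. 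Stationarity survives because the tail is $\aleph_{\omega+1}$-closed and $T_\zeta \subseteq S^\lambda_{<\aleph_{\omega+1}}$, so Fact \ref{apstat} applies using $AP_\mu$. This is precisely the step that relies heavily on approachability.

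For $\mathrm{Refl}(\lambda)$, fix a stationary $S \subseteq \lambda$ in the final model; refining over the $<\mu$ many regular cofinalities we may assume $S \subseteq S^\lambda_\rho$ for a single regular $\rho < \mu$, and we may fix $\zeta < \lambda^+$ with $S \in V_\zeta$ ($S$ is stationary in $V_\zeta$, else a disjoint club would survive). Let $\kappa$ be the $\kappa_i$ whose collapse image is the least $\aleph$ above $\rho$, and let $j \colon V \to M$ be a $\lambda$-supercompactness embedding with $\mathrm{crit}(j) = \kappa$. The heart of the argument is to lift $j$ to $V_\zeta$: the collapse below $\kappa$ is handled by Fact \ref{lift}, while everything done above $\kappa$ — the higher collapses together with $\bb{A}_{\vec{a}} * \bb{P}_\zeta$ — is $\kappa$-closed and of size $< j(\kappa)$, so it is absorbed into the larger collapse and a generic for the quotient is built from the closure of $M$. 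A standard computation then shows $j(S)$ reflects at $\sup j[\lambda]$, whose cofinality is below $j(\kappa)$, so by elementarity $S$ reflects in $V_\zeta$ at some $\alpha$ with $\cf(\alpha) < \kappa$; since the tail is $\lambda$-distributive, $S$ still reflects at $\alpha$ in the final model. Crucially, when $\rho < \delta$ the reflection point has cofinality $<\delta$, so this is fully consistent with the bounded-non-reflecting sets built above (in particular with the $T_\zeta$ themselves). The main obstacle is exactly this lifting: reconciling the incompactness produced by the dense $\bb{S}_{S, \aleph_{\omega+1}}$-iteration with the compactness needed for $\mathrm{Refl}(\lambda)$, by verifying that the iteration — strategically closed and small relative to $j(\kappa)$ — is absorbed into the Levy collapse and that the resulting reflection always lands at cofinality below the critical image.
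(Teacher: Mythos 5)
Your setup (the Levy collapses, forcing $AP_\mu$ with $\bb{A}_{\vec{a}}$, then a $\lambda^+$-length bookkeeping iteration of the posets $\bb{S}_{\dot{S}_\zeta, \aleph_{\omega+1}}$) matches the paper's construction up to the point where the iteration is defined, and your argument for stationary $S \subseteq S^\lambda_\rho$ with $\rho < \aleph_\omega$ is essentially the paper's: there $\mathrm{crit}(j) \leq \aleph_\omega$ in the collapsed numbering, so $j(\aleph_{\omega+1}) > \lambda^+$ and a lower bound for the pointwise image of the generic exists by directed closure. The genuine gap is the case $\aleph_\omega < \rho < \aleph_{\omega \cdot 2}$, which you dismiss with the claim that ``everything done above $\kappa$ \dots is $\kappa$-closed.'' That claim is false: each iterand is only $\aleph_{\omega+1}$-directed closed, and when $\mathrm{crit}(j) = \kappa > \aleph_{\omega+1}$ we have $j(\aleph_{\omega+1}) = \aleph_{\omega+1}$, so $j(\bb{A} * \bb{P}_\zeta)$ has no closure beyond $\aleph_{\omega+1}$. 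Strategic closure does give the absorption via Fact \ref{lift}, but absorption is not the problem; lifting $j$ through $\bb{A} * \bb{P}_\zeta$ itself requires a master condition, i.e.\ a lower bound in $j(\bb{A} * \bb{P}_\zeta)$ for $\{j(p) \mid p \in G(\bb{A} * \bb{P}_\zeta)\}$, and such a lower bound provably does not exist. Indeed, fix an earlier stage $\zeta' < \zeta$ at which the generic added a stationary $S_{\zeta'} \subseteq S^\lambda_{<\aleph_\omega}$ (your bookkeeping ensures there are many, and by Fact \ref{apstat} these remain stationary in every relevant outer model, since the collapse quotient is $\kappa_{i_0+1}$-closed and $AP_\mu$ holds). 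Every point of $S_{\zeta'}$ has cofinality $< \mathrm{crit}(j)$, so $j``S_{\zeta'}$ is stationary in $\eta := \sup j``\lambda$, and $\cf(\eta) \geq \kappa_{i_0+1} > \aleph_{\omega+1}$ in the model where the master condition must live. Any lower bound for the image of the generic would, at coordinate $j(\zeta')$, be a condition of $j(\bb{S}_{\dot{S}_{\zeta'}, \aleph_{\omega+1}})$ whose domain reaches $\eta$ and whose set of $1$'s contains $j``S_{\zeta'}$, hence is stationary below $\eta$ --- contradicting clause (3) in the definition of the poset, which forbids the set of $1$'s from being stationary below any point of cofinality $\geq \aleph_{\omega+1}$. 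So reflection for sets concentrating on cofinalities in $(\aleph_\omega, \aleph_{\omega\cdot 2})$ is left unproven, and no refinement of your lifting can prove it as long as the $S_{\zeta'}$ remain stationary in the model over which you lift.

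This obstruction is precisely what the paper's extra machinery exists to overcome, and it is why the paper's iteration is not just a chain of $\bb{S}$-type posets. The paper alternates: even stages add the non-reflecting stationary sets, while odd stages shoot clubs through complements of subsets of $S^\lambda_{>\aleph_\omega}$ that are forced non-stationary by the auxiliary product $\bb{U} = \prod_\eta CU(S_\eta)$; the odd-stage bookkeeping guarantees that, in the final model, any stationary $T \subseteq S^\lambda_{>\aleph_\omega}$ is \emph{not} forced non-stationary by $\bb{U}$. Given such a $T$, one first forces with $\bb{U}$ below a condition keeping $T$ stationary --- thereby destroying all the sets $S_\zeta$ --- and only then lifts $j$. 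The master condition now exists because $\bb{S} * \dot{\bb{U}}$ has a dense $\lambda$-directed-closed subset whose conditions carry, alongside each stationary-set part, a closed bounded set of the same height disjoint from it; taking coordinatewise unions of $j$-images and placing the top point $\eta$ into the \emph{club} part rather than the stationary part yields a legitimate condition sealing non-reflection at $\eta$. To repair your proposal you would have to add this destruction component (both the interleaved odd stages and the product $\bb{U}$) and prove the corresponding density and closure lemma for the combined poset, as the paper does; your simpler iteration cannot get past exactly the case you wave through.
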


\begin{proof}
	Assume GCH. Let $\langle \kappa_i \mid i \leq \omega \cdot 2 + 1 \rangle$ be an increasing, continuous sequence of cardinals such that:
	\begin{itemize}
		\item{$\kappa_0 = \omega$.}
		\item{If $i$ is $0$ or a successor ordinal, then $\kappa_{i+1}$ is supercompact.}
		\item{If $i$ is a limit ordinal, then $\kappa_{i+1} = \kappa_i^+$.}
	\end{itemize}
	For ease of notation, let $\lambda$ denote $\kappa_{\omega \cdot 2 + 1}$, $\mu = \kappa_{\omega \cdot 2}$, and $\delta = \kappa_\omega$. The reason for our numbering is that, in the final extension, we will have $\kappa_i = \aleph_i$ for all $i \leq \omega \cdot 2 + 1$.

	Let $\langle \bb{P}_i, \dot{\bb{Q}}_j \mid i \leq \omega \cdot 2, j < \omega \cdot 2 \rangle$ be a forcing iteration, taken with full supports, in which, if $i < \omega \cdot 2$ and $i \not= \omega$, then $\Vdash_{\bb{P}_i} ``\dot{\bb{Q}}_i = \dot{\mathrm{Coll}}(\kappa_i, < \kappa_{i + 1})"$ and, if $i = \omega$, $\dot{Q}_i$ is a $\bb{P}_i$ name for trivial forcing. Let $\bb{P} = \bb{P}_{\omega \cdot 2}$. Standard arguments (see, e.g. \cite{reflection}) show that, if $G$ is $\bb{P}$-generic over $V$, then, for all $i \leq \omega \cdot 2 + 1$, $\kappa_i = (\aleph_i)^{V[G]}$. Let $\dot{\vec{a}}$ be a $\bb{P}$-name for an enumeration, of order type $\lambda$, of all bounded subsets of $\lambda$, and let $\dot{\bb{A}}$ be a $\bb{P}$-name for $\bb{A}_{\dot{\vec{a}}}$, the forcing poset to shoot a club through the set of ordinals below $\lambda$ that are approachable with respect to $\dot{\vec{a}}$.

	Denote $V^{\bb{P}*\dot{\bb{A}}}$ by $V_1$. In $V_1$, we will define a forcing iteration $\langle \bb{S}_\xi, \dot{\bb{T}}_\zeta \mid \xi \leq \lambda^+, \zeta < \lambda^+ \rangle$. The iteration will use supports of size $\leq \mu$. For $\zeta \leq \lambda^+$, let $A_\zeta = \{\eta < \zeta \mid \eta$ is even\}. The definition of $\dot{\bb{T}}_\zeta$ will depend on whether $\zeta$ is even or odd. If $\zeta < \lambda^+$ is even (including limit ordinals), then choose an $\bb{S}_\zeta$-name $\dot{T}_\zeta$ for a stationary subset of $S^\lambda_{<\delta}$, and let $\dot{\bb{T}}_\zeta$ be an $\bb{S}_\zeta$-name for $\bb{S}_{\dot{T}_\zeta, \delta^+}$, i.e. the forcing to add a subset of $\dot{T}_\zeta$ that does not reflect at any points in $S^\lambda_{> \delta}$. Let $\dot{S}_\zeta$ be an $\bb{S}_{\zeta + 1}$-name for this subset of $\dot{T}_\zeta$, and let $S_\zeta$ denote its realization in $V_1^{\bb{S}_{\zeta + 1}}$. 

	If $\zeta \leq \lambda^+$, then, in $V_1^{\bb{S}_\zeta}$, let $\bb{U}_\zeta$ be the product $\prod_{\eta \in A_\zeta} CU(S_\eta)$, where the product is taken with supports of size $\leq \mu$. If $\zeta < \lambda^+$ is odd, we will choose an $\bb{S}_\zeta$-name $\dot{T}_\zeta$ for a subset of $S^\lambda_{>\delta}$ such that $\Vdash_{\bb{S}_\zeta * \dot{\bb{U}}_\zeta}``\dot{T}_\zeta$ is non-stationary," and let $\dot{\bb{T}}_\zeta$ be an $\bb{S}_\zeta$-name for $CU(\dot{T}_\zeta).$ Also, fix an $\bb{S}_\zeta * \dot{\bb{U}}_\zeta$-name $\dot{C}_\zeta$ for a club in $\lambda$ disjoint from $\dot{T}_\zeta$.

	Before we discuss the choice of the name $\dot{T}_\zeta$, we describe some of the properties of $\bb{S} := \bb{S}_{\lambda^+}.$ First note that, by a standard $\Delta$-system argument, $\bb{S}$ has the $\lambda^+$-c.c. Also, $\bb{S}$ is easily seen to be $\delta^+$-directed closed. We also claim that it is $\lambda$-distributive. To show this, we define another poset. In $V_1$, for all $\xi \leq \lambda^+$, let $\bb{V}_\xi$ be the set of $(s, u)$ such that:
	\begin{itemize}
		\item{$s \in \bb{S}_\xi$.}
		\item{$u$ is a function and $\dom(u) = \dom(s) \cap A_\xi$.}
		\item{For all $\zeta \in \dom(u)$, $u(\zeta)$ is a closed, bounded subset of $\lambda$ such that $s \restriction (\zeta + 1) \Vdash ``u(\zeta) \cap \dot{S}_\zeta = \emptyset"$ and, if $\alpha \in \mathrm{nacc}(u(\zeta))$, then $\cf(\alpha) = \omega$.}
	\end{itemize} 
	If $(s_0, u_0), (s_1, u_1) \in \bb{V}_\xi$, we let $(s_1, u_1) \leq (s_0, u_0)$ iff $s_1 \leq s_0$ in $\bb{S}_\xi$ and, for every $\zeta \in \dom(u_0)$, $u_1(\zeta)$ end-extends $u_0(\zeta)$. If $\zeta < \xi \leq \lambda^+$, the map $(s, u) \mapsto (s \restriction \zeta, u \restriction \zeta)$ defines a projection from $\bb{V}_\xi$ to $\bb{V}_\zeta$.

	\begin{lemma}
		For all $\xi \leq \lambda^+$,
		\begin{enumerate}
			\item{For every $(s,u) \in \bb{V}_\xi$, there is $(s^*, u^*) \leq (s, u)$ such that:
			\begin{enumerate}
				\item{For every $\zeta \in \dom(s^*)$, there is $s_\zeta \in V_1$ such that $s^* \restriction \zeta \Vdash ``s^*(\zeta) = s_\zeta."$}
				\item{For every $\zeta \in \dom(s^*) \cap A_\zeta$, $\gamma_{s^*(\zeta)} = \max(u(\zeta))$}
				\item{For every $\zeta \in \dom(s^*) \setminus A_\zeta$, $(s^* \restriction \zeta, u^* \restriction \zeta) \Vdash_{\bb{V}_\zeta}``\max(s^*(\zeta)) \in \dot{C}_\zeta."$ (Note that this will make sense if clause (3) below holds at $\zeta$.)}
			\end{enumerate}}
			\item{$\bb{S}_\xi$ is $\lambda$-distributive.}
			\item{$\bb{V}_\xi$ is isomorphic to a dense subset of $\bb{S}_\xi * \dot{\bb{U}}_\xi$.}
		\end{enumerate}
	\end{lemma}

	\begin{proof}
		We prove all three statements simultaneously by induction on $\xi.$ First note that, to show (3), it suffices to show that, for every $(s, \dot{u}) \in \bb{S}_\xi * \dot{\bb{U}}_\xi$, there is $s^* \leq s$ and $u^* \in V_1$ such that $s^* \Vdash ``\dot{u} = u^*"$. This is easily implied by (2), as $\dot{u}$ can be thought of as a name for a set of pairs of ordinals of size $<\lambda$. Also note that the set of $(s^*, u^*) \in \bb{V}_\xi$ as given in the conclusion of (1) is easily seen to be $\lambda$-directed closed so, since $(s, u) \mapsto s$ is a projection from $\bb{V}_\xi$ to $\bb{S}_\xi$, (1) implies (2) for a fixed $\xi \leq \lambda^+$.

		Fix $\xi \leq \lambda^+$. Assume we have proven all three statements for all $\zeta < \xi$. We prove (1) for $\xi$. Assume first that $\xi$ is a successor ordinal, with $\xi = \zeta + 1$ and $\zeta$ odd. Let $(s, u) \in \bb{V}_\xi$. If $\zeta \not\in \dom(s)$, then we are done by (1) for $\zeta$. Thus, assume $\zeta \in \dom(s)$. Since $\bb{S}_\zeta$ is $\lambda$-distributive, we can find $s' \leq s \restriction \zeta$ and $s_\zeta \in V_1$ such that $s' \Vdash ``s(\zeta) = s_\zeta."$. Now find $(\bar{s}, \bar{u}) \leq (s', u)$ in $\bb{V}_\zeta$ and $\alpha > \max(s_\zeta)$ such that $\cf(\alpha) = \omega$ and $(\bar{s}, \bar{u}) \Vdash ``\alpha \in \dot{C}_\zeta."$ Form $(s^*, u^*) \leq (s, u)$ by letting $(s^* \restriction \zeta, u^*) \leq (\bar{s}, \bar{u})$ witness (1) for $\zeta$ and by letting $s^*(\zeta)$ be a name forced by $s^* \restriction \zeta$ to be equal to $s_\zeta \cup \{\alpha\}.$ It is easily verified that $(s^*, u^*)$ is as desired.

		Next, suppose that $\xi = \zeta + 1$ and $\zeta$ is even. Let $(s, u) \in \bb{V}_\xi$, and again assume that $\zeta \in \dom(s).$ Find $s' \leq s \restriction \zeta$ and $s_\zeta \in V_1$ such that $s' \Vdash ``s(\zeta) = s_\zeta."$ Find $\alpha$ with $\max(u(\zeta)), \gamma_{s_\zeta} < \alpha < \lambda$ and $\cf(\alpha) = \omega$. Form $(s^*, u^*) \leq (s, u)$ by letting $(s^* \restriction \zeta, u^* \restriction \zeta) \leq (s', u \restriction \zeta)$ witness (1) for $\zeta$, letting $s^*(\zeta)$ be a name forced by $s^* \restriction \zeta$ to be a function in ${^{\alpha + 1}}2$ such that $s^*(\zeta) \restriction (\gamma_{s_\zeta} + 1) = s_\zeta$ and $s^*(\zeta)(\beta) = 0$ for all $\beta \in (\gamma_{s_\zeta}, \alpha]$, and letting $u^*(\zeta) = u \cup \{\alpha\}$.

		Finally, suppose $\xi$ is a limit ordinal, and let $(s, u) \in \bb{V}_\xi$. If $\dom(s)$ is bounded below $\xi$ (in particular, if $\cf(\xi) \geq \lambda$), then we are done by the induction hypothesis. Thus, assume $\cf(\xi) < \lambda$ and $\dom(s)$ is unbounded in $\xi$. Let $\langle \xi_i \mid i < \cf(\xi) \rangle$ be an increasing, continuous sequence of ordinals cofinal in $\xi$. Form a sequence of conditions $\langle (s_i, u_i) \mid i < \cf(\xi) \rangle$ such that:
		\begin{itemize}
			\item{For all $i < \cf(\xi)$, $(s_i, u_i) \in \bb{V}_{\xi_i}$ and $(s_i, u_i) \leq (s \restriction \xi_i, u \restriction \xi_i)$.}
			\item{For all $i < k < \cf(\xi)$, $(s_k \restriction \xi_i, u_k \restriction \xi_i) \leq (s_i, u_i)$.}
			\item{For all $i < \cf(\xi)$, $(s_i, u_i)$ witnesses (1) for $\xi_i$.}
		\end{itemize}
		The construction is a straightforward recursion; we omit the details. At the end of the construction, let $X = \bigcup_{i < \cf(\xi)} \dom(s_i)$. For $\zeta \in X$, let $i_\zeta$ be the least $i$ such that $\zeta \in \dom(s_i)$. For $i_\zeta \leq i < \cf(\xi)$, let $\alpha_{\zeta, i}$ be such that, if $\zeta$ is even, then $s_i \restriction \zeta \Vdash ``\gamma_{s_i(\zeta)} = \alpha_{\zeta, i}"$ and, if $\zeta$ is odd, then $s_i \restriction \zeta \Vdash ``\max(s_i(\zeta)) = \alpha_{\zeta, i}."$ Let $\alpha_\zeta = \sup(\{\alpha_{\zeta, i} \mid i_\zeta \leq i < \cf(\xi)\})$. Form $(s^*, u^*)$ as follows. $\dom(s^*) = X$. For $\zeta \in X$, let $s^*(\zeta)$ be a name forced by $s^* \restriction \zeta$ to be equal to $\{(\alpha_\zeta, 0)\} \cup \bigcup_{i_\zeta \leq i < \cf(\xi)} s_i(\zeta)$ if $\zeta$ is even and $\{\alpha_\zeta\} \cup \bigcup_{i_\zeta \leq i < \cf(\xi)} s_i(\zeta)$ if $\zeta$ is odd. For $\zeta \in A_\xi \cap X$, let $u^*(\zeta) = \{\alpha_\zeta\} \cup \bigcup_{i_\zeta \leq i < \cf(\xi)} u_i(\zeta).$ $(s^*, u^*)$ is easily seen to be as required by (1). 
	\end{proof}

	Note that the above proof also shows that, in $V^{\bb{P}*\dot{\bb{A}}}$, $\bb{S} * \dot{\bb{U}}$ has a dense, $\lambda$-directed-closed subset.

	Since, in $V_1$, $\bb{S}$ has the $\lambda^+$-c.c., we can, by standard bookkeeping arguments, assume that we chose our names $\dot{T}_\zeta$ so that, in $V_1^{\bb{S}}$, every stationary subset of $S^\lambda_{<\delta}$ was considered as $\dot{T}_\zeta$ for some even $\zeta < \lambda^+$ and every subset of $S^\lambda_{>\delta}$ was considered as $\dot{T}_\zeta$ for cofinally many odd $\zeta < \lambda^+$. In $V_1^{\bb{S}}$, let $A = A_{\lambda^+}$ and $\bb{U} = \bb{U}_{\lambda^+}$. By the distributivity of $\bb{S}$, all conditions of $\bb{U}$ are in $V_1$. 

	Let $G$ be $\bb{P}$-generic over $V$, let $H$ be $\bb{A}$-generic over $V[G]$, and let $I$ be $\bb{S}$-generic over $V[G*H]$. $V[G*H*I]$ will be our final model. For $i < \omega \cdot 2$, let $G_i$ be the $\bb{P}_i$-generic filter induced by $G$, and, for $\zeta < \lambda^+$, let $I_\zeta$ be the $\bb{S}_\zeta$-generic filter induced by $I$. Note that, in $V[G*H]$ (and hence in all further extensions preserving $\lambda$), $AP_\mu$ holds. Thus, by Lemma \ref{genstat}, for all even $\zeta < \lambda^+$, $S_\zeta$ (the stationary set added by $\bb{T}_\zeta$) is stationary in $V[G*H*I_{\zeta + 1}]$. Because the remainder of the iteration is $\delta^+$-closed and $S \subseteq S^\lambda_{<\delta}$, $S_\zeta$ is stationary in $V[G*H*I]$ by Fact \ref{apstat}. We have therefore arranged so that, in $V[G*H*I]$, for every stationary $T\subseteq S^\lambda_{<\delta}$, there is a stationary $S\subseteq T$ that does not reflect at any ordinals in $S^\lambda_{>\delta}$. Also, suppose that, in $V[G*H*I]$, $T \subseteq S^\lambda_{>\delta}$ and $\Vdash_{\bb{U}}``T$ is non-stationary.$"$ By standard chain condition arguments, there is $\xi < \lambda^+$ such that $T \in V[G*H*I_\xi]$ and, in $V[G*H*I_\xi]$, $\Vdash_{\bb{U}_\xi}``T$ is non-stationary.$"$ Thus, by our bookkeeping for the choice of $\dot{T}_\zeta$ at odd $\zeta$, we have that $T$ is already non-stationary in $V[G*H*I]$.

	We now argue that $\mathrm{Refl}(\lambda)$ holds in $V[G*H*I]$. So, let $T\subseteq \lambda$ be stationary in $V[G*H*I]$. We can assume, by shrinking $T$ if necessary, that there is $i_0 < \omega \cdot 2$ such that $T \subseteq S^\lambda_{\kappa_{i_0}}$. We first consider the case in which $i_0 < \omega$. Let $i^* = i_0 + 2$. In $V[G_{i_0+1}]$, $\kappa_{i^*}$ remains supercompact. Fix an elementary embedding $j:V[G_{i_0+1}] \rightarrow M[G_{i_0+1}]$ witnessing that $\kappa_{i^*}$ is $\lambda^+$-supercompact. In $V[G_{i_0+1}]$, $\bb{Q}_{i_0+1}$ = $\mathrm{Coll}(\kappa_{i_0 + 1}, < \kappa_{i^*})$ and $j(\bb{Q}_{i_0+1}) = \mathrm{Coll}(\kappa_{i_0+1}, < j(\kappa_{i^*}))$. Since $\bb{P}_{i^*, \omega \cdot 2} * \dot{\bb{A}} * \dot{\bb{S}}$ is strongly $\kappa_{i_0 + 1}$-strategically closed, we can apply Fact \ref{lift} to observe that $j(\bb{Q}_{i_0+1}) \cong \bb{P}_{i_0+1, \omega \cdot 2} * \dot{\bb{A}} * \dot{\bb{S}} * \dot{\bb{R}}$, where $\dot{\bb{R}}$ is forced to be $\kappa_{i_0+1}$-closed. Thus, letting $J$ be $\bb{R}$-generic over $V[G*H*I]$, we can extend $j$ to $j:V[G_{i^*}] \rightarrow M[G*H*I*J]$.

	We would like to extend $j$ further to have domain $V[G*H*I]$. To do this, we define a master condition $(p^*, \dot{a}^*, \dot{s}^*) \in j(\bb{P}_{i^*, \omega \cdot 2} * \dot{\bb{A}} * \dot{\bb{S}})$ in $M[G*H*I*J]$, i.e. a condition $(p^*, \dot{a}^*, \dot{s}^*)$ such that, for all $(p, \dot{a}, \dot{s}) \in G_{i^*, \omega \cdot 2} * H * I$, $(p^*, \dot{a}^*, \dot{s}^*) \leq j((p, \dot{a}, \dot{s}))$. The definition is straightforward. Let $\eta = \sup(j``\lambda)$. For $i^* \leq i < \omega \cdot 2$, we let $p^*(i)$ be a name for $\bigcup_{p \in G} j(p(i))$. $p^* \in j(\bb{P}_{i^*, \omega \cdot 2})$ by the fact that $j(\bb{P}_{i^*, \omega \cdot 2})$ is $j(\kappa_{i^*})$-directed closed. We then define $\dot{a}^*$ to be a name forced by $p^*$ to be equal to $\{\eta \} \cup \bigcup_{a \in H} j(a)$. The only thing to check here is that $\eta$ is forced to be approachable with respect to $j(\vec{a})$. The proof of this can be found, among other places, in \cite{reflection}. Finally, since $j(\dot{\bb{S}})$ is forced to be $j(\delta^+)$-directed-closed, it is straightforward to find a name $\dot{s}^*$ forced by $(p^*, \dot{a}^*)$ to be a lower bound for $\{ j(\dot{s}) \mid \dot{s} \in I \}$. $(p^*, \dot{a}^*, \dot{s}^*)$ is then as desired, and, letting $G^+ * H^+ * I^+$ be $j(\bb{P}_{i^*, \omega \cdot 2} * \dot{\bb{A}} * \dot{\bb{S}})$-generic over $V[G*H*I*J]$ with $(p^*, \dot{a}^*, \dot{s}^*) \in G^+ * H^+ * I^+$, we can extend $j$ to $j:V[G*H*I] \rightarrow M[G*H*I*J*G^+*H^+*I^+]$. Now, by standard arguments (see e.g. Proposition 1.1 in \cite{reflection}), if $T$ does not reflect in $V[G*H*I]$, then $j``T$ is non-stationary in $\eta$ in $M[G*H*I*J*G^+*H^+*I^+]$, which further implies that $T$ is non-stationary $V[G*H*I*J*G^+*H^+*I^+]$. Since $G^+*H^+*I^+$ is generic for $<j(\kappa_{i^*})$-strategically-closed forcing, it could not have added any new subsets of $\lambda$, thus $T$ is already non-stationary in $V[G*H*I*J]$. However, since $J$ is generic for $\kappa_{i^*}$-closed forcing, $T\subseteq S^\lambda_{<\kappa_{i^*}}$, and $AP_\mu$ holds in $V[G*H*I]$, Fact \ref{apstat} implies that $T$ is non-stationary in $V[G*H*I]$, which is a contradiction. Thus, $T$ reflects in $V[G*H*I]$.

	Next, suppose $i_0 > \omega$. Again, let $i^* = i_0 + 2$ and let $j:V[G_{i_0 + 1}] \rightarrow M[G_{i_0 + 1}]$ witness that $i^*$ is $\lambda^+$-supercompact. $\bb{P}_{i^*, \omega \cdot 2} * \dot{\bb{A}} * \dot{\bb{S}} * \dot{\bb{U}}$ has a dense strongly $\kappa_{i_0 + 1}$-strategically closed subset, so, again applying Fact \ref{lift}, $j(\bb{Q}_{i_0+1}) \cong \bb{P}_{i^*, \omega \cdot 2} * \dot{\bb{A}} * \dot{\bb{S}} * \dot{\bb{U}} * \dot{\bb{R}}$, where $\dot{\bb{R}}$ is fored to be $\kappa_{i_0 + 1}$-closed. Note that, by previous arguments, it is not the case that, in $V[G*H*I]$, $\Vdash_{\bb{U}}``T$ is non-stationary$"$. Thus, letting $J$ be $\bb{U}$-generic over $V[G*H*I]$ such that $T$ remains stationary in $V[G*H*I*J]$, and letting $K$ be $\bb{R}$-generic over $V[G*H*I*J]$, we can lift $j$ to $j:V[G_{i^*}] \rightarrow M[G*H*I*J*K]$. We can extend $j$ further to $j:V[G*H*I] \rightarrow M[G*H*I*J*K*G^+*H^+*I^+]$ using a master condition argument as in the previous case, exploiting the fact that $\bb{S}*\dot{\bb{U}}$ has a dense $\lambda$-closed subset in $V^{\bb{P}*\dot{\bb{A}}}$. And again, exactly as in the previous case, we can argue that $T$ must reflect in $V[G*H*I]$, for otherwise it would be non-stationary in $V[G*H*I*J]$.
\end{proof}

\section{Global bounded stationary reflection} \label{globalsect}

In this section, we improve upon results from \cite{reflection}, producing, from large cardinal assumptions, a model in which bounded stationary reflection holds at every possible successor of a singular cardinal.

\begin{theorem}
	Suppose there is a proper class of supercompact cardinals. Then there is a class forcing extension in which, for every singular cardinal $\mu > \aleph_\omega$, $\mathrm{Refl}(\mu^+)$ holds and there is a stationary subset of $S^{\mu^+}_\omega$ that does not reflect in $S^{\mu^+}_{>\aleph_\omega}$.
\end{theorem}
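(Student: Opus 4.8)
The plan is to globalize the construction of Section~\ref{densesect}: run the local forcing of that section at the successor of every singular cardinal $\mu > \aleph_\omega$ at once, with $\delta = \aleph_\omega$ fixed throughout, and bind the pieces together with a class-length reverse Easton iteration. I would begin with a class $\langle \kappa_\alpha \mid \alpha \in \mathrm{Ord}\rangle$ enumerating a tail of the supercompact cardinals, set $\kappa_0 = \omega$, and perform a preparatory reverse Easton iteration whose stage-$\alpha$ factor is $\mathrm{Coll}(\kappa_\alpha, <\kappa_{\alpha+1})$ when $\alpha$ is $0$ or a successor and is trivial when $\alpha$ is a limit, together with a Laver-style indestructibility preparation guaranteeing that each $\kappa_{\alpha+1}$ remains supercompact after the collapses below it. Exactly as in the dense theorem, the gaps left at limit stages make each $\kappa_\alpha$ with $\alpha$ a limit a singular cardinal; standard arguments give that in the resulting model $V_1$ one has $\kappa_\alpha = \aleph_\alpha$, GCH holds, and the singular cardinals $\mu > \aleph_\omega$ are precisely the $\aleph_\alpha$ for limit $\alpha > \omega$, with $\mu^+ = \kappa_{\alpha+1}$. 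It is convenient to also fold into this preparation a class forcing installing $AP_\mu$ at every singular $\mu$ (an Easton iteration of the posets $\bb{A}_{\vec{a}^{(\mu)}}$), so that in $V_1$ we have $AP_\mu$ for all singular $\mu > \aleph_\omega$; each such factor is $<\mu^+$-strategically closed and hence does not disturb $AP$ at smaller singulars.

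Over $V_1$ I would then define a class Easton-support iteration $\bb{Q}$ whose factor at each singular $\mu = \kappa_\alpha > \aleph_\omega$ is the iteration $\bb{S}^{(\mu)}$ of Section~\ref{densesect} of length $(\mu^+)^+$, adding at even stages (via $\bb{S}^{\mu^+}_{\omega, \aleph_{\omega+1}}$) a subset of $S^{\mu^+}_\omega$ with no reflection points in $S^{\mu^+}_{>\aleph_\omega}$, and destroying at odd stages (via the posets $CU$) the subsets of $S^{\mu^+}_{>\aleph_\omega}$ that the club-shooting product $\bb{U}^{(\mu)}$ renders nonstationary. The analysis of each factor is verbatim that of Section~\ref{densesect}: the analogue of the $\bb{V}_\xi$-lemma shows $\bb{S}^{(\mu)}$ is $\mu^+$-distributive and that $\bb{S}^{(\mu)} * \dot{\bb{U}}^{(\mu)}$ has a dense $\mu^+$-directed-closed subset, while Lemma~\ref{stat_dest} organizes the destruction without collapsing cardinals. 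Since every factor of $\bb{Q}$ is genuinely $\aleph_{\omega+1}$-directed-closed, the tail of $\bb{Q}$ above any given $\mu$ is $\aleph_{\omega+1}$-closed, so Lemma~\ref{genstat} together with Fact~\ref{apstat} (applied with closure cardinal $\aleph_{\omega+1}$ and using $AP_\mu$) shows that the added subset of $S^{\mu^+}_\omega$ remains stationary in the final model; its nonreflection in $S^{\mu^+}_{>\aleph_\omega}$ is immediate from the definition of $\bb{S}^{\mu^+}_{\omega, \aleph_{\omega+1}}$. As $\aleph_{\omega+1} < \mu$, this set witnesses that reflection fails at arbitrarily high cofinalities, so once $\mathrm{Refl}(\mu^+)$ is established, $\mathrm{bRefl}(\mu^+)$ follows.

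The heart of the argument, and the step I expect to be the main obstacle, is $\mathrm{Refl}(\mu^+)$ for all singular $\mu$ simultaneously. Fixing $\mu$, $\lambda = \mu^+$, and a stationary $T \subseteq \lambda$, I would shrink $T$ so that $T \subseteq S^\lambda_\theta$ for a fixed regular $\theta < \mu$, write $\theta = \kappa_{i_0}$, and use the supercompact $\kappa^* = \kappa_{i_0+2}$ (which lies strictly between $\theta$ and $\mu$ since $\mu = \kappa_\alpha$ with $\alpha$ a limit $> i_0$). As in the two cases of the dense theorem, I would fix a $\lambda^+$-supercompactness embedding $j$ with $\mathrm{crit}(j) = \kappa^*$ on the appropriate intermediate model, use Fact~\ref{lift} to rewrite $j(\mathrm{Coll}(\kappa_{i_0+1}, <\kappa^*))$ as the remaining collapse followed by the tail of $\bb{Q}$ followed by a highly closed remainder, and extend $j$ to the full final model by a master-condition argument, exploiting the $\mu^+$-directed closure of the tail of $\bb{Q}$ and the dense closed subset of $\bb{S}^{(\mu)} * \dot{\bb{U}}^{(\mu)}$ (in the case $\theta > \aleph_\omega$ one first forces with $\bb{U}^{(\mu)}$ so that $T$, which by the odd-stage bookkeeping was not killed by $\bb{U}^{(\mu)}$, stays stationary). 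Then, exactly as in Section~\ref{densesect}, failure of $T$ to reflect would make $j``T$ nonstationary at $\eta = \sup(j``\lambda)$, hence $T$ nonstationary after highly closed forcing, contradicting that $AP_\mu$ and Fact~\ref{apstat} preserve the stationarity of $T \subseteq S^\lambda_{<\mu}$.

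The genuine difficulty is purely one of closure bookkeeping across the proper class of local forcings. Relative to the chosen $\kappa^*$, one must verify that \emph{all} of the class forcing above $\kappa^*$ — the remaining collapses, the $AP$-preparation, and every instance of $\bb{S}^{(\mu')} * \dot{\bb{U}}^{(\mu')}$ at singulars $\mu' \ge \kappa^*$ (including $\mu$ itself and all larger singular cardinals) — decomposes as a strongly $\kappa^*$-strategically closed poset, or one with a dense closed subset, so that Fact~\ref{lift} genuinely applies and the lift introduces no new subsets of $\lambda$ and no counterexamples to the stationarity-preservation step. This is where the precise interleaving of the collapses, the $AP$-preparation, and the $\bb{S}$-iterations must be chosen with care; once it is in place, the local reflection argument of Section~\ref{densesect} runs uniformly at every $\mu^+$, completing the proof.
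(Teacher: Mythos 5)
Your high-level architecture matches the paper's (collapse to make the supercompacts into the $\aleph_\alpha$'s, force approachability, add one non-reflecting stationary subset of $S^{\mu^+}_\omega$ per singular $\mu$, iteratively destroy the destructible subsets of $S^{\mu^+}_{>\aleph_\omega}$, and prove reflection by lifting supercompactness embeddings via Fact~\ref{lift}, master conditions, and Fact~\ref{apstat}). You also correctly locate the crux: absorbing the entire tail of the class forcing above the critical point into a Levy collapse. But you have not solved that crux, and your specific design — local bookkeeping at each level, with the destruction at stage $\mu$ tied only to $\bb{U}^{(\mu)}$ — does not yield it. The forcings $\bb{S}^{(\mu')}$ at the intermediate singulars $\mu' \in [\kappa^*, \mu)$ are not strongly strategically closed (they add non-reflecting stationary sets, and the infima required by $G^*$ fail exactly when the accumulated $1$-set is stationary at a point of large cofinality), so the tail is only absorbable after adjoining club-shooting coordinates at \emph{every} intermediate limit level simultaneously. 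Knowing level-by-level that each $\bb{S}^{(\mu')} * \dot{\bb{U}}^{(\mu')}$ has a dense closed subset does not compose across the iteration for free: this is precisely the content of the paper's posets $\bb{C}^\zeta_{k,i}$ (conditions carrying club side-conditions $c(i')$ at all limit levels $i' \in (k,i]$) and of Lemma~\ref{clemma}, whose inductive proof is the technical heart of the section (including the case $\cf(i) \geq \kappa_k$, handled by passing to an intermediate $k'$, pulling names down through a chain condition, and threading partial runs of Player II's strategies for the $\bb{A}_{i'}$'s as side conditions). Without this lemma you cannot apply Fact~\ref{lift}; indeed you cannot even establish that your class iteration is distributive and preserves cardinals, since distributivity does not iterate.

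The second, related gap is the bookkeeping itself. Because the quotient you must force with in the lifting argument shoots clubs at \emph{all} levels in $(\kappa^*, \mu]$, your bookkeeping must guarantee a generic for that multi-level quotient preserving the stationarity of $T$; your local bookkeeping only guarantees this for $\bb{U}^{(\mu)}$ alone. The paper handles this by choosing the sets $\dot{T}^i_\zeta$ to be destroyed exactly when they are forced non-stationary by $\bb{V}^\zeta_{k,i}$ for every successor $k$, so that a stationary $T$ survives some $\bb{V}_{i^*+1,i}$ for arbitrarily large successor $i^*$ (using the monotonicity of the quotients in $k$). One might try to rescue your version by observing that each intermediate $\bb{U}^{(\mu')}$ has size $<\mu^+$ and hence is $\mu^+$-c.c., but you never address the issue, and the paper's mechanism is baked into the definition of the iteration rather than patched on afterward. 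Two further points you gloss over: for $T \subseteq S^{\mu^+}_{\kappa_k}$ with $k > \omega$ the paper does not take the critical point to be $\kappa_{k+2}$ but rather $\kappa_{i^*+1}$ for a successor $i^*$ with $\kappa_{i^*}$ above the \emph{index} $i$ of $\mu$ (so that $j$ fixes the index set of the tail, which the master-condition construction and Claim~\ref{lowerboundclaim} rely on); and the paper interleaves $\bb{A}_i * \dot{\bb{S}}_i * \dot{\bb{T}}_i$ as a single iterand of one full-support iteration, rather than separating the approachability preparation from the stationary-set forcings — a structural choice that the inductive arguments of Lemma~\ref{clemma} actually use.
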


\begin{proof}
	Assume GCH. Let $\langle \kappa_i \mid i \in \mathrm{On} \rangle$ be an increasing, continuous sequence of cardinals such that:
	\begin{itemize}
		\item{$\kappa_0 = \omega$.}
		\item{If $i$ is a limit ordinal or a successor of a limit ordinal, then $\kappa_{i+1} = \kappa_i^+$.}
		\item{If $i$ is not a limit ordinal or a successor of a limit ordinal, then $\kappa_{i+1}$ is supercompact.}
	\end{itemize}
	We may assume that, if $i$ is a limit ordinal, then $\kappa_i$ is singular by cutting the universe off at the least regular $\kappa_i$ with $i$ limit.

	We define a class forcing iteration $\langle \bb{P}_i, \dot{\bb{Q}}_i \mid i \in \mathrm{On} \rangle$, taken with full supports. If $i = 0$, $i = 1$, or $i$ is a successor of a successor ordinal, then let $\dot{\bb{Q}}_i$ be such that $\Vdash_i ``\dot{\bb{Q}}_i = \mathrm{Coll}(\kappa_i, < \kappa_{i+1})."$ If $i = \omega$ or $i$ is a successor of a limit ordinal, let $\dot{\bb{Q}}_i$ be a $\bb{P}_i$-name for trivial forcing.

	It remains to define $\dot{\bb{Q}}_i$ when $i > \omega$ is a limit ordinal. Fix such an $i$, and move temporarily to $V^{\bb{P}_i}$. Let $\vec{a}_i$ be an enumeration of the bounded subsets of $\kappa_{i+1}$ in order type $\kappa_{i+1}$, and let $\bb{A}_i$ be the poset to shoot a club through the set of ordinals below $\kappa_{i+1}$ that are approachable with respect to $\vec{a}_i$. In $V^{\bb{P}_i * \dot{\bb{A}}_i}$, let $\bb{S}_i$ be $\bb{S}^{\kappa_{i+1}}_{\omega, \kappa_{\omega + 1}}$. In $V^{\bb{P}_i * \dot{\bb{A}}_i * \dot{\bb{S}}_i}$, we will define an iteration, $\langle \bb{T}^i_\xi, \dot{\bb{U}}^i_\zeta \mid \xi \leq \kappa^+_{i+1}, \zeta < \kappa^+_{i+1} \rangle$, taken with supports of size $\kappa_i$ and, letting $\bb{T}_i = \bb{T}^i_{\kappa^+_{i+1}},$ we will let $\dot{\bb{Q}}_i$ be a $\bb{P}_i$-name for $\bb{A}_i * \dot{\bb{S}}_i * \dot{\bb{T}}_i$. If $p \in \bb{P}_{i+1}$, we will let $p(i)_0$, $p(i)_1$, and $p(i)_2$ denote the $\bb{A}_i$, $\dot{\bb{S}}_i$, and $\dot{\bb{T}}_i$ parts of $p(i)$, respectively. If $\zeta < \kappa^+_{i+1}$, we will let $p(i) \restriction \zeta$ denote $(p(i)_0, p(i)_1, p(i)_2 \restriction \zeta)$. Moreover, for $\zeta < \kappa^+_{i+1},$ and $k < i$ we will let $\bb{P}_{k, i+1} \restriction \zeta$ denote $\bb{P}_{k,i} * \dot{\bb{A}}_i * \dot{\bb{S}}_i * \dot{\bb{T}}^i_\zeta$.

	Suppose $\zeta < \kappa^+_{i+1}$ and we have defined $\bb{T}^i_\zeta$. We describe how to define $\dot{\bb{U}}^i_\zeta$. For all limit ordinals $\omega < i' \leq i$, let $S_{i'}$ be the stationary subset of $S^{\kappa_{i'+1}}_\omega$ added by $\bb{S}_{i'}$. For all $\omega < k < i$, with $k$ a successor ordinal, let $X^i_k$ be the set of limit ordinals in $(k,i]$, and let $\bb{C}^\zeta_{k,i}$ be the poset defined in $V^{\bb{P}_k}$ as follows. Conditions are pairs $(p, c)$ such that:
	\begin{itemize}
		\item{$p \in \bb{P}_{k,i+1} \restriction \zeta$.}
		\item{$c$ is a function, and $\dom(c) = X^i_k$.}
		\item{For all $i' \in \dom(c)$, $c(i')$ is a $\bb{P}_{k, i'}$-name for a closed, bounded subset of $\kappa_{i'+1}$ and $p \restriction [k, i')^\frown p(i')_0 {^\frown} p(i')_1 \Vdash ``c(i') \cap \dot{S}_{i'} = \emptyset."$}
	\end{itemize}
	$(p',c') \leq (p, c)$ if $p' \leq p$ and, for all $i' \in \dom(c)$, $p' \restriction i' \Vdash ``c'(i')$ end-extends $c(i')$." 

	The map $(p,c) \mapsto p$ is clearly a projection from $\bb{C}^\zeta_{k,i}$ to $\bb{P}_{k, i+1} \restriction \zeta$. Let $\bb{V}^\zeta_{k,i} \in V^{\bb{P}_{i+1} \restriction \zeta}$ be the quotient poset, so $\bb{C}^\zeta_{k,i} \cong \bb{P}_{k,i+1} \restriction \zeta * \dot{\bb{V}}^\zeta_{k,i}$. Let $\dot{T}^i_\zeta$ be a $\bb{T}^i_\zeta$-name for a subset of $S^{\kappa_{i+1}}_{>\kappa_\omega}$ such that, for every successor ordinal $k$ with $\omega < k < i$, $\Vdash_{\bb{V}^\zeta_{k,i}}``\dot{T}^i_\zeta$ is non-stationary," and let $\dot{\bb{U}}^i_\zeta$ be a $\bb{T}^i_\zeta$-name for $CU(\dot{T}^i_\zeta)$. If $k$ is a successor ordinal with $\omega < k < i$, notice that, if $\zeta < \zeta' \leq \kappa^+_{i+1}$, there is a natural projection from $\bb{C}^{\zeta'}_{k,i}$ to $\bb{C}^{\zeta}_{k,i}$. Let $\bb{C}_{k,i} = \bb{C}^{\kappa^+_{i+1}}_{k,i}$, and let $\bb{V}_{k,i}$ be the quotient forcing over $\bb{P}_{k,i+1}$ in $V^{\bb{P}_{i+1}}$. Notice also that, if $(p,c) \in \bb{C}_{k,i}$, then there is $\zeta < \kappa^+_{i+1}$ such that $(p,c) \in \bb{C}^\zeta_{k,i}$ and that, if $i' \in (k,i)$ is a limit ordinal, then $\{p \restriction (i'+1), c \restriction (i'+1)| \mid (p,c) \in \bb{C}^\zeta_{k,i} \} = \bb{C}_{k, i'}$ and that $\bb{C}^\zeta_{k,i} \cong \bb{C}_{k, i'} * \dot{\bb{C}}^\zeta_{i'+1, i}.$

	Note that, in $V^{\bb{P}_i * \dot{\bb{A}}_i * \dot{\bb{S}}_i}$, $\bb{T}_i$ has the $\kappa^+_{i+1}$-c.c., so, by standard bookkeeping arguments, we can arrange so that every canonical $\bb{T}_i$-name for a subset of $S^{\kappa_{i+1}}_{>\kappa_\omega}$ was considered as $\dot{T}^i_\zeta$ for cofinally many $\zeta < \kappa^+_{i+1}$. Thus, we can arrange that, in $V^{\bb{P}_{i+1}}$, if $T \subseteq S^{\kappa_{i+1}}_{>\kappa_\omega}$ is such that, for every successor ordinal $k < i$, $\Vdash_{\bb{V}_{k,i}}``T$ is non-stationary," then $T$ is already non-stationary in $V^{\bb{P}_{i+1}}$.

	\begin{lemma} \label{clemma}
		Let $\omega < k < i$, with $k$ a successor ordinal and $i$ a limit ordinal, and let $\zeta \leq \kappa^+_{i+1}$. In $V^{\bb{P}_k}$, for every $\ell \in X^i_k$ and every $\xi < \kappa^+_{\ell+1}$, let $\dot{C}^{\xi}_\ell$ be a $\bb{C}^\xi_{k, \ell}$-name for a club in $\kappa_{\ell + 1}$ disjoint from $\dot{T}^\ell_\xi$.
		\begin{enumerate}
			\item{For every $(p, c) \in \bb{C}^\zeta_{k,i}$, there is $(p^*, c^*) \leq (p,c)$ such that:
			\begin{enumerate}
				\item{There is $h \in \prod_{\ell \in X^i_k} \kappa_{\ell + 1}$ such that, for all $\ell \in X^i_k$, $p^* \restriction \ell {^\frown} p^*(\ell)_0 \Vdash ``\gamma_{p^*(\ell)_1} = h(\ell) = \max(c^*(\ell))."$}
				\item{For every $\ell \in X^i_k$, for every $\xi < \kappa^+_{\ell+1}$, (or $\xi < \zeta$ if $\ell = i$) $(p^* \restriction \ell {^\frown} p^*(\ell) \restriction \xi, c^* \restriction (\ell + 1)) \Vdash ``\xi \not\in \dom(p^*(\ell)_2)$ or $\max(p^*(\ell)_2(\xi)) \in \dot{C}^\xi_\ell.")$}
			\end{enumerate}}
			\item{$\bb{P}_{k, i+1} \restriction \zeta$ is $\kappa_k$-distributive.}
		\end{enumerate}
	\end{lemma}

	\begin{proof}
		First note that, by now-familiar arguments, the set of conditions $(p^*, c^*)$ that satisfy (1)(a) and (1)(b), which we will denote by $\bb{C}^{\zeta, *}_{k,i}$ (or just $\bb{C}^*_{k,i}$ if $\zeta = \kappa^+_{i+1}$), is easily seen to be strongly $\kappa_k$-strategically closed. This will be useful in the inductive proof of (1) and also immediately yields (2) from the corresponding instance of (1).

		We proceed by induction on $i$ and, for fixed $i$, by induction on $\zeta \leq \kappa^+_{i+1}$. Thus, let $k < i$ be given, let $\zeta = 0$, and let $(p,c) \in \bb{C}^\zeta_{k,i}$. First, suppose that $i = k + \omega$. In this case, find $p' \leq p \restriction [k,i) {^\frown} p(i)_0$ and $\alpha_i < \kappa_{i+1}$ such that $\cf(\alpha_i) = \omega$ and $p' \Vdash ``\gamma_{p(i)_1}, \max(c(i)) < \alpha_i."$. Form $(p^*, c^*)$ by letting $p^* \restriction i {^\frown} p^*(i)_0 = p'$, letting $p^*(i)_1$ be a name forced by $p'$ to be equal to the function in ${^{\alpha_i+1}2}$ extending $p(i)_1$ that is constantly zero on $(\gamma_{p(i)_1}, \alpha_i + 1)$, and letting $c^*(i)$ be a name forced by $p'$ to be equal to $c(i) \cup \{\alpha\}$.

		Next, suppose $i = i' + \omega$ for some limit ordinal $i' \in (k, i)$. Find $p' \leq p \restriction i {^\frown} p(i)_0$ and $\alpha_i < \kappa_{i+1}$ such that $\cf(\alpha_i) = \omega$ and $p' \Vdash ``\gamma_{p(i)_1}, \max(c(i)) < \alpha_i."$. Define $(p^*, c^*)$ by letting $(p^* \restriction [k,i'+1), c^* \restriction [k,i'+1)) \leq (p' \restriction [k, i'+1), c \restriction [k,i'+1))$ witness (1) for $\bb{C}_{k, i'}$, letting $p^* \restriction [i'+1, i) {^\frown} p^*(i)_0 = p' \restriction [i'+1, i) {^\frown} p'(i)_0$, letting $p^*(i)_1$ be a name forced by $p'$ to be equal to the function in ${^{\alpha_i+1}2}$ extending $p(i)_1$ that is constantly zero on $(\gamma_{p(i)_1}, \alpha_i + 1)$, and letting $c^*(i)$ be a name forced by $p'$ to be equal to $c(i) \cup \{\alpha\}$.

		Finally, suppose that $i$ is a limit of limit ordinals. We first suppose that $\cf(i) < \kappa_k$. Let $\langle i_\eta \mid \eta < \cf(i) \rangle$ be an increasing, continuous sequence of limit ordinals from $(k,i)$ that is cofinal in $i$. Find $p' \leq p \restriction i {^\frown} p(i)_0$ and $\alpha_i$ as in the previous cases. Recursively construct a sequence $\langle (p_\eta, c_\eta) \mid \eta < \cf(i) \rangle$ such that:
		\begin{itemize}
			\item{For all $\eta < \cf(i)$, $(p_\eta, c_\eta) \in \bb{C}_{k, i_\eta}$ and satisfies (1).}
			\item{For all $\eta < \cf(i)$, $(p_\eta, c_\eta) \leq (p' \restriction (i_\eta + 1), c \restriction (i_\eta + 1))$.}
			\item{For all $\eta < \eta' < \cf(i)$, $(p_{\eta'} \restriction (i_\eta + 1), c_{\eta'} \restriction (i_\eta + 1)) \leq (p_\eta, c_\eta)$.}
			\item{For all $i' \in X^i_k$, if $\eta^*$ is the least $\eta$ such that $i' \leq i_\eta$, then, for all $\eta^* \leq \eta < \cf(i)$, $p_\eta \restriction i'$ forces that $\langle p_{\delta}(i')_0 \mid \eta^* \leq \delta \leq \eta \rangle$ is a partial run of $G^*_{\kappa_k}(\bb{A}_{i'})$ with Player II playing according to her winning strategy.}
		\end{itemize} 
		The construction is straightforward by the induction hypothesis, and it is straightforward to use $\langle (p_\eta, c_\eta) \mid \eta < \cf(i) \rangle$ to, by taking unions (and, where appropriate, closures) along all coordinates and adding $\alpha_i$ to the end of the $i$th coordinates as in the previous cases, get a $(p^*, c^*)$ as desired.

		If $\kappa_k \leq \cf(i)$, then find $k < k' < i$ with $k'$ a limit ordinal and $\cf(i) < \kappa_{k'+1}$. Move temporarily to $V^{\bb{P}_{k'+1}}$, and interpret $(p \restriction [k'+1, i+1), c \restriction [k'+1, i+1))$ in $\bb{C}^\zeta_{k'+1, i}$ as $(p_0, c_0)$. For every $\ell \in X^i_{k'+1}$ and $\xi < \kappa^+_{\ell+1}$, the quotient forcing of $\bb{C}^\xi_{k, \ell}$ over $\bb{P}_{k, k'+1} * \bb{C}^\xi_{k'+1, \ell}$ has the $\kappa^+_{k'+1}$-c.c., so we can find a $\bb{C}^\xi_{k'+1, \ell}$-name $\dot{D}^\xi_\ell$ for a club in $\kappa_{\ell + 1}$ that is forced by the quotient forcing of $\bb{C}^\xi_{k, \ell}$ over $\bb{P}_{k, k'+1}$ to be a subset of $\dot{C}^\xi_\ell$. Use the argument from the previous paragraph to find $(p_1, c_1) \leq (p_0, c_0)$ satisfying (1) for $\bb{C}^\zeta_{k'+1, i}$ and the set of $\dot{D}^\xi_\ell$'s as witnessed by $h_1 \in \prod_{\ell \in X^i_{k'+1}} \kappa_{\ell + 1}$. Let $(\dot{p}_1, \dot{c}_1)$ and $\dot{h}_1$ be $\bb{P}_{k, k'+1}$-names for $(p_1, c_1)$ and $h_1$, respectively. Since $\bb{P}_{k, k'+1}$ satisfies the $\kappa_{k'+2}$-c.c., there is a function $h^*_1 \in \prod_{\ell \in X^i_{k'+1}} \kappa_{\ell + 1}$ in $V^{\bb{P}_k}$ such that $\Vdash ``\dot{h}_1 \leq h^*_1"$ and $\cf(h^*_1(\ell)) = \omega$ for all $\ell \in X^i_{k'+1}$. Find $p_2 \leq p \restriction [k, k'+1)$ forcing that $(\dot{p}_1, \dot{c}_1)$ satisfies (1) as witnessed by $h^*_1$. Now form $(p^*, c^*)$ by letting $(p^* \restriction [k, k'+1), c^* \restriction [k, k'+1)) \leq (p_2, c \restriction [k, k'+1))$ witness (1) for $\bb{C}_{k, k'}$ and letting $(p^* \restriction [k'+1, i+1), c^* \restriction [k'+1, i+1)) = (\dot{p}_1, \dot{c}_1).$

		We now deal with the case $\zeta > 0$. First, suppose $\zeta = \zeta_0 + 1$. We may assume that $p \restriction i {^\frown} p(i)_0 {^\frown} p(i)_1$ decides whether $\zeta_0 \in \dom(p(i)_2)$. If it decides $\zeta_0 \not\in \dom(p(i)_2)$, then we are done by the induction hypothesis applied to $\zeta_0$. Otherwise, find $(p', c') \leq (p \restriction i {^\frown} p(i) \restriction \zeta_0, c)$ and $\alpha < \kappa_{i+1}$ such that $\cf(\alpha) = \omega$ and $(p',c') \Vdash ``\max(p(i)_2(\zeta_0)) < \alpha$ and $\alpha \in \dot{C}^{\zeta_0}_i$." Form $(p^*, c^*)$ by letting $(p^* \restriction i {^\frown} p^*(i) \restriction \zeta_0, c^*) \leq (p',c')$ witness (1) for $\bb{C}^{\zeta_0}_{k,i}$ and letting $p^*(i)_2(\zeta_0)$ be a name forced by $p'$ to be equal to $p(i)_2(\zeta_0) \cup \{\alpha\}$.

		Suppose $\zeta$ is a limit ordinal. If $\cf(\zeta) \geq \kappa_{i+1}$, then, strenthening $p$ if necessary, we may assume $(p,c) \in \bb{C}^{\zeta_0}_{k,i}$ for some $\zeta_0 < \zeta$, and we are done by the induction hypothesis. Thus, suppose $\mu := \cf(\zeta) < \kappa_i$. Also assume that $\mu < \kappa_k$. If $\mu \geq \kappa_k$, the same trick we used in the $\zeta = 0$ case will work. Let $\langle \zeta_\eta \mid \eta < \mu \rangle$ be an increasing, continuous sequence of ordinals, cofinal in $\zeta$, and construct a sequence $\langle (p_\eta, c_\eta) \mid \eta < \mu \rangle$ such that:
		\begin{itemize}
			\item{For all $\eta < \mu$, $(p_\eta, c_\eta) \in \bb{C}^{\zeta_\eta}_{k,i}$ and satisfies (1).}
			\item{For all $\eta < \mu$, $(p_\eta, c_\eta) \leq (p \restriction i {^\frown} p(i) \restriction \zeta_\eta, c)$.}
			\item{For all $\eta < \eta' < \mu$, $(p_{\eta'} \restriction i {^\frown} p_{\eta'}(i) \restriction \zeta_\eta, c_{\eta'}) \leq (p_\eta, c_\eta)$.}
			\item{For all $\ell \in X^i_k$ and all $\eta < \mu$, $p_\eta \restriction \ell$ forces that the sequence $\langle p_\delta(\ell)_0 \mid \delta \leq \eta \rangle$ is a partial run of the game $G^*_{\kappa_k}(\bb{A}_{\ell})$ with Player II playing according to her winning strategy.}
		\end{itemize}
		The construction is a straightforward recursion, and, as in the $\zeta = 0$ case, it is easy to see that $\langle (p_\eta, c_\eta) \mid \eta < \mu \rangle$ gives us a $(p^*, c^*) \leq (p,c)$ witnessing (1).
	\end{proof}

	Note that $\bb{C}^*_{k,i}$ has the following closure property. We omit the proof, which is straightforward.

	\begin{claim} \label{lowerboundclaim}
		Suppose $\omega < k < i$, with $k$ a successor ordinal and $i$ a limit ordinal. In $V^{\bb{P}_k}$, suppose $A \subseteq C^*_{k,i}$ is a directed set of size $<\kappa_k$. For each $\ell \in X^i_k$, let $\gamma_\ell = \sup(\{\gamma_{p(\ell)_1} \mid (p,c) \in A\})$ and suppose that, for all $\ell \in X^i_k$, $\Vdash_{\bb{P}_{k,\ell}} ``\gamma_\ell$ is approachable with respect to $\dot{\vec{a}}_\ell."$ Then $A$ has a lower bound in $C^*_{k,i}$.
	\end{claim}

	Lemma \ref{clemma} has the following immediate corollary.

	\begin{corollary}
		Let $k < i$ be ordinals, with $k$ a successor and $i$ a limit.
		\begin{enumerate}
			\item{In $V^{\bb{P}_k}$, $\bb{C}^*_{k,i}$ is a dense, strongly $\kappa_k$-strategically closed subset of $\bb{C}_{k,i}$.}
			\item{In $V^{\bb{P}_i}$, $\bb{P}_{i, i+1}$ is $\kappa_{i+1}$-distributive.}
		\end{enumerate}
	\end{corollary}

	\begin{proof}
		(1) is immediate. (2) follows from the Lemma \ref{clemma} together with the observation that, if $\kappa$ is a singular cardinal and a poset $\bb{P}$ is $\kappa$-distributive, then it is also $\kappa^+$-distributive.
	\end{proof}

	The fact that, for all $i < k$, $\bb{P}_{i,k}$ is $\kappa_i$-distributive in $V^{\bb{P}_i}$ means that $V^{\bb{P}} = \bigcup_{i \in \mathrm{On}} V^{\bb{P}_i}$ is a model of ZFC. It is also easy to see that, for all $i \in \mathrm{On}$, $\kappa_i = \aleph_i^{V^{\bb{P}}}.$ If $i > \omega$ is a limit ordinal, then, in $V^{\bb{P}_i * \dot{\bb{A}}_i * \dot{\bb{S}}_i}$, $S_i$ is a stationary subset of $S^{\kappa_{i+1}}_\omega$ that does not reflect at any ordinals in $S^{\kappa_{i+1}}_{>\kappa_\omega}$. Since $\bb{T}_i * \dot{\bb{P}}_{i+1, k}$ is countably-closed for all $k > i+1$, $S_i$ remains stationary in $V^{\bb{P}}$.

	It remains to show that, if $i > \omega$ is a limit ordinal, then $\mathrm{Refl}(\kappa_{i+1})$ holds in $V^{\bb{P}}$. Thus, fix a limit ordinal $i > \omega$. Since, for every $k > i+1$, $\bb{P}_{i+1, k}$ adds no new subsets of $\kappa_{i+1}$ (recall that $\bb{P}_{i+1,i+2}$ is trivial forcing if $i$ is a limit ordinal), it suffices to check that $\mathrm{Refl}(\kappa_{i+1})$ holds in $V^{\bb{P}_{i+1}}$.

	Let $G = G_{i+1}$ be $\bb{P}_{i+1}$-generic over $V$. For $k < i+1$, let $G_k$ be the $\bb{P}_k$-generic filter induced by $G$. If $\omega < k < i+1$ and $k$ is a limit ordinal, let $G_{k,0}$ be the $\bb{P}_k * \dot{\bb{A}}_k$-generic induced by $G$, and let $G_{k,1}$ be the $\bb{P}_k * \dot{\bb{A}}_k * \dot{\bb{S}}_k$-generic induced by $G$. For $k < k' \leq i+1$, let $G_{k,k'}$ be the $\bb{P}_{k,k'}$-generic filter over $V[G_k]$ induced by $G$. Let $T \in V[G]$ be a stationary subset of $\kappa_{i+1}$. By shrinking $T$ if necessary, we may assume that there is $k < i$ such that $T \subseteq S^{\kappa_{i+1}}_{\kappa_{k}}.$

	We first assume that $k < \omega$. Let $k^* = k+2$. In $V[G_{k+1}]$, $\kappa_{k^*}$ is still supercompact, so fix $j:V[G_{k+1}] \rightarrow M[G_{k+1}]$ witnessing that $\kappa_{k^*}$ is $\kappa_{i+1}$-supercompact. In $V[G_{k+1}]$, $j(\bb{Q}_{k+1}) = \mathrm{Coll}(\kappa_{k+1}, <j(\kappa_{k^*})$, and $\bb{P}_{k^*, i+1}$ is strongly $\kappa_{k+1}$-strategically closed, so, by Fact \ref{lift}, $j(\bb{Q}_{k+1}) \cong \bb{P}_{k+1, i+1} * \dot{\bb{R}}$, where $\dot{\bb{R}}$ is forced to be $\kappa_{k+1}$-closed. Thus, letting $H$ be $\bb{R}$-generic over $V[G]$, we can extend $j$ to $j:V[G_{k^*}] \rightarrow M[G*H]$.

	To lift $j$ further to have domain $V[G]$, we define a condition $p^* \in j(\bb{P}_{k^*, i+1})$ such that $p^* \leq j(p)$ for all $p \in G_{k^*, i+1}$. We recursively define $p^*(\alpha)$ for $\alpha \in [k^*, j(i+1))$. Thus, suppose $\alpha \in [k^*, j(i+1))$ and we have defined $p^* \restriction [k^*, \alpha)$. If $\alpha = \omega$ or $\alpha$ is the successor of a limit ordinal, then $p^*(\alpha)$ is a name for the sole condition in the trivial forcing. If $\alpha$ is the successor of a successor ordinal, then the $\alpha$-th iterand in $j(\bb{P}_{k^*, i+1})$ is a Levy collapse that is $j(\kappa_{k^*})$-directed closed, so we can let $p^*(\alpha)$ be a name forced by $p^* \restriction [k^*, \alpha)$ to be a lower bound for $\{j(p)(\alpha) \mid p \in G_{k^*, i+1}\}$. If $\alpha$ is a limit ordinal, let $\gamma_\alpha = \sup(\{j(g)(\alpha) \mid g \in \prod_{\ell \leq i}\kappa_{\ell + 1} \cap V\})$. Let $p^*(\alpha)_0$ be a name forced by $p^* \restriction [k^*, \alpha)$ to be equal to $\{\gamma_\alpha\} \cup \bigcup_{p \in G_{k^*, i+1}} j(p)(\alpha)_0.$ This will be forced to be a condition provided that $\gamma_\alpha$ is forced to be approachable with respect to the entry corresponding to $\alpha$ in $j(\langle \vec{a}_\ell \mid \ell \in X^i_k \rangle)$. The argument showing that this is the case can be found in \cite{reflection}. The $\bb{S}$ and $\bb{T}$ parts of the $\alpha$-th iterand in $j(\bb{P}_{k^*, i+1})$ are forced to be $j(\kappa_{\omega+1})$-directed closed, so we can find $(p^*(\alpha)_1, p^*(\alpha)_2)$ that is forced by $p^* \restriction \alpha {^\frown} p^*(\alpha)_0$ to be a lower bound for $\{(j(p)(\alpha)_1, j(p)(\alpha)_2) \mid p \in G_{k^*, i+1}\}$.

	Let $I$ be $j(\bb{P}_{k^*, i+1})$-generic over $V[G*H]$ with $p^* \in I$, and lift $j$ to $j:V[G] \rightarrow M[G*H*I]$. By familiar arguments, we can now argue that, if $T$ does not reflect in $V[G]$, then $T$ is not stationary in $V[G*H*I]$, so there is a club $C$ in $\kappa_{i+1}$ with $C \in V[G*H*I]$ such that $C \cap T = \emptyset$. But $I$ is generic for $j(\kappa^*)$-distributive forcing (recall $j(\kappa^*) > \kappa_{i+1}$) and thus could not have added $C$, so $C \in V[G*H]$. $H$ is generic for $\kappa_{k+1}$-closed forcing and, in $V[G]$, $AP_{\kappa_i}$ holds and $T$ is a stationary subset of $S^{\kappa_{i+1}}_{<\kappa_{k+1}}$. Thus, by Fact \ref{apstat}, $T$ remains stationary in $V[G*H]$. This is a contradiction, so $T$ reflects in $V[G]$.

	Next, suppose $k > \omega$. Let $k < i^* < i$, with $i^*$ a successor ordinal large enough so that $\kappa_{i^*} > i$ and it is not the case that $\Vdash_{\bb{V}_{i^*+1, i}}``T$ is non-stationary.$"$ Let $k^* = i^*+1$, and fix $j:V[G_{i^*}] \rightarrow M[G_{i^*}]$ witnessing that $\kappa_{k^*}$ is $\kappa_{i+1}$-supercompact in $V[G_{i^*}]$. Recall that $\bb{C}_{k^*, i}$ has a dense, strongly $\kappa_{i^*}$-strategically-closed forcing. Thus, by Fact \ref{lift}, $j(\bb{Q}_{i^*}) \cong \bb{Q}_{i^*} * \dot{\bb{C}}_{k^*, i} * \dot{\bb{R}} \cong \bb{P}_{i^*, i+1} * \dot{\bb{V}}_{k^*, i} * \dot{\bb{R}}$, where $\dot{\bb{R}}$ is forced to be $\kappa_{i^*}$-closed. Let $H$ be $\bb{V}_{k^*, i}$-generic over $V[G]$ such that $T$ remains stationary in $V[G*H]$, let $I$ be $\bb{R}$-generic over $V[G*H]$, and extend $j$ to $j:V[G_{k^*}] \rightarrow M[G*H*I]$.

	We again define a condition $p^* \in j(\bb{P}_{k^*, i+1})$ such that $p^* \leq j(p)$ for all $p \in G_{k^*, i+1}$. Since $i < \kappa_{k^*}$, the domain of conditions in $j(\bb{P}_{k^*, i+1})$ is $[k^*, i+1)$. If $\ell \in [k^*, i+1)$ is a limit ordinal, let $\gamma_\ell = \sup(j``\kappa_{\ell+1})$. Note that $\gamma_\ell = \sup(\{\gamma_{p(\ell)_1} \mid (p,c) \in (G*H) \cap \bb{C}^*_{k^*, i}$ and that, as in the previous case and proven in \cite{reflection}, $\gamma_\ell$ is forced to be approachable with respect to $j(\dot{\vec{a}}_\ell)$. Thus, by Claim \ref{lowerboundclaim}, $\{j(p,c) \mid (p,c) \in (G*H) \cap \bb{C}^*_{k^*, i} \}$ has a lower bound. Let $(p^*, c^*)$ be such a lower bound and note that, since $\bb{C}^*_{k^*, i}$ is dense in $\bb{C}_{k^*, i}$, $p^* \leq j(p)$ for all $p \in G_{k^*, i+1}$.

	As in the previous case, let $J$ be $j(\bb{P}_{k^*, i+1})$-generic with $p^* \in J$ and lift $j$ to $j:V[G] \rightarrow M[G*H*I*J]$. As before, we argue that, if $T$ does not reflect in $V[G]$, then it is not stationary in $V[G*H*I*J]$. As before, we can pull the non-stationarity back to $V[G*H]$. However, we chose $H$ so that $T$ remains stationary in $V[G*H]$. This is a contradiction, so $T$ reflects in $V[G]$. 
\end{proof}

\section{Bounded stationary reflection without approachability} \label{approachSect1}

In the previous results, in order to obtain a model in which $\mu$ is a singular cardinal and $\mathrm{bRefl}(\mu^+)$ holds, we forced $AP_\mu$. In the final two sections of this paper, we produce models in which $\mathrm{bRefl}(\mu^+)$ holds and $AP_\mu$ fails. We first find such a model in which $\mu$ is a limit of large cardinals.

Let $\langle \kappa^0_n \mid n < \omega \rangle$ and $\langle \kappa^1_n \mid n < \omega \rangle$ be increasing sequences of supercompact cardinals such that, letting $\kappa^i_\omega = \sup(\{\kappa^i_n \mid n < \omega \})$ for $i \in \{0,1\}$, we have $\kappa^0_\omega < \kappa^1_0$. For $i \in \{0,1\}$, let $\lambda_i = (\kappa^i_\omega)^+$.

\begin{theorem}
	Assume GCH. There is a cardinal-preserving forcing extension in which:
	\begin{enumerate}
		\item{$\mathrm{Refl}(\lambda_1)$ holds.}
		\item{There is a stationary $S\subseteq S^{\lambda_1}_\omega$ that does not reflect at any ordinals in $S^{\lambda_1}_{\geq \lambda_0}$.}
		\item{$AP_{\kappa^1_\omega}$ fails.}
	\end{enumerate}
\end{theorem}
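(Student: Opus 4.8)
The plan is to realize the model as a forcing extension of a ground model $V$ satisfying GCH in which $\mu = \kappa^1_\omega$ is a limit of the fully supercompact cardinals $\kappa^1_n$. The whole point of the earlier sections was to force $AP_\mu$ and thereby gain access to Fact \ref{apstat} and to the approachability-based master conditions of Claim \ref{lowerboundclaim}; here I would instead exploit the fact that we only need a \emph{single} non-reflecting set at the \emph{single} cardinal $\lambda_1$. First fix, in $V$, a stationary $T \subseteq S^{\lambda_1}_\omega$ with $T \in I[\lambda_1]$; such a $T$ exists in ZFC (a theorem of Shelah) regardless of whether $AP_\mu$ holds. Let $\bb{S} = \bb{S}_{T, \lambda_0}$ be the poset adding a subset $S$ of $T$ that does not reflect at any point of $S^{\lambda_1}_{\geq \lambda_0}$, and take the forcing to be $\bb{S}$, augmented, if necessary for (1), by the stationary-set-destroying iteration of Section \ref{destroySection} to dispose of any high-cofinality non-reflecting sets that survive. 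Crucially, I would \emph{not} force the approachability poset $\bb{A}$.

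Conclusion (2) is then almost immediate: $S$ fails to reflect on $S^{\lambda_1}_{\geq \lambda_0}$ by the definition of $\bb{S}_{T, \lambda_0}$, and $S$ is stationary by Lemma \ref{genstat}, whose only hypothesis is $T \in I[\lambda_1]$ (it does not need $AP_\mu$, only an internally approachable chain at a single point of $T$, supplied by Lemma \ref{ialemma}). For conclusion (3) I would observe that $\neg AP_\mu$ already holds in $V$: since $\mu$ is a limit of $\lambda_1$-supercompact, hence $\lambda_1$-strongly compact, cardinals, every scale on $\mu$ has a stationary set of bad points, so $\lambda_1 \notin I[\lambda_1]$ (Cummings--Foreman--Magidor). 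Taking the witnessing stationary set of non-approachable points inside $S^{\lambda_1}_{<\lambda_0}$, I would preserve $\neg AP_\mu$ by noting that the forcing is $<\lambda_1$-distributive (this is exactly the content of the dense $\lambda_1$-closed subset produced by Lemma \ref{stat_dest}), hence adds no new subsets of $\mu$ and no new short cofinal sequences, so it creates no new approachability witnesses and, being $\lambda_0$-closed, preserves the stationarity of the bad set.

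The heart of the matter is conclusion (1). Given a stationary $U \subseteq \lambda_1$ I would thin it to $U \subseteq S^{\lambda_1}_\kappa$ for a fixed regular $\kappa < \mu$ and split into two cases. If $\kappa < \lambda_0$, reflect $U$ with a \emph{lower-block} embedding: pick a $\lambda_1$-supercompact $\kappa^0_n > \kappa$ and $j : V \to M$ with critical point $\kappa^0_n$, and lift $j$ directly through $\bb{S}$. The master condition for the $\bb{S}$-coordinate is simply $\bigcup j``G_{\bb{S}}$: since $\mathrm{crit}(j) < \lambda_0$ we have $j(\lambda_0) = (j(\kappa^0_\omega))^+ > \lambda_1$, so $\eta := \sup(j``\lambda_1)$ has cofinality $\lambda_1 < j(\lambda_0)$ and is therefore not subject to the non-reflection clause of $\bb{S}_{j(T), j(\lambda_0)}$, while at points below $\eta$ of cofinality $\geq j(\lambda_0)$ the support $j``S \subseteq j(S)$ inherits non-reflection from $j(S)$ by elementarity. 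The usual argument (if $U$ does not reflect, then $j``U$ is nonstationary in $\eta$, forcing $U$ nonstationary) then shows $U$ reflects; in particular this shows $S$ itself reflects, necessarily at a point of cofinality $<\lambda_0$, consistent with (2).

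The main obstacle is the case $\lambda_0 \leq \kappa < \mu$, where one wants an \emph{upper-block} embedding $j$ with $\mathrm{crit}(j) = \kappa^1_n > \lambda_0$, so that $j(\lambda_0) = \lambda_0$. Now $\eta = \sup(j``\lambda_1)$ has cofinality $\lambda_1 \geq \lambda_0$, so $j(S)$ does not reflect there; but $j``S$ \emph{is} stationary in $\eta$ (as $S$ is stationary and $M^{\lambda_1} \subseteq M$), so no lift of $j$ through $\bb{S}$ alone can exist. In the earlier sections this was circumvented using approachability of $\eta$ via Claim \ref{lowerboundclaim}, which is unavailable here. The replacement I propose is to first force the partner poset $CU(\dot S)$, killing the stationarity of $S$: by the fact from Section \ref{forcingsect} that $\bb{S} * CU(\dot S)$ has a dense $\lambda_1$-closed subset, $j$ lifts through $\bb{S} * CU(\dot S)$ purely by $j(\lambda_1)$-closure of $j(\bb{S} * CU(\dot S))$, with no approachability input whatsoever, and once $S$ is nonstationary, $j``S$ is nonstationary in $\eta$ and the obstruction evaporates. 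Reflecting $U$ in $V^{\bb{S} * CU(\dot S)}$ (where $U$, being of cofinality $\geq \lambda_0$, is untouched by the destruction of the cofinality-$\omega$ set $S$) and pulling the reflection back to $V^{\bb{S}}$ completes the argument. I expect the delicate points to be verifying that $CU(\dot S)$ preserves the stationarity of the high-cofinality sets under consideration, and confirming, via the bookkeeping of the Section \ref{destroySection} iteration, that no \emph{new} non-reflecting stationary subset of $S^{\lambda_1}_{\geq \lambda_0}$ sneaks into the final model.
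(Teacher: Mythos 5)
Your overall architecture for (2) --- a poset of type $\bb{S}_{T,\lambda_0}$ (the paper uses $\bb{S}^{\lambda_1}_{\omega,\lambda_0}$, which is the case $T = S^{\lambda_1}_\omega \in I[\lambda_1]$) followed by the Section \ref{destroySection} bookkeeping iteration, with $CU(\dot S)$ as the poset that unlocks upper-block embeddings --- matches the paper's, and your instinct about why $CU(\dot S)$ dissolves the $j``S$ obstruction is exactly right. But your verifications of (3) and (1) have genuine gaps, and both trace to the same missing ingredient: the paper begins with Laver's preparation, making each $\kappa^i_n$ indestructible under $\kappa^i_n$-directed closed forcing, and indestructibility is the engine for both conclusions there.

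The gap in (3) is the serious one: both preservation principles you invoke are unusable. The inference ``$\lambda_1$-distributive, hence no new approachability witnesses'' is false: a witness to $AP_{\kappa^1_\omega}$ is a pair (enumeration, club), both of length $\lambda_1$, which distributive forcing is free to add even though it adds no bounded subsets of $\lambda_1$. Indeed the poset $\bb{A}_{\vec a}$ of Section \ref{forcingsect} is strongly $<\lambda_1$-strategically closed, hence $\lambda_1$-distributive and adds no new subsets of $\kappa^1_\omega$, yet it forces $AP_{\kappa^1_\omega}$. And the claim that the forcing, ``being $\lambda_0$-closed, preserves the stationarity of the bad set'' is precisely Fact \ref{apstat}, whose hypothesis is approachability --- the very statement you are trying to refute; there is no ZFC preservation theorem for stationary sets of non-approachable points of cofinality $<\lambda_0$ under forcing that is merely $\lambda_0$-closed. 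The paper instead notes that $\bb{S}*\dot{\bb{P}}$ is $\lambda_0$-directed closed, so by indestructibility $\kappa^0_0$ is still supercompact in the final model, and applies Shelah's theorem (a supercompact in the interval $(\cf(\mu),\mu)$ implies $\neg AP_\mu$) directly there. (Your reduction can be repaired without indestructibility, but by a different argument: under GCH fix a universal $\vec a$; the set of non-approachable points is stationary in $V$, its stationarity survives the \emph{three}-step iteration because $\bb{S}*\dot{\bb{P}}*\dot{\bb{T}}$ has a dense $\lambda_1$-closed subset by Lemma \ref{stat_dest} and $\lambda_1$-closed forcing preserves stationary subsets of $\lambda_1$, and stationarity is downward absolute to $V^{\bb{S}*\dot{\bb{P}}}$. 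That is not the argument you gave.)

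Your treatment of (1) is also incomplete as written. The final model is $V^{\bb{S}*\dot{\bb{P}}}$ (the bookkeeping iteration is not optional: without it, a stationary $U \subseteq S^{\lambda_1}_{\geq\lambda_0}$ of $V^{\bb{S}}$ could be killed by $CU(\dot S)$ and your high case would have nothing to say about it), yet your low case lifts $j$ only through $\bb{S}$ and your high case only through $\bb{S}*CU(\dot S)$. Such lifts have domain $V^{\bb{S}}$, resp.\ $V^{\bb{S}*CU(\dot S)}$, not the final model; since $\dot{\bb{P}}$ adds new subsets of $\lambda_1$ (its generic clubs), elementarity for these lifts says nothing about an arbitrary stationary set of $V^{\bb{S}*\dot{\bb{P}}}$. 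You would need master conditions for $j(\dot{\bb{P}})$ (resp.\ for $j$ of the three-step iteration, via Lemma \ref{stat_dest}), hence $j$ witnessing $\lambda_1^+$-supercompactness as in Section \ref{densesect}; and the ``usual argument'' for pulling non-stationarity of $U$ back from the outer model is, in this paper, Fact \ref{apstat} --- again unavailable. (This step, too, can be done without $AP$: conditions of $j(\bb{S}*\dot{\bb{P}})$ lie in $M \subseteq V$, descending chains of length $<\lambda_1$ from $V^{\bb{S}*\dot{\bb{P}}}$ lie in $V$ by distributivity and hence in $M$ by closure of $M$, so the auxiliary forcing is $\lambda_1$-closed over the final model and preserves its stationary subsets of $\lambda_1$ --- but this needs to be said, since the paper's own pullback is the $AP$-based one.) The paper avoids lifting altogether: by indestructibility, $\kappa^0_{n^*}$ is still supercompact in $V[G*H]$, and $\kappa^1_{n^*}$ is still supercompact in $V[G*H*I]$ where $I$ is a $\bb{T}$-generic chosen (possible by the bookkeeping) so that $U$ stays stationary; the reflection arguments then run with embeddings that already have the correct domains, and no preservation questions arise.
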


\begin{proof}
	By first forcing with Laver's preparatory forcing \cite{laver}, we may assume that, for all $i \in \{0,1\}$ and $n < \omega$, $\kappa^i_n$ remains supercompact in any forcing extension by a $\kappa^i_n$-directed closed forcing poset. Let $\bb{S} = \bb{S}^{\lambda_1}_{\omega, \lambda_0}$, and let $\dot{S}$ be a name for the stationary subset of $S^{\lambda_1}_\omega$ added by $\bb{S}$. In $V^{\bb{S}}$, let $\bb{T} = CU(S)$.

	In $V^{\bb{S}}$, define a forcing iteration $\langle \bb{P}_\xi, \dot{\bb{Q}}_\zeta \mid \xi \leq \lambda_1^+, \zeta < \lambda_1^+ \rangle$, taken with supports of size $\kappa^1_\omega$, as follows. If $\zeta < \lambda_1^+$ and $\bb{P}_\zeta$ has been defined, choose a $\bb{P}_\zeta$-name $\dot{T}_\zeta$ for a subset of $S^{\lambda_1}_{\geq \lambda_0}$ such that: $\Vdash_{\bb{P}_\zeta * \dot{\bb{T}}} ``\dot{T}_\zeta$ is not stationary," and let $\dot{\bb{Q}}_\zeta$ be a $\bb{P}_\zeta$-name forced to be equal to $CU(\dot{T}_\zeta)$. Let $\bb{P} = \bb{P}_{\lambda^+_1}$. By standard bookkeeping arguments, we can arrange so that, in $V^{\bb{S}*\dot{\bb{P}}}$, if $X \subseteq S^{\lambda_1}_{\geq \lambda_0}$ is such that $\Vdash_{\bb{T}}``X$ is nonstationary," then $X$ is already nonstationary in $V^{\bb{S}*\dot{\bb{P}}}$.

	Let $G$ be $\bb{S}$-generic over $V$, and let $H$ be $\bb{P}$-generic over $V[G]$. We claim that $V[G*H]$ is the desired model. By Lemma \ref{stat_dest}, $\bb{S} * \dot{\bb{P}} * \dot{\bb{T}}$ has a $\lambda_1$-closed dense subset (in fact, an examination of the proof shows that it actually has a $\lambda_1$-directed closed dense subset). Thus, $\bb{S} * \dot{\bb{P}}$ is $\lambda_1$-distributive. Also, $\bb{S} * \dot{\bb{P}}$ is easily seen to have the $\lambda_1^+$-c.c., so forcing with it preserves cardinals. Let $S$ be the set enumerated by $\bigcup G$. In $V[G]$, $S$ is a stationary subset of $S^{\lambda_1}_\omega$ that does not reflect at any ordinal in $S^{\lambda_1}_{\geq \lambda_0}$. Note that $\bb{P}$ is $\lambda_0$-directed closed, so $S$ remains stationary in $V[G*H]$ and still does not reflect at any ordinals in $S^{\lambda_1}_{\geq \lambda_0}$.

	We now verify that $\mathrm{Refl}(\lambda_1)$ holds in $V[G*H]$. To this end, let $T$ be a stationary subset of $\lambda_1$. Without loss of generality, by shrinking $T$ if necessary, we can assume that there is $\mu < \kappa^1_\omega$ such that $T \subseteq S^{\lambda_1}_\mu$. First, suppose $\mu < \kappa^0_\omega$. Let $n^* < \omega$ be such that $\mu < \kappa^0_{n^*}$. Since $\bb{S}*\dot{\bb{P}}$ is $\lambda_0$-directed closed in $V$, $\kappa^0_{n^*}$ remains supercompact in $V[G*H]$. Let $j:V[G*H] \rightarrow M$ witness that $\kappa^0_{n^*}$ is $\lambda_1$-supercompact. Let $\delta = \sup(j``\lambda_1)$. As in earlier arguments, $j(T)$ reflects at $\delta$ in $M$. By elementarity, $T$ reflects at some ordinal $\alpha < \lambda_1$ in $V[G*H]$.

	Next, suppose $\lambda_0 \leq \mu < \kappa^1_\omega$. Let $n^* < \omega$ be such that $\mu < \kappa^1_{n^*}$. As $T \subseteq S^{\lambda_1}_{\geq \lambda_0}$ is stationary in $V[G*H]$, it is not the case that $\Vdash_{\bb{T}}``T$ is non-stationary.$"$ Thus, let $I$ be $\bb{T}$-generic over $V[G*H]$ such that $T$ remains stationary in $V[G*H*I]$. Since $\bb{S} * \dot{\bb{P}} * \dot{\bb{T}}$ has a $\lambda_1$-directed closed dense subset, $\kappa^1_{n^*}$ is supercompact in $V[G*H*I]$. Let $j:V[G*H*I] \rightarrow M$ witness that $\kappa^1_{n^*}$ is $\lambda_1$-supercompact. By the same arguments as in the previous case, $T$ reflects at some ordinal $\alpha < \lambda_1$ in $V[G*H*I]$. Since this statement is obviously downward absolute, it reflects in $V[G*H]$.

	It remains to show that $AP_{\kappa^1_\omega}$ fails in $V[G*H]$. However, this follows from the fact that $\kappa^0_0$ is supercompact in $V[G*H]$ and the fact that, by a result of Shelah, if $\cf(\mu) < \kappa < \mu$ and $\kappa$ is supercompact, then $AP_\mu$ fails (see \cite{cummings} for a proof).
\end{proof}

\section{Down to smaller cardinals} \label{approachSect2}

We would like to bring the results of the previous section down to smaller cardinals. By the following result of Chayut \cite{chayut}, assuming some cardinal arithmetic, $\aleph_{\omega^2 + 1}$ is the smallest we can hope for.

\begin{theorem}
	Suppose $n < \omega$, $\aleph_{\omega \cdot n}$ is strong limit, $2^{\aleph_{\omega \cdot n}} = \aleph_{\omega \cdot n + 1}$, and $\mathrm{Refl}(\aleph_{\omega \cdot n + 1})$ holds. Then $AP_{\aleph_{\omega \cdot n}}$ holds.
\end{theorem}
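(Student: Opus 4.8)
The plan is to pass to a PCF scale for $\mu$ and reduce $AP_\mu$ to the nonstationarity of the \emph{bad points} of that scale, and then to use $\mathrm{Refl}(\lambda)$ to rule out a stationary set of bad points. Write $\mu = \aleph_{\omega\cdot n}$ and $\lambda = \mu^+$, and note $\cf(\mu) = \omega$. Since $\mu$ is strong limit with $2^\mu = \mu^+$, standard cardinal arithmetic gives $\mu^\omega = 2^\mu = \lambda$ and $\lambda^{<\lambda} = \lambda$, and Shelah's PCF theory gives $\mathrm{pp}(\mu) = \mu^+$. Hence I would fix an increasing sequence $\langle \mu_k \mid k < \omega \rangle$ of regular cardinals cofinal in $\mu$, say inside the top block $(\aleph_{\omega\cdot(n-1)}, \aleph_{\omega\cdot n})$, together with a scale $\vec{f} = \langle f_\alpha \mid \alpha < \lambda \rangle$ that is $<^*$-increasing and cofinal in $\prod_k \mu_k$.

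The reduction rests on two facts about the interaction of scales and the approachability ideal, both extractable from the analysis underlying Lemma \ref{ialemma} and from the canonical-structure results of Cummings--Foreman--Magidor \cite{cummings}: the set of \emph{good} points of $\vec{f}$ lying in $S^\lambda_{>\omega}$ always belongs to $I[\lambda]$, while, because $\cf(\mu) = \omega$, a club of points of $S^\lambda_\omega$ is automatically approachable (every finite initial segment of a cofinal $\omega$-sequence is a bounded set, hence is enumerated below $\gamma$ for club-many $\gamma$). Together these show that $AP_\mu$ is equivalent to the assertion that the set $B$ of \emph{bad} points in $S^\lambda_{>\omega}$ is nonstationary. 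It therefore suffices to derive a contradiction from the assumption that $B$ is stationary.

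Assuming $B$ stationary, the next step is to \emph{localize} the badness. By Fodor and the regularity of $\lambda$ I would first shrink to a fixed cofinality, assuming $B \subseteq S^\lambda_\kappa$ for a single regular $\kappa \in (\omega, \mu)$. For each $\gamma \in B$ I would attach a canonical witness: by Shelah's trichotomy theorem applied to $\vec{f} \restriction \gamma$, either $\gamma$ is \emph{ugly} (no exact upper bound) or there is an exact upper bound $h_\gamma \in \prod_k \mu_k$ with $\cf(h_\gamma(k)) \neq \kappa$ for infinitely many $k$; in the latter case, choosing the least such coordinate $k_\gamma$ and setting $\rho_\gamma = \cf(h_\gamma(k_\gamma))$, a further pressing-down (there are only $\omega$ coordinates and fewer than $\mu < \lambda$ possible values of $\rho_\gamma$) yields a stationary $B^\ast \subseteq B$ on which $k_\gamma$ and $\rho_\gamma$ are constant, with values $k^\ast$ and $\rho^\ast \neq \kappa$. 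I would then apply $\mathrm{Refl}(\lambda)$ to $B^\ast$ to obtain $\delta$ at which it reflects; since $B^\ast \subseteq S^\lambda_\kappa$ is stationary in $\delta$, necessarily $\cf(\delta) = \kappa' > \kappa$. Thinning $\vec{f}$ so that each coordinate map $\alpha \mapsto f_\alpha(k)$ is continuous on a club, the values $h_\gamma(k^\ast)$ for $\gamma \in B^\ast \cap \delta$ cohere into $h_\delta(k^\ast) = \sup_\gamma h_\gamma(k^\ast)$, and one tries to play the stationary-many-with-$\cf = \rho^\ast$ constraint against the cofinality that the trichotomy forces on the reflected point $\delta$.

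The hard part is that this clash does not close in one blow: the uniform bad data at $\delta$ does not directly contradict anything at $\mu$, but rather transfers the configuration from the top block of coordinates down to the singular cardinal $\aleph_{\omega\cdot(n-1)}$, where the same localize-and-reflect analysis must be rerun with one fewer $\omega$-block. I expect the main obstacle to be precisely this descent and its termination. This is where the hypothesis $\mu = \aleph_{\omega\cdot n}$ with $n < \omega$ is essential: each application of reflection strips one $\omega$-block from the relevant coordinate structure, and the finiteness of $n$ is exactly what guarantees that the descent bottoms out (below $\aleph_\omega$, where no bad configuration can survive) and produces the contradiction. An $\omega$-indexed block structure, as occurs at $\aleph_{\omega^2}$, would leave the recursion without a base case, in agreement with the fact that the theorem is asserted only for cardinals of the form $\aleph_{\omega\cdot n}$. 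A clean bookkeeping of which coordinate-cofinalities reflect to which, and a verification that the process strictly decreases the block index, is the technical core I would need to supply.
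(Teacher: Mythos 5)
The paper itself does not prove this statement: it is quoted as a theorem of Chayut \cite{chayut}, so your proposal can only be judged on its own terms. Judged that way, it has a fatal gap at its very first step, and the part you describe as the technical core is not supplied at all.

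The reduction on which everything rests --- ``$AP_\mu$ is equivalent to nonstationarity of the set of bad points of a scale'' --- is not a theorem, and the direction you need is exactly the problematic one. What Cummings--Foreman--Magidor and Lemma \ref{ialemma}-style arguments give is that every approachable point of uncountable cofinality is good, i.e.\ sets in $I[\lambda]\restriction S^\lambda_{>\omega}$ consist, modulo clubs, of good points. You assert the converse, that ``the set of good points of $\vec f$ lying in $S^\lambda_{>\omega}$ always belongs to $I[\lambda]$,'' which is the statement that club-many good points are approachable. That is not extractable from the canonical-structure results; only the containment of the approachability ideal in the good ideal is known, and the converse is not known to follow from your hypotheses (it is expected to be consistently false). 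Your cardinal arithmetic does not repair this: an approachability witness for $\gamma$ must be an unbounded subset of $\gamma$ itself, whose initial segments are bounded subsets of $\lambda$ of size $<\kappa$, and there are $\lambda$-many such sets, so (unlike the bounded subsets of $\mu$) they cannot all be enumerated below $\gamma$. Consequently, even a successful reflection argument killing all stationary bad sets would prove only ``every scale on $\mu$ is good,'' which is weaker than $AP_\mu$. The reduction that actually fits the hypotheses is Shelah's characterization quoted in Section \ref{approachSect2} of this paper: for $\mu$ strong limit of countable cofinality, $AP_\mu$ holds iff $S_0(d)$ contains a club for a normal, subadditive $d:[\mu^+]^2\to\omega$; the stationary set you must destroy is $\mu^+\setminus S_0(d)$, not the bad set of a scale (and note that the $d$ one naively derives from a scale need not be normal, so these are genuinely different objects).

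Second, the descent argument is missing, not merely unpolished. You never define the mechanism by which trichotomy data at a reflection point $\delta$ ``transfers the configuration'' from the top block down to $\aleph_{\omega\cdot(n-1)}$: a scale on $\mu$ whose coordinates $\mu_k$ lie in the top block has all of its coordinate structure there, and reflection of its bad set says nothing obvious about the lower singular cardinal. Moreover, the natural way to iterate reflection does not terminate by itself: $\mathrm{Refl}(\lambda)$ does imply that the set of reflection points $R(T)$ of any stationary $T$ is stationary, reflection points of $T\subseteq S^\lambda_\kappa$ have cofinality $>\kappa$, and non-membership in $S_0(d)$ (like badness) passes upward to reflection points; but this only produces an increasing sequence $\kappa<\kappa_1<\kappa_2<\cdots$ of cofinalities carrying stationary non-approachable sets, and such a sequence can climb within a single $\omega$-block forever. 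So finiteness of $n$ does not by itself stop the recursion, and the one place where the hypothesis $n<\omega$ must do real work is precisely where your proposal has no argument. (Smaller issues: Shelah's trichotomy requires $\cf(\gamma)>\aleph_1$, and ruling out the bad/ugly alternatives typically needs $\cf(\gamma)>2^{\aleph_0}$, so small uncountable cofinalities need separate treatment; and a scale cannot in general be thinned so that each coordinate map is continuous on a club.)
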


We do not succeed in bringing the result from Section \ref{approachSect1} down to $\aleph_{\omega^2 + 1}$, but we can attain it at $\aleph_{\omega^2 \cdot 2 + 1}$. In this section we adopt the convention, for notational simplicity, that if we are working in a forcing extension $V^{\bb{P}}$ of $V$, then $G(\bb{P})$ denotes the $\bb{P}$-generic filter over $V$ used to define the extension.

\begin{theorem}
	Suppose there is an increasing sequence of supercompact cardinals of order type $\omega \cdot 2$. Then there is a forcing extension in which $\mathrm{Refl}(\aleph_{\omega^2 \cdot 2 + 1})$ holds, $AP_{\aleph_{\omega^2 \cdot 2}}$ fails, and there is a stationary subset of $S^{\aleph_{\omega^2 \cdot 2 + 1}}_\omega$ that does not reflect at any ordinals in $S^{\aleph_{\omega^2 \cdot 2 + 1}}_{\aleph_{\omega^2 + 1}}$.
\end{theorem}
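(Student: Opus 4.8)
The plan is to graft the non-reflecting-set machinery from the proof in Section~\ref{approachSect1} onto a collapsing iteration in the style of Section~\ref{globalsect}, so that the relevant cardinals become genuine $\aleph$'s, and then to extract $\neg AP_\mu$ from a scale-theoretic argument rather than from a surviving supercompact. Write the given supercompacts as $\langle \kappa_n \mid n<\omega\rangle$ and $\langle \kappa'_n \mid n<\omega\rangle$ with $\kappa_\omega := \sup_n \kappa_n < \kappa'_0$ and $\kappa'_\omega := \sup_n \kappa'_n$. After Laver preparation and assuming GCH, I would define a full-support iteration that uses the first block to turn $\kappa_\omega$ into $\aleph_{\omega^2}$ (so that $\aleph_{\omega^2+1} = (\aleph_{\omega^2})^+$) and the second block to turn $\kappa'_\omega$ into $\mu = \aleph_{\omega^2\cdot 2}$, with $\lambda = \mu^+ = \aleph_{\omega^2\cdot 2+1}$.

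The arithmetical novelty relative to Section~\ref{globalsect} is that each block must advance the aleph-index by $\omega^2$ while supplying only $\omega$ supercompacts. A plain $\mathrm{Coll}(\kappa_n,<\kappa_{n+1})$ advances the index by one and is therefore inadequate; instead I would singularize each $\kappa_{n+1}$ to cofinality $\omega$ (by a Prikry-type forcing, available since it is supercompact) with interleaved collapses, so that $\kappa_{n+1}$ becomes $\aleph_{\omega\cdot(n+1)}$ and the interval $(\kappa_n,\kappa_{n+1})$ is filled by $\omega$ new cardinals; the same device pushes the second block up to $\aleph_{\omega^2\cdot 2}$. As in Fact~\ref{lift} and Section~\ref{globalsect}, the tail past each supercompact should be (strongly) strategically closed, so that the supercompactness embeddings can be lifted later by master-condition arguments.

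With the cardinal structure fixed, the reflection half runs as before. I would add the witnessing set by forcing with $\bb{S} = \bb{S}^\lambda_{\omega,\aleph_{\omega^2+1}}$ and then iterate $CU(\dot T_\zeta)$ with supports of size $<\lambda$, using bookkeeping so that every $X \subseteq S^\lambda_{\geq \aleph_{\omega^2+1}}$ that is forced non-stationary by $CU(\dot S)$ is actually killed; by Lemma~\ref{stat_dest}, $\bb{S}*\dot{\bb{P}}*CU(\dot S)$ has a $\lambda$-closed dense subset, so $\bb{S}*\dot{\bb{P}}$ is $\lambda$-distributive and $\lambda^+$-c.c. The generic $S\subseteq S^\lambda_\omega$ lies in $S^\lambda_{<\aleph_{\omega^2+1}}$ and the tail is $\aleph_{\omega^2+1}$-directed closed, so its stationarity is preserved by closure exactly as in Section~\ref{approachSect1}, while by construction it does not reflect in $S^\lambda_{\geq \aleph_{\omega^2+1}}$, in particular not in $S^\lambda_{\aleph_{\omega^2+1}}$. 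That $\mathrm{Refl}(\lambda)$ holds is checked by the lifting arguments of Section~\ref{globalsect}: given stationary $T\subseteq S^\lambda_\kappa$ with regular $\kappa<\mu$, take a supercompact of the appropriate block lying above $\kappa$, lift its embedding over the strategically closed tail (first destroying $S$ via $CU$ when $\kappa \geq \aleph_{\omega^2+1}$, as in the two cases of Section~\ref{approachSect1}), and conclude that $T$ reflects.

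The main obstacle is $\neg AP_\mu$. In Section~\ref{approachSect1} this was immediate from Shelah's theorem via a supercompact surviving below $\mu$, but no supercompact can lie below $\mu = \aleph_{\omega^2\cdot 2}$, so that route is closed. Instead I must install a \emph{bad scale} in $\prod_n \nu_n$ for a suitable increasing sequence $\langle \nu_n \mid n<\omega\rangle$ of regular cardinals cofinal in $\mu$: a scale of length $\lambda$ whose stationary set of bad points is concentrated on some $S^\lambda_\theta$ with $\theta<\aleph_{\omega^2+1}$. This is to be arranged as part of the singularization of the second block, and stationarily many bad points give $\lambda \notin I[\lambda]$, i.e.\ $\neg AP_\mu$. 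Such a scale is consistent here precisely because $\omega^2\cdot 2$ is a limit of limit ordinals, so that Chayut's obstruction (which concerns $\aleph_{\omega\cdot n}$ for finite $n$) does not apply. The delicate point is preservation: since $\bb{S}*\dot{\bb{P}}$ is $\aleph_{\omega^2+1}$-directed closed and $\lambda$-distributive, it adds no new $\omega$-sequences below $\mu$ and no new subsets of ordinals $<\aleph_{\omega^2+1}$, so the product, the scale, and the stationary set of bad points all survive, yielding $\neg AP_\mu$ in the final model. Carrying out the singularization so as to produce this bad scale while remaining compatible with the reflection bookkeeping—so that $\mathrm{Refl}(\lambda)$ and $\neg AP_\mu$ hold simultaneously—is the crux of the argument.
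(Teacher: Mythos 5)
Your overall architecture---Prikry-type forcing with interleaved collapses for the cardinal arithmetic, then $\bb{S}^{\lambda}_{\omega,\aleph_{\omega^2+1}}$ followed by an iteration of $CU$-forcings for the non-reflecting set---has the right shape, and it is indeed what the paper does (following Magidor--Shelah). But your verification of $\mathrm{Refl}(\lambda)$ does not go through. Once Prikry-type forcing enters the construction, the tail past any supercompact is no longer strategically closed: Prikry forcing adds new $\omega$-sequences of ordinals, so it is not even $\omega$-distributive, hence not $(\omega+1)$-strategically closed, and Fact \ref{lift} simply does not apply to such tails. (In your version the situation is worse still, since you singularize the supercompacts themselves, so the embeddings you propose to lift have their critical points destroyed by the very forcing you need to lift them through.) This obstacle is exactly why the paper abandons the Section \ref{globalsect} lifting template here and instead keeps the supercompacts $\kappa^i_n$ regular (the diagonal Prikry forcing picks inaccessibles $\alpha_n$ below each of them and collapses the intervals, so that each interval contributes an $\omega\cdot 2$- or $\omega$-block of cardinals); defines $\dot{\bb{S}}$ and $\dot{\bb{Q}}$ not over the Prikry extension, as you do, but over $V^{\bb{C}^*}$, where $\bb{C}^*$ is the tail-equivalence quotient of the collapse product, so that $\bb{P}_1 * \dot{\bb{S}} * \dot{\bb{Q}}$ makes sense via the projection of $\bb{P}_1$ onto $\bb{C}^*$ and the quotient has the $\lambda_1$-c.c.; and then proves reflection inside intermediate models such as $V^{\bb{C}^0_{n^*+1}\times(\bb{C}^1_0 * \dot{\bb{S}} * \dot{\bb{Q}})}$, where supercompactness survives by Laver indestructibility, using measure-one sets of $X \in \mathcal{P}_{\kappa}(\lambda_1)$, an absorption lemma producing a single condition below all lower-part extensions lying in an elementary submodel, and the Prikry property (Claim \ref{unbddSubset}) to see that $\hat{T} \cap X$ remains stationary in $\sup(X)$. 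The placement of $\bb{S} * \dot{\bb{Q}}$ over $V^{\bb{C}^*}$ rather than over the full Prikry extension is essential to making these intermediate models and the directed-closure arguments available; it is not a cosmetic choice.

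The second gap is that you defer $\neg AP_{\aleph_{\omega^2\cdot 2}}$---which you yourself call the crux---without an argument. In fact nothing needs to be ``installed.'' The paper uses Shelah's characterization of approachability via a normal, subadditive $d:[\kappa^+]^2\rightarrow\omega$ and the set $S_0(d)$: fix such a $d$ in $V$; by supercompactness of $\kappa^1_n$ in $V$ (witnessed inside the appropriate collapse extension), for measure-one many $\alpha < \kappa^1_n$ the set $S^{\lambda_1}_{\alpha^{+\omega+1}} \setminus S_0(d)$ is stationary; the intermediate-model and chain-condition arguments show this stationarity survives into the final model; and Claim \ref{unbddSubset} shows that at these cofinalities $S_0(d)$ is absolute between $V$ and the final model, since any unbounded witnessing sets $A, B$ contain ground-model unbounded subsets. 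So the non-approachable points already exist in $V$ and merely survive the forcing; no bad scale is constructed during the singularization, and the compatibility problem you flag between scale construction and reflection bookkeeping never arises.
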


\begin{proof}
	We follow to a large extent Section 3 of \cite{magidorshelah}. Assume GCH. Let $\langle \kappa^0_n \mid n < \omega \rangle$ and $\langle \kappa^1_n \mid n < \omega \rangle$ be two increasing sequences of supercompact cardinals such that, for all $n < \omega$, $\kappa^0_n < \kappa^1_0$. Assume that each $\kappa^i_n$ is indestructible under $\kappa^i_n$-directed closed forcing. For $i < 2$, let $\kappa^i_\omega = \sup(\{\kappa^i_n \mid n < \omega \})$, and let $\lambda_i = (\kappa^i_\omega)^+$. For notational simplicity, let $\kappa^0_{-1} = \omega_1$ and $\kappa^1_{-1} = \lambda_0$. For $i < 2$ and $n < \omega$, let $\bb{C}^i_n = \prod_{m \geq n} \mathrm{Coll}((\kappa^i_{m-1})^{++}, < \kappa^i_m)$, where the product is taken with full support.

	Define an equivalence relation on $\bb{C}^1_0$ by declaring that $c_0 \equiv c_1$, where $c_i = \langle c_i(n) \mid n < \omega \rangle$, if $c_0(n) = c_1(n)$ for all but finitely many $n < \omega$. Let $\bb{C}^*$ be the forcing notion whose conditions are equivalence classes from $\bb{C}^1_0$ and such that $[c_1] \leq [c_0]$ if, for all but finitely many $n < \omega$, $c_1(n) \leq c_0(n)$. The following is proven in \cite{magidorshelah}.

	\begin{proposition}
		For all $n < \omega$, there is a projection from $\bb{C}^1_n$ onto $\bb{C}^*$. Hence, $\bb{C}^*$ is $\lambda_1$-distributive. Moreover, if $G$ is $\bb{C}^*$-generic over $V$ and $n < \omega$, then $\bb{C}^1_n/G$ has the $\lambda_1$-c.c.
	\end{proposition}

	In $V^{\bb{C}^*}$, let $\bb{S} = \bb{S}^{\lambda_1}_{\omega, \lambda_0}$. Let $\dot{S}$ be an $\bb{S}$-name for the stationary subset of $S^{\lambda_1}_\omega$ added by $\bb{S}$ and, in $V^{\bb{C}^* * \dot{\bb{S}}}$, let $\bb{T} = CU(\dot{S})$. Also in $V^{\bb{C}^* * \dot{\bb{S}}}$, let $\bb{Q} = \bb{Q}_{\lambda^+}$ be an iteration of length $\lambda_1^+$ with supports of size $\kappa^1_\omega$ of forcings to destroy certain stationary subsets of $S^{\lambda_1}_{\geq \lambda_0}$. As in Section \ref{destroySection}, we can arrange so that, in $V^{\bb{C^*}}$, $\bb{S} * \dot{\bb{Q}} * \dot{\bb{T}}$ has a dense $\lambda_1$-closed subset and if, in $V^{\bb{C}^* * \dot{\bb{S}} * \dot{\bb{Q}}}$, $T \subseteq S^{\lambda_1}_{\geq \lambda_0}$ and $\Vdash_{\bb{T}}``T$ is non-stationary," then $T$ is already non-stationary in $V^{\bb{C}^* * \dot{\bb{S}} * \dot{\bb{Q}}}$. Since $\bb{T}$ is weakly homogeneous, we in fact get that, for all $T \subseteq S^{\lambda_1}_{\geq \lambda_0} \cap V^{\bb{C}^* * \dot{\bb{S}} * \dot{\bb{Q}}}$, if $T$ is stationary, then $\Vdash_{\bb{T}}``T$ is stationary."

	For $n < \omega$, note that $\bb{C}^0_{n+1} \times (\bb{C}^1_0 * \dot{\bb{S}} * \dot{\bb{Q}})$ is $(\kappa^0_n)^{++}$-directed closed. Let $\dot{F}^0_n$ denote a name for a fine, normal ultrafilter on $\mathcal{P}_{\kappa^0_n}(\lambda_1)$ in $V^{\bb{C}^0_{n+1} \times (\bb{C}^1_0 * \dot{\bb{S}} * \dot{\bb{Q}})}$. Let $U^0_n$ be its projection to a normal ultrafilter on $\kappa^0_n$. Note that $U^0_n \in V$ and, by the homogeneity of the forcing, we may assume that the trivial condition forces $U^0_n$ to be the projection of a fine, normal ultrafilter on $\mathcal{P}_{\kappa^0_n}(\lambda_1)$. Similarly, define a normal ultrafilter $U^1_n$ on $\kappa^1_n$ such that the trivial condition in $\bb{C}^1_{n+1} * \dot{\bb{S}} * \dot{\bb{Q}} * \dot{\bb{T}}$ forces $U^1_n$ to be the projection of a fine, normal ultrafilter on $\mathcal{P}_{\kappa^1_n}(\lambda_1)$.

	For $i < 2$ and $n < \omega$, let $M^i_n$ denote the transitive collapse of $\mathrm{Ult}(V, U^i_n)$, and let $j^i_n: V \rightarrow M^i_n$ be the associated embedding. Let $\bb{T}^0_n$ denote $\mathrm{Coll}((\kappa^0_n)^{+\omega \cdot 2 + 2}, < j^0_n(\kappa^0_n))$ as defined in $M^0_n$. $M \models ``$there are $j^0_n(\kappa^0_n)$ maximal antichains of $\bb{T}^0_n$." Since $|j^0_n(\kappa^0_n)| = (\kappa^0_n)^+$ and $\bb{T}^0_n$ is $(\kappa^0_n)^+$-closed, we can build in $V$ a $\bb{T}^0_n$-generic filter over $M^0_n$. Let $G^0_n$ be such a filter. Similarly, let $\bb{T}^1_n$ denote $\mathrm{Coll}((\kappa^1_n)^{+\omega + 2}, < j^1_n(\kappa^1_n))$ as defined in $M^1_n$, and fix $G^1_n$, a $\bb{T}^1_n$-generic filter over $M^1_n$.

	We now define diagonal Prikry forcing notions $\bb{P}_0$ and $\bb{P}_1$, which are slightly modified versions of the forcing in \cite{magidorshelah}. Elements of $\bb{P}_0$ are of the form $p = \langle \alpha^p_0,\ldots,\alpha^p_{n-1}, \langle A^p_k \mid n \leq k < \omega \rangle, g^p_0,\ldots,g^p_n,f^p_0,\ldots,f^p_{n-1},\langle F^p_k \mid n \leq k < \omega \rangle,\langle g^p_k \mid n < k < \omega \rangle \rangle,$ where
	\begin{itemize}
		\item{For all $i < n$, $\alpha^p_i$ is inaccessible and $\kappa^0_{i-1} < \alpha^p_i < \kappa^0_i$.}
		\item{For all $n \leq k < \omega$, $A^p_k \in U^0_k$ and, for all $\alpha \in A^p_k$, $\alpha$ is inaccessible.}
		\item{For all $i < n$, $g^p_i \in \mathrm{Coll}((\kappa^0_{i-1})^{++}, < \alpha^p_i)$ and $f^p_i \in \mathrm{Coll}((\alpha^p_i)^{+\omega \cdot 2 + 2}, < \kappa^0_i)$.}
		\item{For all $n \leq k < \omega$, $g^p_k \in \mathrm{Coll}((\kappa^0_{k-1})^{++}, < \kappa^0_k)$ is such that, for all $\alpha \in A^p_k$, $g^p_k \in \mathrm{Coll}((\kappa^0_{k+1})^{++}, < \alpha)$.}
		\item{For all $n \leq k < \omega$, $F^p_k$ is a function with domain $A^p_k$ such that, for all $\alpha \in A^p_k$, $F^p_k(\alpha) \in \mathrm{Coll}(\alpha^{+\omega \cdot 2 + 2}, < \kappa^0_k)$ and $j^0_k(F^p_k)(\kappa^0_k) \in G^0_k$.}
	\end{itemize}

	$n$ is the length of $p$ and is denoted $\ell(p)$. If $q, p \in \bb{P}_0$, then $q \leq p$ if:
	\begin{itemize}
		\item{$\ell(q) \geq \ell(p)$.}
		\item{For all $i < \ell(p)$, $\alpha^q_i = \alpha^p_i$ and $f^q_i \leq f^p_i$.}
		\item{For all $i < \omega$, $g^q_i \leq g^p_i$.}
		\item{For all $\ell(q) \leq k < \omega$, $A^q_k \subseteq A^p_k$ and, for all $\alpha \in A^q_k$, $F^q_k(\alpha) \leq F^p_k(\alpha)$.}
		\item{For all $\ell(p) \leq k < \ell(q)$, $\alpha^q_k \in A^p_k$ and $f^q_k \leq F^p_k(\alpha^q_k)$.}
	\end{itemize}

	$\bb{P}_1$ is defined in the same way, with the following changes:
	\begin{itemize}
		\item{For all $-1 \leq i < \omega$, every occurrence of $\kappa^0_i$ in the definition of $\bb{P}_0$ is replaced by $\kappa^1_i$ in the definition of $\bb{P}_1$ and every occurrence of $U^0_i$ is replaced by $U^1_i$.}
		\item{If $p \in \bb{P}_1$ and $i < \ell(p)$, then $f^p_i \in \mathrm{Coll}((\alpha^p_i)^{+\omega + 2}, < \kappa^1_i)$. If $\ell(p) \leq k < \omega$ and $\alpha \in A^p_k$, then $F^p_k(\alpha) \in \mathrm{Coll}(\alpha^{+\omega+2}, < \kappa^1_j)$ and $j^1_k(F^p_k)(\kappa^1_k) \in G^1_k$.}
	\end{itemize}

	Following \cite{magidorshelah}, if $p \in \bb{P}_i$, we call $\langle \alpha^p_k \mid k < \ell(p) \rangle$ its $\alpha$-part, $\langle A^p_k \mid \ell(p) \leq k < \omega \rangle$ its $A$-part, $\langle f^p_k \mid k < \ell(p) \rangle$ its $f$-part, $\langle g^p_k \mid k \leq \ell(p) \rangle$ its $g$-part, $\langle F^p_k \mid \ell(p) \leq k < \omega \rangle$ its $F$-part, and $\langle g^p_k \mid \ell(p) < k < \omega \rangle$ its $C$-part. The $\alpha$-part, $g$-part, and $f$-part together comprise the lower part of $p$, denoted $a(p)$. If $k \leq \ell(p)$, let $p \restriction k$ denote $\langle \langle \alpha^p_m \mid m < k \rangle, \langle g^p_m \mid m \leq k \rangle, \langle f^p_m \mid m < k \rangle \rangle$. Note that $p \restriction \ell(p) = a(p)$. If $q,p \in \bb{P}_i$, then we say $q$ is a length-preserving extension of $p$ if $q \leq p$ and $\ell(q) = \ell(p)$. If $k \leq \ell(p)$, then $q$ is a $k$-length-preserving extension of $p$ if $q$ is a length-preserving extension of $p$ and $q \restriction k = p \restriction k$. We say $q$ is a trivial extension of $p$ if it is an $\ell(p)$-length-preserving extension of $p$.

	$\bb{P}_i$ satisfies a form of the Prikry lemma. A proof can be found in \cite{magidorshelah}.

	\begin{lemma}
		Let $p \in \bb{P}_i$, let $k \leq \ell(p)$, and let $D$ be a dense open subset of $\bb{P}_i$. Then there is a $k$-length-preserving extension $q \leq p$ such that, if $q^* \leq q$ and $q^* \in D$, then, if $q^{**} \leq q$, $\ell(q^{**}) = \ell(q^*)$, and $q^{**} \restriction k = q^* \restriction k$, then $q^{**} \in D$.
	\end{lemma}

	The Prikry lemma can be applied to see that the only cardinals below $\lambda_i$ that are collapsed by forcing with $\bb{P}_i$ are those explicitly in the scope of the Levy collapses interleaved into the forcing notion. In particular, the following claim immediately follows.

	\begin{claim} \label{unbddSubset}
		Let $k < \omega$. In $V^{\bb{P}_0}$, if $\cf(\alpha) = (\alpha^p_k)^{+\omega \cdot 2 + 1}$ (equivalently, $\cf^V(\alpha) = (\alpha^p_k)^{+\omega \cdot 2 + 1}$) for some $p \in G(\bb{P}_0)$ and $A$ is an unbounded subset of $\alpha$, then there is an unbounded $B \subseteq A$ such that $B \in V$. Similarly, in $V^{\bb{P}_1}$, if $\cf(\alpha) = (\alpha^p_k)^{+\omega + 1}$ for some $p \in G(\bb{P}_1)$ and $A$ is an unbounded subset of $\alpha$, then there is an unbounded $B \subseteq A$ such that $B \in V$.
	\end{claim}

	Other arguments, found in \cite{magidorshelah}, imply that all cardinals $\geq \lambda_i$ are preserved as well. Note that, by the Prikry lemma, forcing with $\bb{P}_1$ does not add any new bounded subsets of $\lambda_0^{++}$, so $\bb{P}_0$ has the same basic properties in $V^{\bb{P}_1}$ as it has in $V$.

	The following is proven in \cite{magidorshelah}.

	\begin{proposition}
		There is a projection from $\bb{P}_1$ onto $\bb{C}^*$ such that the quotient forcing has the $\lambda_1$-c.c.
	\end{proposition}

	Our final model will be $V^{\bb{P}_0 \times (\bb{P}_1 * \dot{\bb{S}} * \dot{\bb{Q}})}$. Let $S$ be the subset of $S^{\lambda_1}_\omega$ added by $\bb{S}$. In $V^{\bb{C}^* * \dot{\bb{S}}}$, $S$ is stationary and does not reflect at any ordinals in $S^{\lambda_1}_{\geq \lambda_0}$. In $V^{\bb{C}^* * \dot{\bb{S}}}$, $\bb{Q}$ is $\lambda_0$-closed, so $S$ remains stationary in $V^{\bb{C}^* * \dot{\bb{S}} * \dot{\bb{Q}}}$. In $V^{\bb{C}^*}$, $\bb{P}_1/G(\bb{C}^*)$ has the $\lambda_1$-c.c. $\bb{S} * \dot{\bb{Q}}$ is the projection of a forcing poset with a dense $\lambda_1$-closed subset (namely $\bb{S} * \dot{\bb{Q}} * \dot{\bb{T}}$), so, by Easton's Lemma, $\bb{P}_1/G(\bb{C}^*)$ has the $\lambda_1$-c.c. in $V^{\bb{C}^* * \dot{\bb{S}} * \dot{\bb{Q}}}$, so $S$ is stationary in $V^{\bb{P}_1 * \dot{\bb{S}} * \dot{\bb{Q}}}$. Finally, $|\bb{P}_0| = \lambda_0$, so $S$ remains stationary in $V^{\bb{P}_0 \times (\bb{P}_1 * \dot{\bb{S}} * \dot{\bb{Q}})}$.

	We now verify that every stationary subset of $\lambda_1$ reflects in $V^{\bb{P}_0 \times (\bb{P}_1 * \dot{\bb{S}} * \dot{\bb{Q}})}$. Thus, let $T \in V^{\bb{P}_0 \times (\bb{P}_1 * \dot{\bb{S}} * \dot{\bb{Q}})}$ be a stationary subset of $\lambda_1$, and let $\dot{T}$ be a name for it. Let $(i,n)$ be the lexicographically least pair such that $T \cap S^{\lambda_1}_{<\kappa^i_n}$ is stationary. By shrinking $T$ if necessary, we may assume that, for some $(p_0,(p_1,\dot{s},\dot{q})) \in G(\bb{P}_0 \times (\bb{P}_1 * \dot{\bb{S}} * \dot{\bb{Q}}))$, $(p_0, (p_1,\dot{s},\dot{q}))$ forces that $\dot{T}$ is a stationary subset of $S^{\lambda_1}_{<\kappa^i_n}$. Moreover, if $i = 1$, we may assume that $(p_0, (p_1,\dot{s},\dot{q}))$ forces $\dot{T}$ to be a stationary subset of $S^{\lambda_1}_{\geq \lambda_0}$ as well. For each $\alpha \in T$, let $(p^\alpha_0, (p^\alpha_1,\dot{s}^\alpha,\dot{q}^\alpha)) \in G(\bb{P}_0 \times (\bb{P}_1 * \dot{\bb{S}} * \dot{\bb{Q}}))$, with $(p^\alpha_0, (p^\alpha_1,\dot{s}^\alpha,\dot{q}^\alpha)) \leq (p_0, (p_1,\dot{s},\dot{q}))$, force that $\alpha \in \dot{T}$. Since $|\bb{P}_0| = \lambda_0$ and there are only $\kappa^1_\omega$ lower parts in $\bb{P}_1$, we may assume there are $p^*_0 \in \bb{P}_0$ and $a^*$, a lower part for $\bb{P}_1$, such that, for every $\alpha \in T$, $p^\alpha_0 = p^*_0$ and $a(p^\alpha_1) = a^*$. We may also assume that $p_0 = p^*_0$, $a(p_1) = a^*$, and $(p_0, (p_1,\dot{s},\dot{q}))$ forces that $T^* := \{\alpha \mid$ for some $(p'_0, (p'_1,\dot{s}',\dot{q}')) \in G(\bb{P}_0 \times (\bb{P}_1 * \dot{\bb{S}} * \dot{\bb{Q}}))$ such that $p'_0 = p_0$, $p'_1$ is a trivial extension of $p_1$, and $p'_1 \Vdash ``(\dot{s}', \dot{q}') \leq (\dot{s}, \dot{q})"$, $(p'_0, (p'_1,\dot{s}',\dot{q}'))$ forces that $\alpha \in \dot{T}\}$ is stationary. Let $\dot{T}^*$ be a name for $T^*$. Finally, we may assume that $\ell(p_i) \geq n$. We will find an extension of $(p_0, (p_1, \dot{s}, \dot{q}))$ forcing that $\dot{T}$ reflects. There are two cases to consider.

	\textbf{Case 1: i = 0.} Let $n^* = \ell(p_0)$. Move to $V^{\bb{C}^0_{n^*+1} \times (\bb{C}^1_0 * \dot{\bb{S}} * \dot{\bb{Q}})}$, requiring that the $C$-part of $p_0$ is in $G(\bb{C}^0_{n^*+1})$ and $(\langle g^{p_1}_k \mid k < \omega \rangle, \dot{s}, \dot{q}) \in G(\bb{C}^1_0 * \dot{\bb{S}} * \dot{\bb{Q}})$. For $n^* + 1 \leq m < \omega$, let $G(\bb{C}^0_m)$ be the generic filter induced by $G(\bb{C}^0_{n^*+1})$. Similarly, for $m < \omega$, let $G(\bb{C}^1_m)$ be the generic filter induced by $G(\bb{C}^1_0)$. Let $\bb{P}^*$ be the set of $(r_0, r_1) \in \bb{P}_0 \times \bb{P}_1$ such that $(r_0, r_1) \leq (p_0, p_1)$, the $C$-part of $r_0$ is in $G(\bb{C}^0_{\ell(r_0) + 1})$, and the $C$-part of $r_1$ is in $G(\bb{C}^1_{\ell(r_1) + 1})$. The proof of Lemma 6 from Section 3 of \cite{magidorshelah} shows that forcing with $\bb{P}^*$ over $V^{\bb{C}^0_{n^*+1} \times (\bb{C}^1_0 * \dot{\bb{S}} * \dot{\bb{Q}})}$ adds a $V$-generic filter for $\bb{P}_0 \times \bb{P}_1$. In $V^{\bb{C}^0_{n^*+1} \times (\bb{C}^1_0 * \dot{\bb{S}} * \dot{\bb{Q}})}$, let $\hat{T}$ be the set of $\alpha < \lambda_1$ such that, for some $r_1$ such that $r_1$ is a trivial extension of $p_1$ and $(p_0, r_1) \in \bb{P}^*$, and, for some $(\dot{r}_2, \dot{r}_3)$ such that $(p_0, r_1) \Vdash ``(\dot{r}_2, \dot{r}_3) \in G(\dot{\bb{S}} * \dot{\bb{Q}})$ and $(\dot{r}_2, \dot{r}_3) \leq (\dot{s}, \dot{q})"$, we have $(p_0, (r_1, \dot{r}_2, \dot{r}_3)) \Vdash ``\alpha \in \dot{T^*}"$

	\begin{lemma}
		$\hat{T}$ is stationary.
	\end{lemma}

	\begin{proof}
		Suppose not. Then, in $V^{\bb{C}^0_{n^*+1} \times (\bb{C}^1_0 * \dot{\bb{S}} * \dot{\bb{Q}})}$, there is a club $C$ in $\lambda_1$ such that $C \cap \hat{T} = \emptyset$. Force with $\bb{P^*}$. in $V^{(\bb{C}^0_{n^*+1} \times (\bb{C}^1_0 * \dot{\bb{S}} * \dot{\bb{Q}})) * \bb{P^*}}$, $\hat{T} \supseteq T^*$. Thus, $C \cap T^* = \emptyset$. $V^{\bb{C}^0_{n^*+1} \times (\bb{C}^1_0 * \dot{\bb{S}} * \dot{\bb{Q}})}$ is a forcing extension of $V^{(\bb{C}^* * \dot{\bb{S}} * \dot{\bb{Q}})}$ by a $\lambda_1$-c.c. forcing extension, so there is a club $D \subseteq C$ such that $D \in V^{(\bb{C}^* * \dot{\bb{S}} * \dot{\bb{Q}})}$. But $V^{(\bb{C}^* * \dot{\bb{S}} * \dot{\bb{Q}})} \subseteq V^{\bb{P}_0 \times (\bb{P}_1 * \dot{\bb{S}} * \dot{\bb{Q}})}$ and $T^*$ is stationary in $V^{\bb{P}_0 \times (\bb{P}_1 * \dot{\bb{S}} * \dot{\bb{Q}})}$. This contradicts the fact that $D \in V^{\bb{P}_0 \times (\bb{P}_1 * \dot{\bb{S}} * \dot{\bb{Q}})}$ is club in $\lambda_1$ and disjoint from $T^*$.
	\end{proof}

	In $V^{\bb{C}^0_{n^*+1} \times (\bb{C}^1_0 * \dot{\bb{S}} * \dot{\bb{Q}})}$, $\kappa^0_{n^*}$ remains supercompact, and $\lambda_1 = (\kappa^0_{n^*})^{+\omega \cdot 2 + 1}$. Fix a fine, normal measure $U^*$ on $\mathcal{P}_{\kappa^0_{n^*}}(\lambda_1)$ such that $U^*$ projects to $U^0_{n^*}$. Let $\theta$ be a sufficiently large regular cardinal, and let $\mathfrak{A}$ denote an expansion of $(H(\theta), \in)$ by a well-ordering of $H(\theta)$ and constants for all relevant sets. The following are standard applications of supercompactness.

	\begin{lemma}
		Let $E_0 = \{X \in \mathcal{P}_{\kappa^0_{n^*}}(\lambda_1) \mid $for some $\mathfrak{B} \prec \mathfrak{A}$, we have $X = \mathfrak{B} \cap \lambda_1$, $|X| = |\mathfrak{B}|$, and $X \in \bigcap_{A \in U^* \cap \mathfrak{B}} A\}$. Then $E_0 \in U^*$.
	\end{lemma}

	\begin{lemma}
		Let $E_1 = \{X \in \mathcal{P}_{\kappa^0_{n^*}}(\lambda_1) \mid X \cap \kappa^0_{n^*}$ is inaccessible, $\mathrm{otp}(X) = (X \cap \kappa^0_{n^*})^{+\omega \cdot 2 + 1}$, and $\hat{T} \cap X$ is stationary in $\sup(X)\}$. Then $E_1 \in U^*$.
	\end{lemma}

	The next lemma follows from the proof of Lemma 13 in \cite{magidorshelah}.

	\begin{lemma} \label{absorbLemma}
		Let $X \in E_0 \cap E_1$ such that $X \cap \kappa^0_{n^*} \in A^{p_0}_{n^*}$. Let $\mathfrak{B} \prec \mathfrak{A}$ witness that $X \in E_0$. Then there is $(p^*_0, p^*_1) \in \bb{P}^*$ such that:
		\begin{enumerate}
			\item{$(p^*_0,p^*_1) \leq (p_0,p_1)$.}
			\item{$\ell(p^*_0) = n^* + 1$ and $p^*_1$ is a trivial extension of $p_1$.}
			\item{$\alpha^{p^*_0}_{n^*} = X \cap \kappa^0_{n^*}$.}
			\item{If $(p_0,p'_1) \in \bb{P}^* \cap \mathfrak{B}$ and $p'_1$ is a trivial extension of $p_1$, then $(p^*_0,p^*_1) \leq (p_0,q'_1)$.}
		\end{enumerate}
	\end{lemma}

	Let $X, \mathfrak{B}$, and $(p^*_0,p^*_1)$ be as given in Lemma \ref{absorbLemma}. For every $\gamma \in \hat{T} \cap X$, there is $p^1_\gamma$, a trivial extension of $p_1$, and $(\dot{s}_\gamma',\dot{q}_\gamma')$ such that $(p_0, p^1_\gamma)$ forces that $(\dot{s}_\gamma',\dot{q}_\gamma') \in G(\dot{\bb{S}}*\dot{\bb{Q}})$ and $(p_0, (p^1_\gamma, \dot{s}_\gamma', \dot{q}_\gamma'))$ forces that $\gamma \in \dot{T}^*$. By elementarity of $\mathfrak{B}$, such a $p^1_\gamma$ exists in $\mathfrak{B}$, and hence, for all $\gamma \in \hat{T} \cap X$, $(p^*_0, (p^*_1, \dot{s}'_\gamma, \dot{q}'_\gamma)) \Vdash ``\gamma \in \dot{T}^*"$. Since $\hat{T} \cap X \in V^{\bb{C}^0_{n^*+1} \times (\bb{C}^1_0 * \dot{\bb{S}} * \dot{\bb{Q}})}$ and has size less than $\kappa^0_{n^*}$, we have $\{(\dot{s}'_\gamma, \dot{q}'_\gamma) \mid \gamma \in \hat{T} \cap X\} \in V$. In $V^{\bb{C}^*}$, $\bb{S} * \dot{\bb{Q}}$ is $\kappa^0_{n^*}$-directed closed. Thus, we can find names $\dot{s}'$ and $\dot{q}'$ such that, for all $\gamma \in \hat{T} \cap X$, $(p^*_0, (p^*_1, \dot{s}', \dot{q}')) \leq (p^*_0, (p^*_1, \dot{s}'_\gamma, \dot{q}'_\gamma))$. Thus, $(p^*_0, (p^*_1, \dot{s}', \dot{q}')) \Vdash ``\dot{T} \cap X \supseteq \hat{T} \cap X"$. Since no cardinals between $X \cap \kappa^0_{n^*}$ and $(X \cap \kappa^0_{n^*})^{+ \omega \cdot 2 + 2}$ are collapsed, an application of Claim \ref{unbddSubset} yields that $\hat{T} \cap X$ remains stationary in $\mathrm{sup}(X)$ after forcing over $V^{\bb{C}^0_{n^*+1} \times (\bb{C}^1_0 * \dot{\bb{S}} * \dot{\bb{Q}})}$ with $\bb{P}^*$ below $(p^*_0,p^*_1)$. Hence, $T \cap X$ is stationary in $\sup(X)$ in $V^{\bb{P}_0 \times (\bb{P}_1 * \dot{\bb{S}} * \dot{\bb{Q}})}$ after forcing below $(q_0, (q_1, \dot{s}', \dot{q}'))$, so $(q_0, (q_1, \dot{s}', \dot{q}')) \Vdash``\dot{T}$ reflects."

	\textbf{Case 2: i = 1.} Let $n^* = \ell(p_1)$. Move to $V^{\bb{C}^1_{n^*+1} * \dot{\bb{S}} * \dot{\bb{Q}}}$, requiring that the $C$-part of $p_1$ is in $G(\bb{C}^1_{n^*+1})$. Let $\bb{P}^*$ be the set of $(r_0,r_1) \in \bb{P}_0 \times \bb{P}_1$ such that $(r_0,r_1) \leq (p_0,p_1)$ and the $C$-part of $r_1$ is in $G(\bb{C}^1_{\ell(r_1)+1})$. As before, forcing with $\bb{P}^*$ over $V^{\bb{C}^1_{n^*+1} * \dot{\bb{S}} * \dot{\bb{Q}}}$ adds a $V$-generic filter for $\bb{P}_0 \times \bb{P}_1$. Let $\hat{T}$ be the set of $\alpha < \lambda_1$ such that, for some $r_1$ such that $r_1$ is a trivial extension of $p_1$ and $(p_0, r_1) \in \bb{P}^*$, and for some $(\dot{r}_2, \dot{r}_3)$ such that $(p_0, r_1) \Vdash ``(\dot{r}_2, \dot{r}_3) \in G(\dot{S} * \dot{Q})$ and $(\dot{r}_2, \dot{r}_3) \leq (\dot{s}, \dot{q})"$, we have $(p_0, (r_1, \dot{r}_2, \dot{r}_3)) \Vdash ``\alpha \in \dot{T^*}"$. As in Case 1, $\hat{T}$ is stationary in $V^{\bb{C}^1_{n^*+1} * \dot{\bb{S}} * \dot{\bb{Q}}}$.

	\begin{lemma}
		$\hat{T}$ is stationary in $V^{\bb{C}^1_{n^*+1} * \dot{\bb{S}} * \dot{\bb{Q}} * \dot{\bb{T}}}$.
	\end{lemma}

	\begin{proof}
		Suppose not, and let $D \in V^{\bb{C}^1_{n^*+1} * \dot{\bb{S}} * \dot{\bb{Q}} * \dot{\bb{T}}}$ be club in $\lambda_1$ such that $D \cap \hat{T} = \emptyset$. Since $V^{\bb{C}^1_{n^*+1} * \dot{\bb{S}} * \dot{\bb{Q}} * \dot{\bb{T}}}$ is a forcing extension of $V^{\bb{C}^* * \dot{\bb{S}} * \dot{\bb{Q}} * \dot{\bb{T}}}$ by a $\lambda_1$-c.c. forcing, there is a club $D' \subseteq D$, $D' \in V^{\bb{C}^* * \dot{\bb{S}} * \dot{\bb{Q}} * \dot{\bb{T}}}$, such that $\Vdash_{\bb{C}^1_{n^*+1}/G(\bb{C}^*)}``\dot{\hat{T}} \cap D' = \emptyset"$. In $V^{\bb{C}^* * \dot{\bb{S}} * \dot{\bb{Q}}}$, let $\hat{\hat{T}} = \{\alpha < \lambda_1 \mid$for some $c \in \bb{C}^1_{n^*+1}/G(\bb{C}^*)$, $c \Vdash ``\alpha \in \dot{\hat{T}}"\}$. Then, in $V^{\bb{C}^* * \dot{\bb{S}} * \dot{\bb{Q}} * \dot{\bb{T}}}$, $\Vdash_{\bb{C}^1_{n^*+1}/G(\bb{C}^*)}``\dot{\hat{T}} \subseteq \hat{\hat{T}}"$, and $\hat{\hat{T}} \cap D' = \emptyset$. Thus, $\hat{\hat{T}}$ is a subset of $S^{\lambda_1}_{\geq \lambda_0}$ that is non-stationary in $V^{\bb{C}^* * \dot{\bb{S}} * \dot{\bb{Q}} * \dot{\bb{T}}}$ and is thus already non-stationary in $V^{\bb{C}^* * \dot{\bb{S}} * \dot{\bb{Q}}}$. But $V^{\bb{C}^* * \dot{\bb{S}} * \dot{\bb{Q}}} \subseteq V^{\bb{P}_0 \times (\bb{P}_1 * \dot{\bb{S}} * \dot{\bb{Q}})}$ and, in $V^{\bb{P}_0 \times (\bb{P}_1 * \dot{\bb{S}} * \dot{\bb{Q}})}$, $\hat{\hat{T}} \supseteq T$, contradicting the fact that $T$ is stationary in $V^{\bb{P}_0 \times (\bb{P}_1 * \dot{\bb{S}} * \dot{\bb{Q}})}$.
	\end{proof}

	The rest of the proof is much as in Case 1. We provide some details for completeness. In $V^{\bb{C}^1_{n^*+1} * \dot{\bb{S}} * \dot{\bb{Q}} * \dot{\bb{T}}}$, $\kappa^1_{n^*}$ is supercompact and $\lambda_1 = (\kappa^1_{n^*})^{+\omega + 1}$. Fix a fine, normal measure $U^*$ on $\mathcal{P}_{\kappa^1_{n^*}}(\lambda_1)$ such that $U^*$ projects to $U^1_{n^*}$. Let $\theta$ be a sufficiently large, regular cardinal, and let $\mathfrak{A}$ be an expansion of $(H(\theta), \in)$ by a well-ordering and constants for all relevant sets. The next lemmas are as before.

	\begin{lemma}
		Let $E_0 = \{X \in \mathcal{P}_{\kappa^1_{n^*}}(\lambda_1) \mid $for some $\mathfrak{B} \prec \mathfrak{A}$, we have $X = \mathfrak{B} \cap \lambda_1$, $|X| = |\mathfrak{B}|$, and $X \in \bigcap_{A \in U^* \cap \mathfrak{B}} A\}$. Then $E_0 \in U^*$.
	\end{lemma}

	\begin{lemma}
		Let $E_1 = \{X \in \mathcal{P}_{\kappa^1_{n^*}}(\lambda_1) \mid X \cap \kappa^0_{n^*}$ is inaccessible, $\mathrm{otp}(X) = (X \cap \kappa^0_{n^*})^{+\omega + 1}$, and $\hat{T} \cap X$ is stationary in $\sup(X)\}$. Then $E_1 \in U^*$.
	\end{lemma}

	\begin{lemma} \label{absorbLemma2}
		Let $X \in E_0 \cap E_1$ such that $X \cap \kappa^1_{n^*} \in A^{p_1}_{n^*}$. Let $\mathfrak{B} \prec \mathfrak{A}$ witness that $X \in E_0$. Then there is $(p_0, p^*_1) \in \bb{P}^*$ such that:
		\begin{enumerate}
			\item{$(p_0,p^*_1) \leq (p_0,p_1)$.}
			\item{$\ell(p^*_1) = n^* + 1$.}
			\item{$\alpha^{p^*_1}_{n^*} = X \cap \kappa^0_{n^*}$.}
			\item{If $(p_0,p'_1) \in \bb{P}^* \cap \mathfrak{B}$ and $p'_1$ is a trivial extension of $p_1$, then $(p_0,p^*_1) \leq (p_0,p'_1)$.}
		\end{enumerate}
	\end{lemma}

	Let $X, \mathfrak{B}$, and $(p_0,p^*_1)$ be as given in Lemma \ref{absorbLemma2}. As in Case 1, we get that, for every $\gamma \in \hat{T} \cap X$, there is $(\dot{s}_\gamma, \dot{q}_\gamma)$ such that $(p_0, p^*_1)$ forces that $(\dot{s}_\gamma, \dot{q}_\gamma) \in G(\dot{\bb{S}} * \dot{\bb{Q}})$ and $(p_0, (p^*_1, \dot{s}_\gamma, \dot{q}_\gamma))$ forces that $\gamma \in \dot{T}^*$. Moreover, we may assume that, for every such $\gamma$, there is $\dot{t}_\gamma$ such that $(p_0, p^*_1)$ forces $(\dot{s}_\gamma, \dot{q}_\gamma, \dot{t}_\gamma)$ is in the dense $\lambda_1$-directed closed subset of $\dot{\bb{S}}*\dot{\bb{Q}}*\dot{\bb{T}}$ and in $G(\dot{\bb{S}} * \dot{\bb{Q}} * \dot{\bb{T}})$. We can thus find names $\dot{s}'$ and $\dot{q}'$ such that, for all $\gamma \in \hat{T} \cap X$, $(p_0, (p^*_1, \dot{s}', \dot{q}')) \leq (p_0, (p^*_1, \dot{s}_\gamma, \dot{q}_\gamma))$. Since, when forcing with $\bb{P}^*$ below $(p_0, q_1)$, no cardinals between $X \cap \kappa^1_{n^*}$ and $(X \cap \kappa^1_{n^*})^{+\omega+1}$ are collapsed, another application of Claim \ref{unbddSubset} yields that $(p_0, (q_1, \dot{s}', \dot{q}'))$ forces that $\dot{T}$ reflects at $\sup(X)$.

	It remains to show that $AP_{\kappa^1_\omega}$ fails in $V^{\bb{P}_0 \times (\bb{P}_1 * \dot{\bb{S}} * \dot{\bb{Q}})}$. We will use an equivalent alternative formulation of approachability, due to Shelah.

	\begin{definition}
		Suppose $\kappa$ is a singular cardinal of countable cofinality, and let $d:[\kappa^+]^2 \rightarrow \omega$.
		\begin{enumerate}
			\item{$d$ is \emph{normal} if, for all $\beta < \kappa^+$ and all $n < \omega$, $|\{\alpha < \beta \mid d(\alpha,\beta) \leq n \}| < \kappa$.}
			\item{$d$ is \emph{subadditive} if, for all $\alpha < \beta < \gamma < \kappa^+$, $d(\alpha, \gamma) \leq \max(d(\alpha,\beta),d(\beta,\gamma))$.}
			\item{$S_0(d)$ is the set of $\gamma < \kappa^+$ such that, for some unbounded sets $A,B \subseteq \gamma$, for every $\beta \in B$, there is $n_\beta < \omega$ such that, for all $\alpha \in A \cap \beta$, $d(\alpha, \beta) \leq n_\beta$.}
		\end{enumerate}
	\end{definition}

	\begin{lemma}(Shelah)
		Suppose $\kappa$ is a singular, strong limit cardinal of countable cofinality.
		\begin{enumerate}
			\item{There is a normal, subadditive function $d:[\kappa^+]^2 \rightarrow \omega$.}
			\item{If $d$, $d' : [\kappa^+]^2 \rightarrow \omega$ are two normal, subadditive functions, then $S_0(d) \triangle S_0(d')$ is non-stationary.}
			\item{$AP_\kappa$ is equivalent to the existence of a normal, subadditive $d:[\kappa^+]^2 \rightarrow \omega$ such that $S_0(d)$ contains a club.}
		\end{enumerate}
	\end{lemma}

	In $V$, fix a normal, subadditive $d:[\lambda_1] \rightarrow \omega$. Note that $d$ remains normal and subadditive in $V^{\bb{P}_0 \times (\bb{P}_1 * \dot{\bb{S}} * \dot{\bb{Q}})}$. Let $(p_0, (p_1, \dot{s}, \dot{q})) \in \bb{P}_0 \times (\bb{P}_1 * \dot{\bb{S}} * \dot{\bb{Q}})$. Move to $V^{\bb{C}^1_{\ell(p_1)+1} * \dot{\bb{S}} * \dot{\bb{Q}} * \dot{\bb{T}}}$, requiring that, letting $c$ be the $C$-part of $p_1$, $(c, \dot{s}, \dot{q}) \in G(\bb{C}^1_{\ell(p_1)+1} * \dot{\bb{S}} * \dot{\bb{Q}})$. $\kappa^1_{\ell(p_1)}$ remains supercompact in $V^{\bb{C}^1_{\ell(p_1)+1} * \dot{\bb{S}} * \dot{\bb{Q}} * \dot{\bb{T}}}$, and a standard application of supercompactness yields that, if $A = \{\alpha < \kappa^1_{\ell(p_1)} \mid S^{\lambda_1}_{\alpha^{+\omega+1}} \setminus S_0(d)$ is stationary$\}$, then $A \in U^1_{\ell(p)}$. Note that, since this is true in $V^{\bb{C}^1_{\ell(p_1)+1} * \dot{\bb{S}} * \dot{\bb{Q}} * \dot{\bb{T}}}$, it must be true in $V$ as well. Moreover, by previous arguments, for any club $D$ in $\lambda_1$ in $V^{\bb{P}_0 \times (\bb{P}_1 * \dot{\bb{S}} * \dot{\bb{Q}})}$, there must be a club $C \subseteq D$ in $V^{\bb{C}^1_{\ell(p_1)+1} * \dot{\bb{S}} * \dot{\bb{Q}} * \dot{\bb{T}}}$. Putting this together, working in $V$, there are $U^1_{\ell(p_1)}$-many $\alpha < \lambda_1$ such that $(S^{\lambda_1}_{\alpha^{+\omega + 1}} \setminus S_0(d))^V$ is stationary in $V^{\bb{P}_0 \times (\bb{P}_1 * \dot{\bb{S}} * \dot{\bb{Q}})}$. Find $\alpha \in A \cap A^{p_1}_{\ell(p_1)}$, and find an extension $q_1 \leq p_1$ such that $\ell(q_1) = \ell(p_1) + 1$ and $\alpha^{q_1}_{\ell(p_1)} = \alpha$. It suffices to show that, forcing below $(p_0, (q_1, \dot{s}, \dot{q}))$, $(S^{\lambda_1}_{\alpha^{+\omega+1}} \setminus S_0(d))^{V^{\bb{P}_0 \times (\bb{P}_1 * \dot{\bb{S}} * \dot{\bb{Q}})}} = (S^{\lambda_1}_{\alpha^{+\omega+1}} \setminus S_0(d))^V$. To this end, let $\beta \in (S^{\lambda_1}_{\alpha^{+\omega + 1}} \cap S_0(d))^{V^{\bb{P}_0 \times (\bb{P}_1 * \dot{\bb{S}} * \dot{\bb{Q}})}}$. Let $A,B$ be unbounded in $\beta$ witnessing $\beta \in S_0(d)$. Since all cardinals in the interval $(\alpha, \alpha^{+\omega+2})$ are preserved by the forcing, Claim \ref{unbddSubset} yields unbounded $A' \subseteq A$ and $B' \subseteq B$ such that $A',B' \in V$. But then $A'$, $B'$ witness that $\beta \in S_0(d)$ in $V$. Thus, $(S^{\lambda_1}_{\alpha^{+\omega+1}} \setminus S_0(d))^{V^{\bb{P}_0 \times (\bb{P}_1 * \dot{\bb{S}} * \dot{\bb{Q}})}} = (S^{\lambda_1}_{\alpha^{+\omega+1}} \setminus S_0(d))^V$, so $AP_{\kappa^1_\omega}$ fails in $V^{\bb{P}_0 \times (\bb{P}_1 * \dot{\bb{S}} * \dot{\bb{Q}})}$.
\end{proof}

\section{Questions}

Many questions remain about the possible patterns of stationary reflection at the successor of a singular cardinal. We ask only a few of them here.

\begin{question}
	Is it consistent that $\mathrm{bRefl}(\aleph_{\omega^2 + 1})$ and $\neg AP_{\aleph_{\omega^2}}$ hold simultaneously?
\end{question}

\begin{question}
	Is it consistent that $\mathrm{Refl}(\aleph_{\omega^2+1})$ holds and, for every stationary $S \subseteq \aleph_{\omega^2 + 1}$, there is a stationary $T \subseteq S$ that does not reflect at arbitrarily high cofinalities?
\end{question}

\begin{question}
	Is it consistent that $\mathrm{Refl}(\aleph_{\omega \cdot 2 + 1})$ holds and there is a stationary $S \subseteq S^{\aleph_{\omega \cdot 2 + 1}}_\omega$ that does not reflect at any ordinal in $S^{\aleph_{\omega \cdot 2 + 1}}_{<\aleph_\omega}$?
\end{question}

\bibliography{reflection2ref}

\begin{thebibliography}{10}

\bibitem{chayut}
Y.~Chayut.
\newblock Stationary reflection and the approachability property.
\newblock Master's thesis, Hebrew University of Jerusalem, 2013.

\bibitem{cummings}
J.~Cummings.
\newblock Notes on singular cardinal combinatorics.
\newblock {\em Notre Dame Journal of Formal Logic}, 46(3):251--282, 2005.

\bibitem{reflection}
J.~Cummings and C.~Lambie-Hanson.
\newblock Bounded stationary reflection.
\newblock {\em Proceedings of the American Mathematical Society}, to appear.

\bibitem{ToddEmail}
T.~Eisworth.
\newblock Private communication.

\bibitem{eisworth}
Todd Eisworth.
\newblock Successors of singular cardinals.
\newblock In {\em Handbook of set theory}, pages 1229--1350. Springer, 2010.

\bibitem{fm}
M.~Foreman and M.~Magidor.
\newblock A very weak square principle.
\newblock {\em The Journal of Symbolic Logic}, 62(01):175--196, 1997.

\bibitem{jech}
T.~Jech.
\newblock Set theory, the third millennium, revised and expanded ed.
\newblock {\em Springer Monographs in Mathematics, Springer-Verlag, Berlin},
  2003.

\bibitem{laver}
R.~Laver.
\newblock Making the supercompactness of $\kappa$ indestructible under
  $\kappa$-directed closed forcing.
\newblock {\em Israel Journal of Mathematics}, 29(4):385--388, 1978.

\bibitem{magidor}
M.~Magidor.
\newblock Reflecting stationary sets.
\newblock {\em Journal of Symbolic Logic}, 47(4):755--771, 1982.

\bibitem{magidorshelah}
Menachem Magidor and Saharon Shelah.
\newblock When does almost free imply free?(for groups, transversals, etc.).
\newblock {\em Journal of the American Mathematical Society}, pages 769--830,
  1994.

\end{thebibliography}
\bibliographystyle{plain}

\end{document}